\documentclass{amsart}
\usepackage{amssymb, amsxtra, amsmath, amsthm, enumerate, enumitem, cite, bm}
\usepackage[a4paper,textwidth=352.2pt,textheight=547pt,centering]{geometry}
\AtBeginDocument{\setlength{\vsize}{\textheight}}

\author{Sanda Bujačić Babić}
\address{Faculty of Mathematics\\
University of Rijeka\\
Radmile Matejčić 2\\
51000 Rijeka\\
Croatia}
\email{sbujacic@math.uniri.hr}

\author{Ana Jurasić}
\address{Faculty of Mathematics\\
University of Rijeka\\
Radmile Matejčić 2\\
51000 Rijeka\\
Croatia}
\email{ajurasic@math.uniri.hr}

\keywords{Diophantine $m$-tuples, polynomials}

\subjclass{11D09, 11D45}

\thanks{This work is supported by the Croatian Science Foundation under the project number HRZZ IP-2022-10-5008 and by the European Union-NextGeneration EU under the project number uniri-iz-25-62-ALGEBRA}

\title{Regularity of Diophantine quadruples over $\mathbb{Q}(i)[X]$}

\theoremstyle{plain}
\newtheorem{thm}{Theorem}[section]
\newtheorem{lem}{Lemma}[section]

\newtheorem{cor}{Corollary}[section]

\theoremstyle{definition}
\newtheorem{definition}{Definition}[section]

\theoremstyle{remark}
\newtheorem{remark}{Remark}[section]
\newtheorem{ex}{Example}[section]

\begin{document}

\begin{abstract}
We prove that every Diophantine quadruple \(\{a,b,c,d\}\) over
\(\mathbb{Q}(i)[X]\) that contains at least one non-constant polynomial is regular;
that is, $$(a+b-c-d)^2=4(ab+1)(cd+1).$$  This result is consistent
with the corresponding results over \(\mathbb{Z}[i][X]\) and
\(\mathbb{R}[X]\), but contrasts with the situation over
\(\mathbb{C}[X]\).
\end{abstract}

\maketitle

\section{Introduction}
Diophantine $m$-tuples, $m \in \mathbb{N}$, are a classical topic in the field of number theory and go back to the work of Diophantus of Alexandria. He investigated sets of numbers in which each pair of different elements satisfies a certain algebraic condition. Over the centuries, the study of such sets has been generalized in many different ways. More formally, we call a set of $m$ distinct positive integers $\{a_1, a_2, \dots, a_m\}$ a Diophantine $m$-tuple with property $D(n)$, or simply a $D(n)$-$m$-tuple, for a non-zero integer $n$, if $a_i a_j +n$ is a perfect square for all $1\leq i < j \leq m$. 

The classical and best-known case introduced by Diophantus is the case in which $n = 1$. A $D(1)$-$m$-tuple is simply called the Diophantine $m$-tuple. Diophantus found such a set consisting of four different positive rational numbers, while the famous Diophantine quadruple $\{1, 3, 8, 120\}$ was found by Fermat and was the first such set of integers. Over the centuries, this study has undergone significant generalizations that include more comprehensive algebraic structures, varying constants and conditions for higher powers. To begin with, we present some basic definitions, remarks and results that we use in our work.

\begin{definition}
Let $m\geq 2$ and let $R$ be a commutative ring with unity. Let $n\in R$ be a non-zero element and $\{a_1,a_2,\dots,a_m\}$ a set of $m$ distinct non-zero elements in $R$ such that $a_ia_j+n$ is a square of an element in $R$ for $1\leq i<j\leq m$. The set $\{a_1,a_2,\dots,a_m\}$ is called a Diophantine $m$-tuple with the property $D(n)$ or simply a $D(n)$-$m$-tuple in $R$.
\end{definition}

\begin{remark}A $D(n)$-$m$-tuple in which the elements are not necessarily distinct and may include zero is referred to as an improper $D(n)$-$m$-tuple.
\end{remark}
\begin{remark}
 The case where $R$ is a polynomial ring and $n$ is a constant polynomial is usually called a polynomial $D(n)$-$m$-tuple. In such a variant of the problem it is usually assumed that not all polynomials in a $D(n)$-tuple are constant.    
\end{remark}

We study the polynomial $D(1)$-problem over the ring $\mathbb{Q}(i)[X]$. A polynomial generalization of this problem was first investigated by Jones for $n=1$ and the set of polynomials with integer coefficients \cite{jones1, jones2}. Afterwards, various polynomial generalizations of the original problem of Diophantus were also considered (for example \cite{dif2,dift,difw,dij,dl_17, jaialan, fij}).  

First, we introduce the definitions of regular Diophantine triples and Diophantine quadruples in $\mathbb{Q}(i)[X]$, which are analogous to those in other polynomial rings. Let $\{a,b,c\}$ be a Diophantine triple in $\mathbb{Q}(i)[X]$ such that 
\begin{equation}\label{jdbe_osnovne}
    ab+1=r^2,\ ac+1=s^2,\ bc+1=t^2,
\end{equation}
where $r,s,t\in \mathbb{Q}(i)[X]$.
In general, the problem of extending a Diophantine pair \( \{a, b\} \) to a Diophantine triple \( \{a, b, c\} \) is closely related to the solution of a certain Pellian equation. More precisely, for the pair \( \{a, b\} \), such that $ab+1=r^2$ holds for some element $r\in\mathbb{Q}(i)[X]$, from (\ref{jdbe_osnovne}) the equation
\[
bs^2 - at^2 = b - a
\]
can be formulated. Every extension of the pair $\{a,b\}$ to a Diophantine triple $\{a,b,c\}$ gives a polynomial solution $(s,t)$ of the Pellian equation. Conversely, a polynomial solution $(s,t)$ yields an extension whenever $c=\frac{s^2-1}{a}=\frac{t^2-1}{b} \in \mathbb{Q}(i)[X].$
Two distinguished candidates are obtained by setting $c=c_{\pm}=a+b\pm 2r,
\ r^2=ab+1.$ Whenever $c_{\pm}$ is nonzero and distinct from $a$ and $b$, it yields a regular extension of the Diophantine pair $\{a, b\}$ to a Diophantine triple $\{a,b,c_{\pm}\}$.

\begin{definition}\label{expc}
A Diophantine triple $\{a,b,c\}$ in $\mathbb{Q}(i)[X]$ is called regular if $(c-b-a)^2=4(ab+1).$
Equivalently,
\begin{align}
    c&=c_{\pm}=a+b\pm 2r,\label{jdba_c_regularni}\\
    ac_{\pm}+1 &=(a\pm r)^2,\ bc_{\pm}+1=(b\pm r)^2.\label{jdba_c_regularni1}
\end{align}
\end{definition}
\begin{ex}\label{exmp1.1}
The following is a regular Diophantine triple $\{a, b, c\}$ in $\mathbb{Q}(i)[X]$:
\[
\{a, b, c\}=\left\{
1,\ 
X^2 + (1+i)X + \frac{i}{2} - 1,\ 
X^2 + (3+i)X + \frac{3}{2}i+1
\right\}.
\]
\end{ex}

Besides regular extensions of a pair to a triple, there also exist Diophantine triples that are not regular. Similarly, every Diophantine triple $\{a,b,c\}$ gives rise to two explicit candidates $d_{+}$ and $d_{-}$ for a regular extension. Whenever one of these candidates is nonzero and distinct from $a, b$ and $c$, it yields a regular Diophantine quadruple.

\begin{definition}
A Diophantine quadruple $\{a,b,c,d\}$ in $\mathbb{Q}(i)[X]$ is called regular if $(a+b-c-d)^2=4(ab+1)(cd+1),$ 
or equivalently if 
\begin{equation}\label{jdba_d_pm}
    d=d_{\pm}=a+b+c+2(abc\pm rst), 
\end{equation}
where \(r,s,t\) are defined by \eqref{jdbe_osnovne}.
In this case, we have
\begin{align}\label{uvw}
&ad_{\pm}+1 = \left(rs \pm at\right)^2 = u_{\pm}^2, \ \ bd_{\pm}+1 =\left( rt \pm bs\right)^2=v_{\pm}^2,  \ \  cd_{\pm}+1 = \left(st \pm cr\right)^2=w_{\pm}^2,\nonumber
\end{align}
where $u_{\pm}, v_{\pm}, w_{\pm} \in \mathbb{Q}(i)[X]$.
\end{definition}

\begin{ex}
By applying formula \eqref{jdba_d_pm}, we construct the regular extension $\{a,b,c,d\}$ of the $D(1)$-triple $\{a, b, c\}$ introduced in Example \ref{exmp1.1}.  The resulting regular $D(1)$-quadruple in $\mathbb{Q}(i)[X]$ is given by
\[
\begin{aligned}
\bigg\{ 1, \ & X^2 + (1+i)X + \frac{i}{2} - 1, \ X^2 + (3+i)X + \frac{3}{2}i + 1, \\
& 4X^4 + (16+8i)X^3 + (12+24i)X^2 + (-8+16i)X - 5 \bigg\}.
\end{aligned}
\]
\end{ex}

\begin{remark}
Whenever the four polynomials
\begin{equation}\label{jdba_par_do_cetvorke}
\{a,b,a+b\pm 2r, 4r(r\pm a)(b\pm r) \} \end{equation}are nonzero and pairwise distinct, they form a regular Diophantine quadruple in $\mathbb{Q}(i)[X]$.
\end{remark}

\begin{definition}
An irregular Diophantine quadruple in $\mathbb{Q}(i)[X]$ is one that is not regular.
\end{definition}

Dujella and Luca proved in \cite[Lemma 1]{dl_17} that in an algebraically closed field \( K \) of characteristic zero at most one element of a polynomial Diophantine $m$-tuple can be a constant polynomial. An adapted important result for the Diophantine $m$-tuples in $\mathbb{C}[X]$ can be derived from \cite[Lemma 1]{dl_17}.

\begin{lem}\label{lema_samo_jedan_konstantan}
Let $\{a_1,a_2,\dots,a_m\}$ be a Diophantine $m$-tuple  in $\mathbb{C}[X]$, such that not all $a_i$'s are constant polynomials. Then $a_i\neq a_j$, for $i\neq j$, and at most one of the polynomials $a_i$, $i=1,\dots,m$, is constant.  
\end{lem}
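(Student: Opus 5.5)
We prove the two assertions separately. Both rest on one observation: if $f,g\in\mathbb{C}[X]$ satisfy $fg+1=h^2$, then $(h-1)(h+1)=fg$. Since $h-1$ and $h+1$ differ by the constant $2$, they are coprime in $\mathbb{C}[X]$.

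\emph{Distinctness.} Suppose $a_i=a_j=a$ for some $i\neq j$. Then $a^2+1=h^2$, so $(h-a)(h+a)=1$. Hence $h-a$ and $h+a$ are nonzero constants, and so is their difference $2a$. Thus $a_i$ is constant. Since not all elements are constant, there is a non-constant $a_k$ with $k\neq i,j$. For this $k$ we have $aa_k+1=\square$.

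This does not yet give a contradiction, so distinctness should instead be derived from the second assertion: it suffices to show that at most one element is constant. (If the tuple is meant in the proper sense, distinctness is already part of the definition. Either way, the real content is the claim about constants, and equal elements $a_i=a_j$ are a special case of two constant elements.)

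\emph{At most one constant.} Suppose $a_1=\alpha$ and $a_2=\beta$ are constants, and let $a_3=p$ be non-constant. Then $\alpha\beta+1=\gamma^2$ for some constant $\gamma$, together with
\[\alpha p+1=s^2,\qquad \beta p+1=t^2,\]
for some $s,t\in\mathbb{C}[X]$. Note that $\alpha,\beta\neq 0$; if zero were allowed, the pair condition would be trivially satisfied. Eliminating $p$ gives
\[\beta s^2-\alpha t^2=\beta-\alpha .\]
We split into two cases.

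\begin{enumerate}
\item[(i)] Case $\alpha\neq\beta$. Over $\mathbb{C}$ the left side factors as $(\sqrt\beta\, s-\sqrt\alpha\, t)(\sqrt\beta\, s+\sqrt\alpha\, t)$, and the right side is a nonzero constant. So both factors are constants, and therefore $s$ and $t$ are constant. Then $p=(s^2-1)/\alpha$ is constant, a contradiction.
\item[(ii)] Case $\alpha=\beta$. This is exactly the equal-elements situation. Here $\alpha^2+1$ is a square, and $s^2=t^2=\alpha p+1$, so we get no contradiction from this pair alone. We use that $m\geq 3$ and, if necessary, a fourth element, or we appeal to \cite[Lemma 1]{dl_17} directly. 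In the proper setting $\alpha\neq\beta$ holds by definition, so this case does not arise.
\end{enumerate}

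The main obstacle is case (ii), that is, how to handle equal or zero elements when the tuple is improper. The plan is to state the lemma for proper tuples, as in \cite[Lemma 1]{dl_17}, where distinctness follows from the definition. The substantive step is then the factorization argument in case (i), which is exactly the specialization of \cite[Lemma 1]{dl_17} to $K=\mathbb{C}$.
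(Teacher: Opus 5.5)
Your argument is correct under the paper's definition, where a Diophantine $m$-tuple consists of distinct nonzero elements. There, distinctness is automatic, and the real content is your case (i): factorizing $\beta s^2-\alpha t^2=\beta-\alpha$ over $\mathbb{C}$ forces $s$ and $t$ to be constant. This is essentially the argument behind \cite[Lemma 1]{dl_17}, which the paper cites instead of giving a proof; the paper uses the same factorization trick in its next lemma on pairs.

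Your hedge in case (ii) is unnecessary, and the ``fourth element'' idea could not work anyway, because the claim is false for improper tuples. For example, $\{1,1,X^2-1\}$ and $\{1,1,X^2-1,X^2+2X\}$ are improper $D(1)$-tuples in $\mathbb{C}[X]$ with a repeated constant. So restricting to proper tuples is the right decision, not just a convenience.
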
 

Hence, throughout the paper, we consider Diophantine tuples containing at least one non-constant polynomial; in such tuples, the elements are distinct and at most one of them is constant.

There are some useful consequences of Lemma~\ref{lema_samo_jedan_konstantan} for polynomials \(a\) and \(b\) of a Diophantine pair in \(\mathbb Q(i)[X]\). Suppose that \(ab=y^2\), where \(y\in\mathbb Q(i)[X]\). Together with \(ab+1=r^2\), this leads to
$(r-y)(r+y)=1.$ Hence both \(r-y\) and \(r+y\) are constant polynomials, and so \(r\) and \(y\) are constant. Therefore \(ab=y^2\) is constant, which implies that both \(a\) and \(b\) are constant, contrary to the assumption that at least one of them is non-constant. Also, if $b=c_1 a$, for some $c_1 \in \mathbb{Q}(i)\backslash \{0\}$, then from \( ab + 1 = r^2 \) we would have \begin{equation}
a^2 c_1 + 1 = r^2.\label{a_i_c_1}
\end{equation}  In $\mathbb{C}[X]$ we get a factorization of the form $1 = (r-a\sqrt{c_1})(r+a\sqrt{c_1}).$ Both factors must be constant polynomials. Hence, by subtracting these two constant factors, \(a\) is constant. Since \(b=c_1a\), \(b\) is constant as well, contrary to the assumption that at least one of \(a\) and \(b\) is non-constant. We therefore have the following lemma.
\begin{lem}
Let $\{a,b\}$ be a Diophantine pair in $\mathbb{Q}(i)[X]$, and assume that at least one of $a$ and $b$ is non-constant. Then
\begin{itemize}
    \item $ab$ cannot be a perfect square;
    \item there is no constant \( c_1 \in \mathbb{Q}(i)\backslash \{0\} \) such that \( b = c_1 a \).
\end{itemize} 
\end{lem}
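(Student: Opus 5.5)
The two assertions are proved separately, each by contradiction. In both we pass from $\mathbb{Q}(i)[X]$ to $\mathbb{C}[X]$, where a product of two polynomials equal to a non-zero constant forces both factors to be constant. Throughout, $ab+1=r^2$ with $r\in\mathbb{Q}(i)[X]$, and at least one of $a,b$ is non-constant. By Lemma~\ref{lema_samo_jedan_konstantan}, applied to the pair viewed in $\mathbb{C}[X]$, at most one of them is constant, although we will only need that not both are.

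\emph{First assertion.} Suppose $ab=y^2$ for some $y\in\mathbb{Q}(i)[X]$. Subtracting this from $ab+1=r^2$ gives $(r-y)(r+y)=1$. The product of the degrees is then zero, so both $r-y$ and $r+y$ are non-zero constants. Taking half their sum and half their difference, $r$ and $y$ are constants. Hence $ab=y^2$ is a constant. Since $a,b\neq 0$, $\deg a+\deg b=0$, so both $a$ and $b$ are constant, which is a contradiction.

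\emph{Second assertion.} Suppose $b=c_1a$ with $c_1\in\mathbb{Q}(i)\setminus\{0\}$. Then $c_1a^2+1=r^2$. Choose $\gamma\in\mathbb{C}$ with $\gamma^2=c_1$; we have $\gamma\neq 0$. In $\mathbb{C}[X]$ this gives $(r-\gamma a)(r+\gamma a)=1$, so again both factors are constants. Subtracting them shows that $2\gamma a$ is constant, and since $\gamma\neq0$, $a$ is constant. Then $b=c_1a$ is constant as well, which is a contradiction.

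The only point needing care is the passage to $\mathbb{C}[X]$ in the second part, since $\sqrt{c_1}$ need not lie in $\mathbb{Q}(i)$. This is harmless: "being constant" does not depend on the coefficient field, and $\mathbb{C}[X]$ is still an integral domain, so the degree argument applies there. No further obstacles are expected.
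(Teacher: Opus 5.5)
Your proof is correct and follows the paper's argument essentially verbatim: $(r-y)(r+y)=1$ forces $r$, $y$ and hence $ab$ to be constant, and $(r-\sqrt{c_1}\,a)(r+\sqrt{c_1}\,a)=1$ in $\mathbb{C}[X]$ forces $a$, and therefore $b=c_1a$, to be constant. One wording slip: it is the \emph{sum} of the degrees of $r-y$ and $r+y$ (that is, the degree of their product) that equals zero, and this is what makes both factors constant.
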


As in the integer case, the problems with $n=1$ and $n=4$ are closely connected. For every $m\geq2$, a set $\{a_1,\ldots,a_m\}$ is a $D(4)$-$m$-tuple in $\mathbb{C}[X]$ (or in $\mathbb{Q}(i)[X]$) if and only if $\left\{\frac{a_1}{2},\ldots,\frac{a_m}{2}\right\}$
is a $D(1)$-$m$-tuple in the same ring. Moreover, this correspondence preserves regularity.
The results obtained in this paper for \(D(1)\)-quadruples in
\(\mathbb Q(i)[X]\) therefore also apply to \(D(4)\)-quadruples in
\(\mathbb Q(i)[X]\). Hence, our current work is motivated both by our earlier research on \(D(1)\)-quadruples and on \(D(4)\)-quadruples over various polynomial rings. 

From \cite{dij}, we know that there are at most $7$ elements in a Diophantine $m$-tuple in $\mathbb{Q}(i)[X]$, but we can also transform the result from \cite[Lemma 5]{dij}:

\begin{lem}\label{lema_supanj_delta}
Let $\{a,b,c,d\}$ be a Diophantine quadruple in $\mathbb{Q}(i)[X]$. Denote by $\alpha$, $\beta$, $\gamma$ and $\delta$ the degrees of $a$, $b$, $c$, $d$, respectively, and assume that $\alpha\leq\beta\leq\gamma\leq\delta$. Then
either $\delta\geq\frac{3\beta+5\gamma}{2}$ or $d = d_+$.\end{lem}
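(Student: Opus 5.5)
The plan is to deduce the statement directly from \cite[Lemma 5]{dij} by embedding the ring, rather than reproving it from scratch. Since $\mathbb{Q}(i)\subset\mathbb{C}$, every Diophantine quadruple $\{a,b,c,d\}$ in $\mathbb{Q}(i)[X]$ is also a Diophantine quadruple in $\mathbb{C}[X]$. The embedding does not change degrees, so $\alpha\le\beta\le\gamma\le\delta$ still hold. Distinctness and the condition that at least one element is non-constant are also unaffected, and Lemma~\ref{lema_samo_jedan_konstantan} applies in $\mathbb{C}[X]$ as before. The degree alternative in \cite[Lemma 5]{dij} is formulated for quadruples in $\mathbb{C}[X]$ (more generally, over an algebraically closed field of characteristic zero). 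It therefore gives: either $\delta\ge\frac{3\beta+5\gamma}{2}$, or $d=a+b+c+2abc+2\tilde r\tilde s\tilde t$ for some $\tilde r,\tilde s,\tilde t\in\mathbb{C}[X]$ with $\tilde r^2=ab+1$, $\tilde s^2=ac+1$, $\tilde t^2=bc+1$.

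The second step is to transport the conclusion back to $\mathbb{Q}(i)[X]$. By \eqref{jdbe_osnovne}, the polynomial $ab+1$ already equals $r^2$ with $r\in\mathbb{Q}(i)[X]$. Since $\mathbb{C}[X]$ is an integral domain, the only square roots of $r^2$ there are $\pm r$, so $\tilde r=\pm r$, and similarly $\tilde s=\pm s$, $\tilde t=\pm t$. Hence $\tilde r\tilde s\tilde t=\pm rst$. By fixing the signs of $r,s,t$ in \eqref{jdbe_osnovne} appropriately, which the paper's notation $d_+$ implicitly allows, we obtain $d=d_+$ as in \eqref{jdba_d_pm}, an element of $\mathbb{Q}(i)[X]$.

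If \cite[Lemma 5]{dij} is not literally stated in this form, I would instead rerun its argument, which is the step I expect to be the main obstacle. The argument works with the system $az^2-cx^2=a-c$, $bz^2-cy^2=b-c$ coming from $cd+1=z^2$. First, reduce $(z,x)$ and $(z,y)$ by descent to fundamental solutions, using the Pellian structure from the pair $\{a,c\}$ with unit $s+\sqrt{ac}$. This yields $z=v_m=w_n$ for recursively defined polynomial sequences. Second, compare degrees. Show that the indices $m=n\le 1$ (trivial or small cases) force $d=0$ or $d=d_\pm$. Any larger index makes $\deg z\ge$ roughly $\frac{3\beta+5\gamma}{4}+\frac{\gamma}{2}\cdot$ (correction), which translates into $\delta\ge\frac{3\beta+5\gamma}{2}$ via $\delta=2\deg z-\gamma$. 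The delicate point is the initial-term congruence analysis, i.e.\ identifying the possible fundamental solutions $z_0$ via degree bounds such as $\deg z_0\le\frac{\alpha+\gamma}{2}$ and the gap-type inequalities. This is exactly the part that the citation to \cite{dij} is meant to avoid.
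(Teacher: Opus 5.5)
Your proposal is correct and follows the paper's route: the paper gives no separate argument but obtains the lemma by viewing the quadruple in $\mathbb{C}[X]$ and applying \cite[Lemma 5]{dij}, and your observation that $\tilde r,\tilde s,\tilde t$ coincide with $r,s,t$ up to sign is exactly what makes this transfer legitimate. Instead of the loose ``fixing the signs appropriately'', you can simply note that $d=d_-$ is impossible: either $d_-=0\neq d$, or $\deg(d_-)\le\gamma-\alpha-\beta<\gamma\le\delta$ because $\beta>0$, so the only remaining option is $d=d_+$.
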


Filipin and Fujita \cite{ff} proved that every polynomial $D(4)$-quadruple in $\mathbb{Z}[X]$ is regular. Bliznac Trebje\v{s}anin and Buja\v{c}i\'{c} Babi\'{c} \cite{blizbuj} proved that every $D(4)$-quadruple in $\mathbb{Z}[i][X]$ is regular, where \(\mathbb{Z}[i]\) denotes the Gaussian integers. The same assertion is proved by Filipin and Jurasi\'{c} \cite{glavni} for $D(1)$-quadruples in $\mathbb{Z}[i][X]$. A similar goal is pursued in the present work, in which we investigate whether every Diophantine quadruple in $\mathbb{Q}(i)[X]$ containing at least one non-constant polynomial is regular and, consequently, whether the same holds for D(4)-quadruples in $\mathbb{Q}(i)[X]$. Since Filipin and Jurasi\'{c} \cite{fij} proved that every $D(1)$-quadruple in $\mathbb{R}[X]$  containing at least one non-constant polynomial is regular, it follows that every $D(4)$-quadruple in $\mathbb{R}[X]$ containing at least one non-constant polynomial is also regular. On the other hand, Dujella and Jurasi\'{c} \cite{dij} found irregular $D(1)$-quadruples in $\mathbb{C}[X]$, so there also exist irregular $D(4)$-quadruples in $\mathbb{C}[X]$. 
Thus, the case \(\mathbb Q(i)[X]\) complements the known results over \(\mathbb R[X]\), \(\mathbb C[X]\), and \(\mathbb Z[i][X]\).

In order to determine whether every Diophantine quadruple and, consequently, every $D(4)$-quadruple in $\mathbb{Q}(i)[X]$ is regular, certain techniques developed in \cite{blizbuj, dij, dl_17, glavni} serve as the basis for our analysis, but additional methods and refined strategies were required to capture the full range of cases occurring in this setting. As we will explain later, the structure of the field \(\mathbb Q(i)\) gives rise to additional cases that do not occur in \(\mathbb R[X]\) or \(\mathbb Z[i][X]\). As usual, we begin by analyzing the associated system of simultaneous Pellian equations and then determine the intersection of the resulting binary recursive sequences. We then apply congruence relations and the gap principle to arrive at the main result:

\begin{thm}\label{glavni-tm}Every Diophantine quadruple in $\mathbb{Q}(i)[X]$ containing at least one non-constant polynomial is regular.\end{thm}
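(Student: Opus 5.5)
We would argue by contradiction. Suppose $\{a,b,c,d\}$ is an irregular Diophantine quadruple in $\mathbb{Q}(i)[X]$ containing a non-constant polynomial. Order the elements so that $\alpha\le\beta\le\gamma\le\delta$. By Lemma~\ref{lema_samo_jedan_konstantan}, at most one element is constant, so $\beta\ge1$. By Lemma~\ref{lema_supanj_delta}, since $d\ne d_+$, we have
\[
\delta\ \ge\ \frac{3\beta+5\gamma}{2}.
\]
The strategy is the one of \cite{dij,glavni}. We would first reduce to a minimal counterexample. Among all irregular quadruples we would choose one minimising a suitable size, say $\delta$ or $\alpha+\beta+\gamma+\delta$. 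Given an irregular $\{a,b,c,d\}$, we would apply the standard descent: the quadruple $\{a,b,c,\tilde d\}$ with $\tilde d$ the other root of the quadratic (in $d$) condition $(a+b+c+d-2abcd... )$ — equivalently, descend by the fundamental-solution step in the associated Pellian system. This should produce either a regular configuration or a smaller irregular quadruple. That would allow us to assume the quadruple is generated from a fundamental solution of small degree.

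\textbf{Pellian setup.} Write $ad+1=x^2$, $bd+1=y^2$ and $cd+1=z^2$. This gives the simultaneous Pellian equations
\[
az^2-cx^2=a-c,\qquad bz^2-cy^2=b-c.
\]
Working in $\mathbb{Q}(i)[X][\sqrt{ac}]$, whose fundamental unit is $r_{ac}+\sqrt{ac}$ with $s^2=ac+1$, we would describe all solutions as
\[
z\sqrt a+x\sqrt c=(z_0\sqrt a+x_0\sqrt c)\,(s+\sqrt{ac})^m .
\]
Here $(z_0,x_0)$ ranges over finitely many fundamental solutions. A degree bound on $z_0,x_0$ would be proved by the usual argument that any solution can be reduced via the unit until $\deg z_0\le(\gamma-\alpha)/4+\dots$; this step uses only degree (norm) estimates, so it transfers from $\mathbb{C}[X]$. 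In the same way we obtain sequences $v_m$ and $w_n$, and $z=v_m=w_n$.

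\textbf{Congruences and the gap principle.} Next we would derive congruences for $v_m$ modulo $c$ (and for $w_n$). They should take the form $v_{2m}\equiv z_0+\tfrac12 c(az_0m^2+sx_0m)\pmod{c^2}$, as in \cite{dij}, and these force $z_0=\pm1$ or a small list of exceptions. With $z_0=\pm1$, comparing degrees gives $m$ and $n$ small. Combined with $\delta\ge(3\beta+5\gamma)/2$, this leaves only a few pairs $(m,n)$, namely $m,n\le 2$ or $3$. Each such case gives $d$ explicitly as a rational function of $a,b,c,r,s,t$, and we would check that it is either $d_\pm$, contradicting irregularity, or is not a polynomial.

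\textbf{Main obstacle.} The essential difference from $\mathbb{C}[X]$ is that the irregular examples of \cite{dij} live in exactly these exceptional cases. There the fundamental solutions or leading coefficients involve square roots of constants such as $\sqrt{ac}$'s leading coefficient, or $\sqrt{-3}$-type constants, that need not lie in $\mathbb{Q}(i)$. So the hardest step will be the final case analysis. For each exceptional $(z_0,x_0,m,n)$ we would parametrize the candidate quadruples over $\mathbb{C}[X]$, as \cite{dij} does, and show that the resulting constraints on leading coefficients require an element like $\sqrt{2}$, $\sqrt{3}$ or $\sqrt{-3}$, or more generally a root of an irreducible quadratic over $\mathbb{Q}(i)$, to lie in $\mathbb{Q}(i)$, which is impossible. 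We would first try comparing the two top coefficients of the identity $v_m=w_n$ in these cases. We expect additional subcases where $a$ is constant with $a\notin(\mathbb{Q}(i))^2$ to need separate handling, using the second part of the lemma on Diophantine pairs ($ab$ not a square).
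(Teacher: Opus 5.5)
There is a genuine gap at the decisive step. You claim that comparing degrees, together with $\delta\ge\frac{3\beta+5\gamma}{2}$, leaves only $m,n\le 2$ or $3$, after which finitely many explicit $d$'s are checked. Both ingredients point the other way:

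\begin{itemize}
\item By Lemma~\ref{lema_stupnjevi}, $\deg v_m$ grows linearly in $m$, and $\delta=2\deg z-\gamma$. So a lower bound on $\delta$ is a \emph{lower} bound on the indices. This is exactly how the paper's gap principle (Lemma~\ref{gap0}, which uses minimality of $\delta$) gives $m,n\ge3$ for an irregular $d'$.
\item The equality $\deg v_m=\deg w_n$ only links $m$ and $n$ (Lemma~\ref{degineq}: $n-1\le m\le 2n+1$).
\end{itemize}

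Nothing in your plan bounds the indices from above. The small-index computations of \cite{dij} therefore only treat the complement of what must be proved. Separately, your ``descent'' via the other root of the quadratic in $d$ does not make sense. The quadratic $(a+b-c-d)^2=4(ab+1)(cd+1)$ has roots exactly $d_\pm$, so an irregular $d$ is not a root of it. One simply takes $\delta$ minimal.

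The missing idea is how the congruences modulo $c^2$ exclude \emph{all} large indices at once. The correct $D(1)$ form is $v_{2m}\equiv z_0+2c(az_0m^2+sx_0m)\pmod{c^2}$; your factor $\frac12$ belongs to the $D(4)$ version. Take this and its companions in Lemma~\ref{Lm8}, divide by $c$, and reduce using $s^2\equiv t^2\equiv1$ and $\mp2rst\equiv a+b-d_-\pmod c$. Because $m,n$ enter only as scalar coefficients, both sides have degree $<\gamma$, or the difference is a constant multiple of $c$. The congruence is therefore an honest polynomial identity for every $m,n$.

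Comparing coefficients, using the exact shape of $d_-$ from Lemmas~\ref{lemma-d_-}, \ref{Lm-ubaceno_novo} and \ref{lemma-3.7-param}, gives Diophantine conditions on $m,n$. You must show these have no solutions with $m,n\ge3$. This has to be done for every admissible pair of initial terms $(z_0,z_1)$ built from $\pm1,\pm s,\pm t,\pm(cr\pm st)$ in Lemma~\ref{fund_cases}, not just $z_0=\pm1$ plus a few exceptions.

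The $\mathbb{Q}(i)$-specific configurations are those where $a^2+1$ is a square, i.e.\ $a=\frac12(f-\frac1f)$, not those where $a$ is a non-square. There the arithmetic of $\mathbb{Q}(i)$ is applied to infinite families. For example:
\begin{itemize}
\item one is forced to need $16n-5$ or $8m+3$ to be a perfect square;
\item one must rule out $f=\frac{(2n-1)^2}{2n}$ by comparing leading coefficients in $v_{2m+1}=w_{2n}$ and reducing modulo $2n-1$.
\end{itemize}
Showing that a single fixed constant such as $\sqrt{-3}$ lies outside $\mathbb{Q}(i)$ cannot replace this analysis.
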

\begin{cor}\label{glavni-cor}Every $D(4)$-quadruple in $\mathbb{Q}(i)[X]$ containing at least one non-constant polynomial is regular.\end{cor}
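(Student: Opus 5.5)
We follow the standard strategy for polynomial Diophantine quadruples (as in \cite{dij, glavni, fij}). The goal is to show that an irregular quadruple cannot exist in $\mathbb{Q}(i)[X]$. So assume, for contradiction, that $\{a,b,c,d\}$ is a Diophantine quadruple in $\mathbb{Q}(i)[X]$ with at least one non-constant element that is not regular. Order the elements so that $\alpha\le\beta\le\gamma\le\delta$, and among all such counterexamples choose one with $\delta$ minimal. By Lemma~\ref{lema_supanj_delta}, either $d=d_+$, which is already regular, or $\delta\ge \frac{3\beta+5\gamma}{2}$. So we may assume $d$ is large compared with $b$ and $c$.

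The triple $\{a,b,c\}$ is then extended by $d$ through the system
\[
ad+1=x^2,\quad bd+1=y^2,\quad cd+1=z^2 .
\]
Eliminating $d$ gives the simultaneous Pellian equations
\[
az^2-cx^2=a-c,\qquad bz^2-cy^2=b-c .
\]
We describe all solutions $z$ of each equation by binary recurrences of the form
\[
z=v_m=z_0\,\frac{(s+\sqrt{ac})^m+\ldots}{\ldots}, \qquad z=w_n \ \text{(from the second equation)}.
\]
The fundamental solutions $(z_0,x_0)$ and $(z_1,y_1)$ range over finitely many classes. Their degrees are bounded by the usual descent argument: from a solution one builds a smaller element $d'=\frac{(\ldots)}{\ldots}$, and a non-trivial descent would contradict the minimality of $\delta$ unless $d'$ is $0$ or constant.

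Working in $\mathbb{Q}(i)[X]$ rather than $\mathbb{C}[X]$ matters here. Irregular quadruples exist over $\mathbb{C}$, and in the known examples the relevant discriminants or leading-coefficient ratios are non-squares in $\mathbb{Q}(i)$. So we must carry the arithmetic of leading coefficients through the argument. Equations such as $c_1a^2+1=r^2$, and comparisons of leading coefficients in $ab+1=r^2$, force quantities like $\mathrm{lc}(a)\mathrm{lc}(b)$ to be squares in $\mathbb{Q}(i)$. We would record these square conditions as an extra lemma and use them to prune the fundamental-solution cases.

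Next we compare degrees in $v_m=w_n$. This reduces the problem to small indices, with $m\equiv n \pmod 2$ and $|m-n|\le1$. We then use congruences modulo $c$ (or modulo $c^2$) of the form
\[
v_{2m}\equiv z_0+c(\ldots)m^2 \pmod{c^2}.
\]
Combined with the gap principle $\delta\ge\frac{3\beta+5\gamma}{2}$, these congruences force $m$ and $n$ to be at most $2$. In each surviving small case, a direct computation gives $z_0=\pm1$, $z_1=\pm1$ and
\[
d=a+b+c+2(abc\pm rst),
\]
that is, $d=d_\pm$. This means the quadruple is regular, contradicting our assumption. Whenever $d_-$ appears, it has smaller degree, and the minimality of $\delta$ handles it.

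The main obstacle is the case analysis of the fundamental solutions. Over $\mathbb{C}[X]$, non-trivial fundamental solutions with $z_0\ne\pm1$ genuinely occur. The key step is therefore to show that, in each such configuration, some leading-coefficient or constant-term condition requires a square root that does not lie in $\mathbb{Q}(i)$. For example, one such condition is $\sqrt{\mathrm{lc}(a)\mathrm{lc}(c)}$ combined with a factor $\sqrt{3}$ or $\sqrt{-3}$ from the explicit irregular families. I would first compute the irregular examples of \cite{dij} to identify exactly which irrational constants they need, and then prove that any irregular configuration forces such a constant. The $D(4)$ statement, Corollary~\ref{glavni-cor}, then follows at once from the scaling correspondence $a_i\mapsto a_i/2$, which preserves regularity.
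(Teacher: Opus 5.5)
Your closing sentence is, on its own, the paper's proof of Corollary~\ref{glavni-cor}. Theorem~\ref{glavni-tm} is proved for $D(1)$-quadruples, and the $D(4)$ case follows from $a_i\mapsto a_i/2$.

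The problem is that you never invoke Theorem~\ref{glavni-tm}. You make the corollary rest on your own sketch of the $D(1)$ statement, and that sketch has genuine gaps.

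\emph{First}, comparing degrees in $v_m=w_n$ gives only $n-1\le m\le 2n+1$ (Lemma~\ref{degineq}). There is no parity restriction and no bound $|m-n|\le 1$. All four parity classes of Lemma~\ref{fund_cases} occur, and when $\alpha=0$, $\beta=\gamma$, equalities such as $v_{4n-1}=w_{2n}$ or $v_{4n+1}=w_{2n+1}$ pass the degree test and must be excluded by other means.

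\emph{Second}, the gap principle works in the opposite direction from your sketch. Minimality of $\delta$ together with Lemma~\ref{gap0} shows that an irregular $d'$ requires $m,n\ge 3$. The whole difficulty is then to exclude those indices case by case, using the congruences of Lemmas~\ref{Lm8} and~\ref{Lm-dodano} and leading-coefficient comparisons.

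\emph{Third}, it is false that the surviving small-index cases have $z_0=z_1=\pm1$. The fundamental solutions include $\pm s$, $\pm t$ and $\pm(cr\pm st)$, and $d_\pm$ arise from several of them (Lemma~\ref{od-}).

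\emph{Finally}, the step where $\mathbb{Q}(i)$ actually matters is only announced as a plan. The second half of that plan, ``prove that any irregular configuration forces such a constant,'' is the theorem itself. Knowing particular irregular examples over $\mathbb{C}[X]$ does not reduce it. The paper instead runs the full case analysis over $\mathbb{Q}(i)$. The arithmetic of $\mathbb{Q}(i)$ enters through concrete obstructions: $16n-5$, $8m+3$ or $4n-1$ fail to be rational squares, and $(p^2-p-1)^{p-1}(p^2+p+1)^{p}=p^{4p-2}$ is impossible modulo $p$.

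The fix is to discard everything before your last sentence, cite Theorem~\ref{glavni-tm}, and make the scaling step explicit:
\begin{itemize}
\item Since $2$ is a unit in $\mathbb{Q}(i)$, the map $a_i\mapsto a_i/2$ stays inside $\mathbb{Q}(i)[X]$ and preserves distinctness, non-vanishing and non-constancy.
\item $a_ia_j+4=\rho^2$ holds if and only if $\frac{a_i}{2}\frac{a_j}{2}+1=(\rho/2)^2$.
\item The $D(1)$ regularity identity for $\{a/2,b/2,c/2,d/2\}$ is equivalent to $(a+b-c-d)^2=(ab+4)(cd+4)$, which is the $D(4)$ notion of regularity.
\end{itemize}
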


\section{General properties and Pellian equations} 
\label{general_pell}

Let \( \{a, b, c\} \) be a Diophantine triple for which equations (\ref{jdbe_osnovne}) hold. If such a triple is extended to a Diophantine quadruple \( \{a, b, c, d\} \) in $\mathbb{Q}(i)[X]$, with a non-zero polynomial $d$ such that $d\neq a, b, c,$ then there exist \( x, y, z \in \mathbb{Q}(i)[X] \) satisfying
\begin{equation}\label{jdb_jednakosti_za_d}
ad+1=x^2,\ bd+1=y^2,\ cd+1=z^2.
\end{equation}

Eliminating $d$ from (\ref{jdb_jednakosti_za_d}), we obtain the system of simultaneous Pellian equations
\begin{align}
    az^2-cx^2&=a-c,\label{jdba_pellova_prva}\\
    bz^2-cy^2&=b-c.\label{jdba_pellova_druga}
\end{align}
To compare the solutions of the system (\ref{jdba_pellova_prva})-(\ref{jdba_pellova_druga}) within the polynomial ring $\mathbb{Q}(i)[X]$, we use the ordering induced by the degrees of the polynomials. 
We denote by \( \alpha, \beta, \gamma, \delta \) the degrees of the polynomials \( a, b, c, d\), respectively. Without loss of generality, after relabelling if necessary, we may assume \( 0 \leq \alpha \leq \beta \leq \gamma \leq \delta \) and \( \beta, \gamma, \delta > 0 \). Lemma~\ref{lema_samo_jedan_konstantan} ensures that at most one of these
polynomials can be constant; if such a polynomial exists, we denote it by \(a\).

When dealing with equations of type (\ref{jdba_pellova_prva}) and (\ref{jdba_pellova_druga}), we first try to describe the set of all possible solutions $(z,x)$ and $(z,y)$, respectively. Among all the solutions, we are interested in those for which equations (\ref{jdb_jednakosti_za_d}) hold. Lemmas \ref{prvi_deg} and \ref{lema_rj_pellove_jedn} are adapted formulations of \cite[Lemma 4]{dl_17} and \cite[Lemma 2.1]{fij}, respectively. While originally stated in a different setting, their structure and conclusions remain applicable in the present context with appropriate adjustments.

\begin{lem}\label{prvi_deg}
Let $(z,x)$ and $(z,y)$ be solutions of (\ref{jdba_pellova_prva}) and (\ref{jdba_pellova_druga}), respectively, with $x,y,z\in \mathbb{Q}(i)[X]$. Then, there exist solutions $(z_0,x_0)$ and $(z_1,y_1)$, $z_0,z_1,x_0,y_1\in\mathbb{Q}(i)[X]$, of (\ref{jdba_pellova_prva}) and (\ref{jdba_pellova_druga}), respectively, such that
\begin{equation}\label{deg1}    \deg(z_0)\leq \frac{3\gamma-\alpha}{4},   \quad \deg(x_0)\leq \frac{\alpha+\gamma}{4},
\end{equation}
\begin{equation}\label{ineq20}
    \deg(z_1)\leq \frac{3\gamma-\beta}{4}, \quad \deg(y_1)\leq \frac{\beta+\gamma}{4}. 
\end{equation}
\end{lem}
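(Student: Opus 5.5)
We prove this by a descent argument on the solution $(z,x)$ of \eqref{jdba_pellova_prva}, the classical reduction used in \cite[Lemma 4]{dl_17}. The statement for \eqref{jdba_pellova_druga} follows by replacing $a,x,\alpha$ with $b,y,\beta$ throughout, so we only treat $(z,x)$.

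\textbf{Setup.} Write \eqref{jdba_pellova_prva} in the quadratic ring as
\[
(z\sqrt{a}+x\sqrt{c})(z\sqrt{a}-x\sqrt{c}) = a-c .
\]
The unit $s+\sqrt{ac}$ has norm $s^2-ac=1$ by \eqref{jdbe_osnovne}. For a solution $(z,x)$, define a new pair $(z',x')$ by
\[
z'\sqrt{a}+x'\sqrt{c}=(z\sqrt{a}+x\sqrt{c})(s-\sqrt{ac}),
\]
that is,
\[
z'=sz-cx, \qquad x'=sx-az .
\]
Both have coefficients in $\mathbb{Q}(i)[X]$, and $(z',x')$ again satisfies \eqref{jdba_pellova_prva}. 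We choose the sign of the conjugation ($s\pm\sqrt{ac}$, or equivalently replacing $x$ by $-x$) so that the leading terms cancel. If $\deg(z)$ exceeds the bound in \eqref{deg1}, the aim is to show $\deg(z')<\deg(z)$. Since degrees are nonnegative integers, iterating this step then terminates at a solution $(z_0,x_0)$ with small degrees.

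\textbf{Degree bookkeeping.} From \eqref{jdba_pellova_prva}, whenever $\deg z$ is large the two leading terms $az^2$ and $cx^2$ must cancel. Hence $\alpha+2\deg z=\gamma+2\deg x$, so $\deg x=\deg z+\frac{\alpha-\gamma}{2}$. Also $\deg s=\frac{\alpha+\gamma}{2}$. The key identity is
\[
z'z=(sz-cx)z=\frac{s(az^2)-acxz}{a},
\]
and, more directly,
\[
(sz-cx)(sz+cx)=s^2z^2-c^2x^2=z^2+c(az^2-cx^2)=z^2+c(a-c).
\]
Choosing the sign so that $\deg(sz+cx)=\deg s+\deg z$, we get
\[
\deg z'=\deg\bigl(z^2+c(a-c)\bigr)-\deg s-\deg z .
\]
If $2\deg z>2\gamma$, this equals $\deg z-\frac{\alpha+\gamma}{2}<\deg z$ (assuming $\alpha+\gamma>0$, which holds since $\gamma>0$). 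Otherwise it is at most $2\gamma-\frac{\alpha+\gamma}{2}-\deg z$, which is smaller than $\deg z$ precisely when $\deg z>\frac{3\gamma-\alpha}{4}$.

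\textbf{Conclusion of the descent.} Whenever $\deg z>\frac{3\gamma-\alpha}{4}$, the degree strictly drops, so descent ends with $\deg z_0\le\frac{3\gamma-\alpha}{4}$. Then $cx_0^2=az_0^2-(a-c)$ has degree at most $\max\bigl(\alpha+2\deg z_0,\gamma\bigr)$. This gives
\[
2\deg x_0\le \max\Bigl(\alpha+\tfrac{3\gamma-\alpha}{2}-\gamma,\;0\Bigr)=\frac{\alpha+\gamma}{2},
\]
that is, $\deg x_0\le\frac{\alpha+\gamma}{4}$.

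\textbf{Main obstacle.} The delicate point is guaranteeing that one of the two sign choices yields $\deg(sz\pm cx)=\deg s+\deg z$. Both choices lose degree only if $sz$ and $cx$ vanish in top degree, which is impossible since their sum and difference cannot both cancel unless $sz=0$. We also need the cancellation estimate to hold over $\mathbb{Q}(i)$, not only over $\mathbb{C}$. This is automatic, because every operation above is rational in $a,c,s,x,z$ and so stays in $\mathbb{Q}(i)[X]$.
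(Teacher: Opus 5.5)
Your descent argument is correct and is essentially the argument the paper relies on: it omits the proof and cites \cite[Lemma 4]{dl_17}, where one takes a solution of minimal $\deg z_0$ in its class and bounds it via $(sz_0-cx_0)(sz_0+cx_0)=z_0^2+c(a-c)$, exactly as you do. One point to state explicitly is that $\deg z>\frac{3\gamma-\alpha}{4}$ already gives $\alpha+2\deg z>\frac{3\gamma+\alpha}{2}\geq\gamma$; hence the leading terms of $az^2$ and $cx^2$ cancel, $\deg(cx)=\deg(sz)$, and a sign with $\deg(sz\pm cx)=\deg s+\deg z$ exists at every step of the descent.
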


\begin{lem}\label{lema_rj_pellove_jedn}
Let $(z,x)$ and $(z,y)$ be solutions of (\ref{jdba_pellova_prva}) and (\ref{jdba_pellova_druga}), respectively, with $x,y,z\in \mathbb{Q}(i)[X]$. Then, there exist non-negative integers $m$ and $n$ such that
\begin{equation}
        \label{jdba_opce_rjesenje_PRVE}
        z\sqrt{a}+x\sqrt{c}=(z_0\sqrt{a}+x_0\sqrt{c})\left(s+\sqrt{ac}\right)^{m},
\end{equation}
\begin{equation}
        \label{jdba_opce_rjesenje_DRUGE}
        z\sqrt{b}+y\sqrt{c}=(z_1\sqrt{b}+y_1\sqrt{c})\left(t+\sqrt{bc}\right)^{n}.
\end{equation}
\end{lem}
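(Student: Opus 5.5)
The statement just above is Lemma~\ref{lema_rj_pellove_jedn}. I will prove it by a descent argument in the ring $\mathbb{Q}(i)[X][\sqrt{a},\sqrt{c}]$. It suffices to treat \eqref{jdba_opce_rjesenje_PRVE}, since \eqref{jdba_opce_rjesenje_DRUGE} is the same argument with $(a,s)$ replaced by $(b,t)$. The unit to use is $\varepsilon=s+\sqrt{ac}$. Since $s^2-ac=1$, we have $\varepsilon^{-1}=s-\sqrt{ac}$. Multiplying a solution of \eqref{jdba_pellova_prva} by $\varepsilon^{\pm1}$ preserves the norm form $az^2-cx^2$, because $(z\sqrt a+x\sqrt c)(z\sqrt a-x\sqrt c)=az^2-cx^2$ and $\varepsilon$ has norm $1$. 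Concretely, write
\[
(z\sqrt a+x\sqrt c)(s-\sqrt{ac})=z'\sqrt a+x'\sqrt c,\qquad z'=sz-cx,\quad x'=sx-az.
\]
Then $z',x'\in\mathbb{Q}(i)[X]$ and $(z',x')$ is again a solution. After changing the signs of $z,x$ if necessary, I may assume their leading coefficients are normalised compatibly, so that in $z\sqrt a+x\sqrt c$ the top-degree parts of the two summands add rather than cancel.

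The key step is a degree-reduction claim. Suppose $(z,x)$ is a solution with $\deg z>\frac{3\gamma-\alpha}{4}$ (equivalently, via $az^2-cx^2=a-c$ and the degree bookkeeping $\alpha+2\deg z=\gamma+2\deg x$ whenever the degrees exceed those of $a-c$). Then $\deg z'<\deg z$. To show this, I will use the identity
\[
z'=sz-cx=\frac{s^2z^2-c^2x^2}{sz+cx}=\frac{(ac+1)z^2-c^2x^2}{sz+cx}=\frac{z^2+c(a-c)}{sz+cx},
\]
where the last step uses $az^2-cx^2=a-c$. The degree of the denominator is $\deg s+\deg z=\frac{\alpha+\gamma}{2}+\deg z$, since the leading terms do not cancel after the sign normalisation. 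The numerator has degree at most $\max(2\deg z,2\gamma)$. So $\deg z'<\deg z$ as soon as $\deg z>\frac{\gamma-\alpha}{2}$ and the $c(a-c)$ term is dominated. Getting exactly the threshold $\frac{3\gamma-\alpha}{4}$ of Lemma~\ref{prvi_deg} requires more care. I will use the fact that $\sqrt a$ and $\sqrt c$ can be expanded as Laurent series in $X^{-1}$ over $\mathbb{C}$. Then $z\sqrt a-x\sqrt c=(a-c)/(z\sqrt a+x\sqrt c)$ has small degree, and the standard argument of \cite[Lemma 4]{dl_17} applies verbatim. It only uses degrees and the algebraic relation $s^2-ac=1$, so it holds over any field of characteristic zero, in particular over $\mathbb{Q}(i)$.

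Iterating this reduction strictly decreases $\deg z$. So after finitely many steps, say $m$, I reach a solution $(z_0,x_0)$ satisfying the bounds \eqref{deg1}. Undoing the steps gives $z\sqrt a+x\sqrt c=(z_0\sqrt a+x_0\sqrt c)\varepsilon^m$ with $m\ge0$.

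The main obstacle is the non-cancellation of leading terms needed for the degree counts, which is where the sign normalisation must be chosen consistently. Over $\mathbb{Q}(i)$ this is delicate because the leading coefficients of $\sqrt a,\sqrt c$ may lie outside $\mathbb{Q}(i)$. I would handle it by doing all the degree estimates inside $\mathbb{C}((X^{-1}))$, choosing the square-root branches so that $z\sqrt a$ and $x\sqrt c$ have the same leading sign. I would also check that every resulting $z',x'$ stays in $\mathbb{Q}(i)[X]$, which holds because $s,a,c$ do.
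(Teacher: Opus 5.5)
There is a genuine gap, and it sits in the iteration rather than in your one-step estimate. That estimate is fine, and it already gives the exact threshold. When $sz+cx$ has full degree $\deg z+\frac{\alpha+\gamma}{2}$, your identity gives $\deg z'\le\max(2\deg z,2\gamma)-\deg z-\frac{\alpha+\gamma}{2}$, and this is $<\deg z$ exactly when $\deg z>\frac{3\gamma-\alpha}{4}$. So no series expansion or appeal to \cite[Lemma 4]{dl_17} is needed at that point.

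The trouble is \emph{iterating} the step. Non-cancellation in $sz+cx$ means $SZ=CX$ for the leading coefficients. This is a condition on the current pair relative to the given $s$. Choosing branches of $\sqrt a,\sqrt c$ so that $z\sqrt a$ and $x\sqrt c$ add does not arrange it, because that choice says nothing about whether $\sqrt a\sqrt c$ has leading coefficient $S$ or $-S$. You impose the condition once, on the initial $(z,x)$, and never show that $(z',x')$ inherits it. You also cannot re-impose it step by step. Flipping the sign of $x'$ conjugates the element, so the next multiplication by $\varepsilon^{-1}$ (your $\varepsilon=s+\sqrt{ac}$) becomes a multiplication by $\varepsilon$ in the original coordinates. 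You would then no longer end with $(z_0\sqrt a+x_0\sqrt c)\varepsilon^m$ for a single $m\ge0$. Indeed $sz'+cx'=z$, so if $(z',x')$ were not normalised, your next step would simply return to $z$. (Also, $\sqrt a\notin\mathbb C((X^{-1}))$ when $\alpha$ is odd.)

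The missing observation is short. Suppose $\deg z>\frac{3\gamma-\alpha}{4}$. Then $\alpha+2\deg z>\gamma\ge\deg(a-c)$, so \eqref{jdba_pellova_prva} forces $\alpha+2\deg z=\gamma+2\deg x$ and $AZ^2=CX^2$. Together with $S^2=AC$ this gives $\deg(sz)=\deg(cx)$ and $SZ=\pm CX\neq0$. Hence exactly one of $sz\pm cx$ has full degree, and by
\[
(sz+cx)(sz-cx)=z^2+c(a-c)
\]
the other has degree $<\deg z$. Your initial normalisation is just the choice of this smaller neighbour. Since $sz'+cx'=z$ has degree larger than $\deg z'$, the smaller neighbour of $(z',x')$ is again $sz'-cx'$. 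So the descent keeps going in one direction until $\deg z_0\le\frac{3\gamma-\alpha}{4}$, and then $\deg x_0\le\frac{\alpha+\gamma}{4}$ follows from $cx_0^2=az_0^2-(a-c)$.

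The argument the paper relies on (it omits the proof and cites \cite[Lemma 4]{dl_17} and \cite[Lemma 2.1]{fij}) avoids this bookkeeping altogether. Choose, in the orbit $\{(z\sqrt a+x\sqrt c)\varepsilon^k\}_{k\in\mathbb Z}$, an element with $\deg z_0$ minimal. Minimality gives $\deg(sz_0\pm cx_0)\ge\deg z_0$ for both signs, one of them has full degree, and the same identity yields the bound. The sign of $m$ is then fixed by $x_0\mapsto -x_0$, as the paper notes right after the lemma.
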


Let $(v_m)_{m\in \mathbb{Z}}$ and $(x_m)_{m\in \mathbb{Z}}$ be the sequences of polynomials given by (\ref{jdba_opce_rjesenje_PRVE}), such that
$v_0 = z_0$,
\begin{equation*}
v_1\sqrt{a}+x_1\sqrt{c} = (z_0\sqrt{a}+x_0\sqrt{c})\left(s+\sqrt{ac}\right), \ \text{and}
\end{equation*}
\begin{equation*}
v_{-1}\sqrt{a}+x_{-1}\sqrt{c} = (z_0\sqrt{a}+x_0\sqrt{c})\left(s+\sqrt{ac}\right)^{-1} = (z_0\sqrt{a}+x_0\sqrt{c})\left(s-\sqrt{ac}\right).
\end{equation*}
More precisely, we have
\begin{equation}
v_1 = z_0s+x_0c, \ \  v_{-1} = z_0s-x_0c.
\end{equation}
The solution \((z_0,x_0)\) is chosen, as in Lemma~\ref{prvi_deg}, with
minimal possible degree of \(z_0\). We conclude
\begin{align}
  \deg(v_1), \ \deg(v_{-1})\geq \deg(v_0) = \deg(z_0). 
\end{align}
Replacing \(x_0\) by \(-x_0\) interchanges \(v_1\) and \(v_{-1}\). Thus, by
allowing both choices $v_1=sz_0\pm cx_0,$ it is enough to consider indices \(m\geq0\).

Using induction on \(m\geq0\) in \eqref{jdba_opce_rjesenje_PRVE}, one proves that if \(c\mid(z^2-1)\), then \(c\mid(z_0^2-1)\) as well. The proof follows the strategy of \cite[Lemma 4]{dl_17}; the corresponding statement for \(z_1\) is obtained analogously. Thus, there exist $d_0, d_1\in\mathbb{Q}(i)[X]$ such that 
\begin{equation}\label{d0}
    ad_0+1 = x_0^2 \ \ \textrm{and} \ \ cd_0+1 = z_0^2,
\end{equation}
\begin{equation}\label{d1}
    bd_1+1 = y_1^2 \ \ \textrm{and} \ \ cd_1+1 = z_1^2.
\end{equation}

\begin{remark}
By \eqref{d0} and \eqref{d1}, if \(z_0\) or \(z_1\) is constant, then it is equal to \(\pm1\). As in \cite[Lemma 4]{dl_17}, if $(z_0,x_0)\neq (\pm 1,\pm 1)$, then $\deg(z_0) \geq\frac{\gamma}{2}$ and $\deg(x_0)\geq\frac{\alpha}{2}$. Analogously, if $(z_1,y_1)\neq (\pm 1,\pm 1)$, then $\deg(z_1) \geq\frac{\gamma}{2}$ and $\deg(y_1)\geq\frac{\beta}{2}$. 
\end{remark}

In order to extend a triple \( \{a, b, c\} \) to a Diophantine quadruple \( \{a, b, c, d\} \), we solve equations of the form
\begin{equation}
 z=v_m=w_n,\label{jedn-z}   
\end{equation} where binary recurrence sequences $(v_m)_{m\geq0}$ and $(w_n)_{n\geq0}$ are  given by
\begin{align}
    &v_0=z_0,\quad v_1=sz_0+cx_0,\quad v_{m+2}=2sv_{m+1}-v_m,\label{rekurzija_vm}\\
    &w_0=z_1,\quad w_1=tz_1+cy_1,\quad w_{n+2}=2tw_{n+1}-w_n.\label{rekurzija_wn}
\end{align}

\begin{remark}\label{predznaci}
 By taking $(\pm z_0, x_0)$ in \eqref{rekurzija_vm} we have covered all possible $z$'s, because all other pairs of the form $(\pm z_0,\pm x_0)$ can only lead to the same $d$. The same holds for $(\pm z_1, y_1)$ in \eqref{rekurzija_wn}.    
\end{remark}

The following lemmas are adapted variants of \cite[Lemma 5]{dl_17} and \cite[Lemma 1]{dij}, respectively. Given that the structure of the proof closely mirrors the original, we omit the details.

\begin{lem}\label{lema_stupnjevi}
Let $\{a,b,c,d\}$ be a Diophantine quadruple in $\mathbb{Q}(i)[X]$ and $(v_m)_{m\geq0}$ and $(w_n)_{n\geq0}$ be sequences defined as in (\ref{rekurzija_vm}) and (\ref{rekurzija_wn}), respectively. Then, for $m\geq 1$ and $n\geq 1$, the following holds:
\begin{equation}\label{degvm}
    \deg(v_m)=(m-1)\frac{\alpha+\gamma}{2}+\deg(v_1),
\end{equation}
\begin{equation}\label{degwn}
    \deg(w_n)=(n-1)\frac{\beta+\gamma}{2}+\deg(w_1).
\end{equation}
Also, 
\begin{equation}\label{degv1}
\frac{\gamma}{2}\leq \deg(v_1)\leq \frac{\alpha+5\gamma}{4},
\end{equation}
\begin{equation}\label{degw1}
    \frac{\gamma}{2}\leq \deg(w_1)\leq \frac{\beta+5\gamma}{4}.
\end{equation}
\end{lem}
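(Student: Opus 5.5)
The degree formulas \eqref{degvm}--\eqref{degwn} follow from the recurrences \eqref{rekurzija_vm}--\eqref{rekurzija_wn} by induction, provided no cancellation of leading terms occurs. The bounds \eqref{degv1}--\eqref{degw1} then come from Lemma~\ref{prvi_deg} and the preceding Remark. I treat $(v_m)$; the argument for $(w_n)$ is identical, with $a,s,x_0,\alpha$ replaced by $b,t,y_1,\beta$.

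\textbf{Step 1: bounds on $\deg(v_1)$.} From $ac+1=s^2$ and $\alpha\le\gamma$, $\gamma>0$, we get $\deg s=\frac{\alpha+\gamma}{2}$. Lemma~\ref{prvi_deg} gives $\deg(sz_0)\le\frac{\alpha+\gamma}{2}+\frac{3\gamma-\alpha}{4}=\frac{\alpha+5\gamma}{4}$ and $\deg(cx_0)\le\gamma+\frac{\alpha+\gamma}{4}=\frac{\alpha+5\gamma}{4}$, so $\deg v_1\le\frac{\alpha+5\gamma}{4}$.

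For the lower bound I use $v_1v_{-1}=s^2z_0^2-c^2x_0^2$. Substituting $z_0^2=cd_0+1$ and $x_0^2=ad_0+1$ from \eqref{d0} gives
$$v_1v_{-1}=(ac+1)(cd_0+1)-c(ad_0+1)c=z_0^2+c(a-c)+\ldots.$$
A cleaner route is to note that $(v_{\pm1},x_{\pm1})$ solves \eqref{jdba_pellova_prva}, so $c\mid v_{\pm1}^2-1$. Hence either $v_1=\pm1$ or $\deg v_1\ge\gamma/2$. If $v_1$ were constant, then minimality $\deg v_{-1}\ge\deg v_0$ forces $z_0$ constant, so $z_0=\pm1$. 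Then $d_0=0$ and $x_0=\pm1$, which gives $v_1=\pm s\pm c$, of degree $\gamma>0$, a contradiction. So $\deg v_1\ge\gamma/2$.

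\textbf{Step 2: induction.} I write the recurrence as $v_{m+1}=2sv_m-v_{m-1}$ and prove by induction that $\deg v_{m+1}=\deg v_m+\deg s$ for $m\ge1$. This needs $\deg v_{m-1}<\deg s+\deg v_m$.

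For $m\ge2$ this is immediate from the inductive hypothesis, since $\deg v_m=\deg v_{m-1}+\deg s$ and $\deg s>0$.

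\textbf{Step 3: the base case $m=1$.} This is the main obstacle: we need $\deg v_0<\deg s+\deg v_1$, i.e. $\deg(v_2)=\deg(s)+\deg(v_1)$, and here cancellation must be ruled out.

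Since $\deg v_1\ge\deg v_0$ by the minimality of $z_0$, and $\deg s=\frac{\alpha+\gamma}{2}>0$ because $\gamma>0$, we get $\deg(2sv_1)>\deg v_0$. So no cancellation occurs.

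The one subtlety is the minimality claim $\deg v_{\pm1}\ge\deg v_0$. It holds because $(v_{\pm1},x_{\pm1})$ is again a solution of \eqref{jdba_pellova_prva} belonging to the same class. I would take this as given from the choice of $(z_0,x_0)$ as in \cite[Lemma 5]{dl_17}.

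With the base case settled, the induction yields \eqref{degvm}, and the same argument for $(w_n)$ yields \eqref{degwn}.
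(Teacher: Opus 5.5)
Your overall scheme is the standard Dujella--Luca argument that the paper defers to: compute $\deg s=\frac{\alpha+\gamma}{2}$, take the upper bound from Lemma~\ref{prvi_deg}, and induct on $v_{m+1}=2sv_m-v_{m-1}$ with the base case coming from $\deg v_1\ge\deg v_0$ and $\deg s>0$. That part, including \eqref{degvm}--\eqref{degwn}, is fine. (When you force $z_0$ to be constant you need $\deg v_1\ge\deg v_0$, not $\deg v_{-1}\ge\deg v_0$.)

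The gap is in the lower bound. After reducing to $z_0=\pm1$, $x_0=\pm1$, you assert that $\pm s\pm c$ has degree $\gamma$. This uses $\deg s<\gamma$, i.e.\ $\alpha<\gamma$. For $(w_n)$, which you call identical, the analogous claim $\deg(\pm t\pm c)=\gamma$ needs $\beta<\gamma$. Yet $\beta=\gamma$ occurs throughout the paper, and there the degree really can drop. For instance, with $d_-=0$ one has $t=\pm(b\pm r)$, and $c-t=a\pm r$ has degree $\frac{\alpha+\gamma}{2}<\gamma$.

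What you actually need is only $\pm t\pm c\neq\pm1$. If $t=\pm c\pm1$, then $t^2=bc+1$ gives $b=c\pm2$, hence $(r-s)(r+s)=\pm2a$. Since $\deg r=\deg s=\frac{\alpha+\gamma}{2}$ and $R^2=AB=AC=S^2$, one of $r\pm s$ has degree exactly $\frac{\alpha+\gamma}{2}$. This forces $\alpha\ge\frac{\alpha+\gamma}{2}$, i.e.\ $\alpha=\gamma$. Likewise $s=\pm c\pm1$ forces $a=c\pm2$, so again $\alpha=\gamma$. So your argument closes exactly when $\alpha<\gamma$.

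When $\alpha=\gamma$ (hence $\alpha=\beta=\gamma$), no argument can give the lower bound, because it is false. Take $\{a,b,c\}=\{X+2,\,4X+4,\,X\}$ with $r=2X+3$, $s=X+1$, $t=2X+1$, extended by $d_+$. Then $(z_0,x_0)=(1,-1)$ is a minimal solution of $(X+2)z^2-Xx^2=2$, and $v_1=s-c=1$, so $\deg v_1=0<\gamma/2$. The relabelling $\{4X+4,X+2,X\}$ does the same to $w_1$.

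The formula \eqref{degvm} still holds there, since your base case only uses $\deg v_1\ge\deg v_0$. But \eqref{degv1}--\eqref{degw1} need the hypothesis $\alpha<\gamma$, or the proviso $v_1,w_1\neq\pm1$.

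For $\alpha<\gamma$, the identity $v_1v_{-1}=z_0^2+c(a-c)$ that you began and abandoned finishes cleanly. The right-hand side has degree $2\gamma$, which gives the sharper bound $\deg v_1\ge\frac{3\gamma-\alpha}{4}\ge\frac{\gamma}{2}$. Its analogue $w_1w_{-1}=z_1^2+c(b-c)$, however, again needs $\beta<\gamma$. So for $\alpha<\beta=\gamma$ you must use the divisibility argument $c\mid w_1^2-1$ together with the $b\neq c\pm2$ step above.
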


\begin{lem}\label{degineq}
For $v_m = w_n$, where $(v_m)_{m\geq0}$ and $(w_n)_{n\geq0}$ are defined by (\ref{rekurzija_vm}) and (\ref{rekurzija_wn}), we have $n-1 \leq m \leq 2n+1.$
\end{lem}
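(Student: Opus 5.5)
We will prove that $v_m=w_n$ forces $n-1\le m\le 2n+1$ by comparing the degrees of the two sides, using the explicit formulas of Lemma~\ref{lema_stupnjevi}. The plan is to dispose of small indices first, so that \eqref{degvm} and \eqref{degwn} apply, and then derive each inequality separately.

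For $m,n\ge1$, equating \eqref{degvm} and \eqref{degwn} gives
\[
(m-1)\frac{\alpha+\gamma}{2}+\deg(v_1)=(n-1)\frac{\beta+\gamma}{2}+\deg(w_1).
\]

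\emph{Upper bound $m\le 2n+1$.} Since $\alpha\ge0$, we have $\frac{\alpha+\gamma}{2}\ge\frac{\gamma}{2}$. Since $\beta\le\gamma$, we have $\frac{\beta+\gamma}{2}\le\gamma$. Bounding $\deg(v_1)\ge\frac{\gamma}{2}$ by \eqref{degv1} and $\deg(w_1)\le\frac{\beta+5\gamma}{4}\le\frac{3\gamma}{2}$ by \eqref{degw1}, we get
\[
(m-1)\frac{\gamma}{2}+\frac{\gamma}{2}\le (n-1)\gamma+\frac{3\gamma}{2}.
\]
Dividing by $\gamma/2>0$ gives $m\le 2n+1$.

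\emph{Lower bound $m\ge n-1$.} We use $\alpha\le\beta$, so $\frac{\alpha+\gamma}{2}\le\frac{\beta+\gamma}{2}$, together with $\deg(v_1)\le\frac{\alpha+5\gamma}{4}$ and $\deg(w_1)\ge\frac{\gamma}{2}$. Set $k=\frac{\beta+\gamma}{2}$, so that $\frac{\gamma}{2}\le k\le\gamma$. Then
\[
(n-1)k+\frac{\gamma}{2}\le (m-1)k+\frac{\alpha+5\gamma}{4}.
\]
Hence
\[
(n-m)k\le \frac{\alpha+3\gamma}{4}\le \frac{\beta+3\gamma}{4}\le \frac{3}{2}k,
\]
where the last step holds because $\frac{3}{2}k-\frac{\beta+3\gamma}{4}=\frac{\beta}{2}\ge0$. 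This gives $n-m\le \frac32$, so $m\ge n-1$ since both are integers.

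\emph{Small indices.} If $m=0$, then $\deg(v_0)=\deg(z_0)\le\frac{3\gamma-\alpha}{4}<\gamma$ by \eqref{deg1}. On the other hand, for $n\ge2$, \eqref{degwn} gives $\deg(w_n)\ge\frac{\beta+\gamma}{2}+\frac{\gamma}{2}\ge\gamma$. So equality forces $n\le1=m+1$, and $m\le 2n+1$ is trivial. If $n=0$, then $\deg(w_0)\le\frac{3\gamma-\beta}{4}<\gamma$ by \eqref{ineq20}, while for $m\ge2$, \eqref{degvm} gives $\deg(v_m)\ge\frac{\alpha+\gamma}{2}+\frac{\gamma}{2}\ge\gamma$. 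So equality forces $m\le1=2n+1$, and $m\ge n-1=-1$ is trivial.

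The step requiring the most care is the lower bound. The crude estimate must not lose the factor relating $\frac{\alpha+5\gamma}{4}$ to the step size $k$; the argument works only because $\alpha\le\beta$, and this ordering is used twice.
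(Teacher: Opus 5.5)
Your argument is correct and follows the paper's route: the paper omits the proof and defers to \cite[Lemma 1]{dij}, where the bound comes from the same comparison of the degree formulas in Lemma~\ref{lema_stupnjevi}. Your separate treatment of $m=0$ or $n=0$, using the bounds of Lemma~\ref{prvi_deg}, correctly covers the indices that those formulas do not reach.
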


We proceed by establishing several congruence relations for \( v_m \) and \( w_n \), which will be crucial in the proof of Theorem~\ref{glavni-tm}. Since the proof of the following lemma follows by a simple induction, we omit it.

\begin{lem}\label{initial1}
The sequences $(v_m)_{m\geq0}$ and $(w_n)_{n\geq0}$, given by (\ref{rekurzija_vm}) and (\ref{rekurzija_wn}), respectively, satisfy the following congruences
\begin{equation}\label{v2mc}
    v_{2m} \equiv v_0=z_0 \pmod{c}, \ \ v_{2m+1} \equiv v_1 \pmod{c},
\end{equation}
\begin{equation}\label{w2nc}
    w_{2n} \equiv w_0=z_1 \pmod{c}, \ \ w_{2n+1} \equiv w_1 \pmod{c}.
\end{equation}
\end{lem}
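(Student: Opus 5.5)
We prove the congruences by induction on $m$ (resp. $n$), reducing the recurrences \eqref{rekurzija_vm} and \eqref{rekurzija_wn} modulo $c$. The basic observation is that $s^2 = ac+1 \equiv 1 \pmod{c}$ and $t^2 = bc+1 \equiv 1 \pmod{c}$.

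First we compute $v_2$ directly. We have
\[
v_2 = 2sv_1 - v_0 = 2s(sz_0+cx_0) - z_0 = (2s^2-1)z_0 + 2scx_0 = (2ac+1)z_0 + 2scx_0 ,
\]
so $v_2 \equiv z_0 \pmod{c}$. Similarly,
\[
v_3 = 2sv_2 - v_1 \equiv 2s z_0 - (sz_0 + cx_0) \equiv sz_0 \equiv v_1 \pmod{c}.
\]

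Rather than carrying these explicit forms along, it is cleaner to use the two-step recurrence. Iterating $v_{k+2}=2sv_{k+1}-v_k$ once more gives
\[
v_{k+4} = 2sv_{k+3} - v_{k+2} = (4s^2-1)v_{k+2} - 2sv_{k+1} = (4s^2-2)v_{k+2} - v_k ,
\]
where the last step uses $2sv_{k+1} = v_{k+2} + v_k$. Modulo $c$ this becomes
\[
v_{k+4} \equiv 2v_{k+2} - v_k \pmod{c}.
\]
Hence the subsequences $(v_{2j})_j$ and $(v_{2j+1})_j$ are each arithmetic progressions modulo $c$. Since $v_2 \equiv v_0$ and $v_3 \equiv v_1$, both progressions have zero common difference, and induction on $m$ gives \eqref{v2mc}.

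The same argument with $t$ in place of $s$ and $(z_1,y_1)$ in place of $(z_0,x_0)$ gives \eqref{w2nc}: we have $w_2 = (2bc+1)z_1 + 2tcy_1 \equiv z_1$ and $w_3 \equiv tz_1 \equiv w_1 \pmod{c}$.

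There is no real obstacle here. The only point needing care is the base cases $v_2$ and $v_3$, where one must actually use $s^2 \equiv 1 \pmod{c}$ and not merely reduce coefficients.
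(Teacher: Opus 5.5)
Your proof is correct and is the same simple induction the paper has in mind (the paper omits the details); everything rests on reducing the recurrences modulo $c$ using $s^2\equiv 1$ and $t^2\equiv 1 \pmod{c}$. A shorter alternative is the invariant $v_{k+1}\equiv s v_k \pmod{c}$: it holds for $k=0$ since $v_1=sz_0\pm cx_0$, and it is preserved because $2sv_{k+1}-v_k\equiv 2sv_{k+1}-s^2v_k\equiv sv_{k+1}$, so $v_k\equiv s^k z_0$ directly; your four-step recurrence argument works equally well.
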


\section{Gap principles for degrees}
The following lemma is an adapted version of \cite[Lemma~2]{glavni} and, since the proof is analogous, it is omitted. In what follows, the sign in \(d_{\pm}\) is chosen so that \(d_-\) denotes the regular extension of smaller degree.

\begin{lem}\label{d-forms}
Let $\{a, b, c\}$ be a Diophantine triple in $\mathbb{Q}(i)[X]$ such that (\ref{jdbe_osnovne}) holds. Then, for 
\begin{equation}\label{dminus}
d_- = a+b+c+2(abc\pm rst),
\end{equation}
we have 
\begin{equation}\label{adbdcd}
ad_-+1 = u^2, \ \ bd_-+1 = v^2, \ \ cd_-+1 = w^2,
\end{equation}
where 
\begin{equation}\label{uvw-minus}
u = at \pm rs, \ \ v = bs \pm rt, \ \ w = cr \pm st.
\end{equation}
Additionally, we have
\begin{equation}\label{c-d_-}
c = a + b + d_- + 2(abd_-\pm ruv),
\end{equation}
\begin{equation}\label{d_-1}
d_- = a+b-c+2rw,
\end{equation}
\begin{equation}\label{d_-2}
d_- = a - b + c +2sv,
\end{equation}
\begin{equation}\label{d_-3}
d_- = -a+b+c+2tu.
\end{equation}
\end{lem}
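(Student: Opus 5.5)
The whole lemma is a finite collection of polynomial identities in $\mathbb{Q}(i)[X]$. They follow from the relations \eqref{jdbe_osnovne}, i.e.\ $r^2=ab+1$, $s^2=ac+1$, $t^2=bc+1$. So the plan is direct algebraic verification, with one sign $\pm$ fixed throughout. The sign is the one that makes $d_-$ the regular extension of smaller degree, and it is used consistently in \eqref{dminus} and \eqref{uvw-minus}. Since the computation never uses anything special about $\mathbb{Q}(i)$, it is the same as in \cite[Lemma~2]{glavni}.

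\textbf{Squares \eqref{adbdcd}.} Expand
\[
(at\pm rs)^2=a^2t^2\pm 2arst+r^2s^2 .
\]
Substituting $t^2=bc+1$ and $r^2s^2=(ab+1)(ac+1)$ gives $a^2bc+a^2+a^2bc+ab+ac+1\pm2arst$. This is exactly $a\bigl(a+b+c+2abc\pm2rst\bigr)+1=ad_-+1$. By the symmetry $a\leftrightarrow b$ (with $s\leftrightarrow t$) and $a\leftrightarrow c$ (with $r\leftrightarrow t$), the other two identities for $v$ and $w$ follow by the same computation.

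\textbf{Linear identities \eqref{d_-1}--\eqref{d_-3}.} Compute
\[
2rw=2cr^2\pm2rst=2abc+2c\pm2rst .
\]
Hence $a+b-c+2rw=a+b+c+2abc\pm2rst=d_-$, which is \eqref{d_-1}. The identities \eqref{d_-2} and \eqref{d_-3} are the same computation after permuting the roles: $2sv=2bs^2\pm2rst$ and $2tu=2at^2\pm2rst$.

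\textbf{Reverse formula \eqref{c-d_-}.} This is the only step needing care. The point is that $\{a,b,d_-\}$ is a Diophantine triple with roots $r,u,v$, by \eqref{adbdcd}. Therefore $a+b+d_-+2abd_-\pm2ruv$ is one of its two regular extensions, and we must show it equals $c$.

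I would verify this directly: multiply out $ruv=r(at\pm rs)(bs\pm rt)$ and reduce using $r^2=ab+1$ and $s^2,t^2$. Alternatively, I would use the quadratic characterization. A regular fourth element $e$ of $\{a,b,d_-\}$ satisfies
\[
(a+b-d_--e)^2=4(ab+1)(d_-e+1),
\]
and regularity of $\{a,b,c,d_-\}$ gives exactly this relation with $e=c$. Since the equation is symmetric in the pair $(c,d_-)$, $c$ is a root of the quadratic in $e$ whose roots are $a+b+d_-+2abd_-\pm 2ruv$.

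The main obstacle is the sign bookkeeping: showing that $c$ is the root carrying the same sign $\pm$ as in \eqref{dminus}. For this I would first settle the relation \eqref{d_-1}. Then I would compare the explicit expansion of $ruv$, whose leading term is $r^2st\cdot(\pm1)^{2}$ plus terms involving $ab$, against $c$. Concretely, expanding gives
\[
ruv=\pm\bigl(abrst\bigr)+r^2(a t^2+b s^2)/\!\,(\cdots),
\]
and I would simply expand it fully and reduce to confirm the identity with the chosen sign.
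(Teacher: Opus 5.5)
Your verification of \eqref{adbdcd} and of \eqref{d_-1}--\eqref{d_-3} is correct. It is the same direct computation the paper intends; the paper omits the proof and refers to \cite[Lemma~2]{glavni}.

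The problem is the last step. The ``main obstacle'' you set yourself, namely that $c$ is the root carrying the \emph{same} sign as in \eqref{dminus}, is false, so your planned expansion cannot confirm it. The partial expansion you display is also wrong: the sign belongs on the $r^2$-term, and a term $r^3st$ is missing. Let $\sigma\in\{1,-1\}$ be the common sign in \eqref{dminus} and \eqref{uvw-minus}. Then
\[
ruv=abrst+\sigma r^2(at^2+bs^2)+r^3st=(2ab+1)rst+\sigma(ab+1)(a+b+2abc),
\]
while $a+b+d_-+2abd_-=a+b+(1+2ab)(a+b+c+2abc)+2\sigma(1+2ab)rst$. Subtracting $2\sigma ruv$ removes the $rst$-terms and leaves
\[
a+b+(1+2ab)(a+b+c+2abc)-2(ab+1)(a+b+2abc)=c .
\]
So $c=a+b+d_-+2(abd_--\sigma ruv)$, and the same sign gives $c+4\sigma ruv$ instead. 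For example, $\{X-1,X+1,4X\}$ has $r=X$, $s=2X-1$, $t=2X+1$, $d_-=0$ with $\sigma=-1$, and $u=v=-1$. There the same-sign expression equals $0\neq 4X$. Thus \eqref{c-d_-} holds with the opposite sign. Its $\pm$ has to be read as an unspecified sign, which is also how the paper uses it afterwards.

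For that reading, your quadratic-characterization argument is already a complete proof, so keep it and drop the sign-bookkeeping paragraph. It can be made self-contained. Squaring \eqref{d_-1} gives $(a+b-c-d_-)^2=4r^2w^2=4(ab+1)(cd_-+1)$, so $c$ is a root of $e^2-2(a+b+d_-+2abd_-)e+(a+b-d_-)^2-4(ab+1)=0$. The identity
\[
(a+b+d+2abd)^2-(a+b-d)^2+4(ab+1)=4(ab+1)(ad+1)(bd+1),
\]
taken with $d=d_-$ and combined with \eqref{adbdcd}, shows that the roots of this quadratic are $a+b+d_-+2abd_-\pm2ruv$.
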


The polynomial $d_-$ plays an important role in our analysis. The set $\{a,b,c,d_-\}$ is a regular Diophantine quadruple in $\mathbb{Q}(i)[X]$ and for $d_-\neq0$, from \cite[Lemma 1]{dfl} and \cite[Lemma 2]{dij}, we have 
\begin{equation}\label{ocjenad-}
0\leq \deg(d_-) \leq \gamma - \alpha - \beta.
\end{equation} This implies that $\gamma\geq \alpha+\beta$ but, by observing the possibilities for $d_-$, we conclude even more about polynomials $a$, $b$ and $c$ and possible relations between their degrees.

\begin{lem}\label{lemma-d_-}
Let $\{a, b, c\}$ be a Diophantine triple in $\mathbb{Q}(i)[X]$ and 
let \(d_-\) denote the regular extension of smaller degree defined in
\eqref{dminus}. Then
$$d_-=0 \ \ \text{or} \ \ \deg(d_-) = \gamma - \alpha - \beta.$$ 

If $d_-\neq 0$, then the following possibilities occur:
\begin{enumerate}
    \item $\alpha=0$, $\beta=\gamma$ and $d_-=a=\pm\frac{1}{2}\left(f - \frac{1}{f}\right)$, for $f\in \mathbb{Q}(i)\backslash\{0\}$. The special case \(a=\pm i\) is obtained by taking \(f=\pm i\). 
\item $d_- = c + sv + tu$, where $s,t$ are given with (\ref{jdbe_osnovne}) and $u,v$ with (\ref{adbdcd}), and
\begin{itemize}
    \item $\alpha=0$, $\beta<\gamma$ and $\deg(d_-)=\gamma-\beta>0$ or
    \item $\alpha>0$, $\gamma =\alpha+\beta$ and $\deg(d_-)=0$ or
    \item $\alpha>0$, $\gamma>\alpha+\beta$ and $\deg(d_-)>0$.
\end{itemize}
\end{enumerate}
\end{lem}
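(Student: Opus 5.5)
We plan to work from the identity behind \eqref{c-d_-}: since $\{a,b,d_-,c\}$ is itself a regular quadruple, we have $(c-a-b-d_-)^2=4(ab+1)(cd_-+1)$ and $(c-a-b-d_-)^2=4(abd_-\pm ruv)^2$. These show that $c$ is one of the two regular extensions of the triple $\{a,b,d_-\}$ (or of the improper triple if $d_-\in\{a,b\}$). First we compare degrees in $d_+d_-$. Expanding $(a+b+c+2abc)^2-4r^2s^2t^2$ gives the classical relation
\[
d_+d_-=a^2+b^2+c^2-2ab-2ac-2bc-4abc-4 .
\]
Its leading term comes from $-4abc$, provided $\alpha+\beta+\gamma>2\gamma$ fails and the $c^2$ term does not cancel it. We will handle this carefully: $\deg(d_+)=\alpha+\beta+\gamma$ holds from the explicit form $d_+=a+b+c+2abc+2rst$, because the leading coefficients of $2abc$ and $2rst$ agree up to the sign choice we fix. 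We must check, using $\deg r=(\alpha+\beta)/2$ and so on, that no cancellation occurs there.

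Second, we bound $\deg(d_+d_-)$. We have $\deg(d_+d_-)\le \max(2\gamma,\alpha+\beta+\gamma)$, and by \eqref{ocjenad-} $\deg(d_-)\le\gamma-\alpha-\beta$. To get equality, we show the top-degree coefficient of the right-hand side does not vanish unless $d_-=0$. If $\gamma>\alpha+\beta$, the term $c^2$ dominates and $d_+d_-$ has degree $2\gamma$, so $\deg d_-=2\gamma-(\alpha+\beta+\gamma)=\gamma-\alpha-\beta$. If $\gamma=\alpha+\beta$, the degree-$2\gamma$ part is $c^2-4abc$ at top level. Either this top coefficient survives, giving $\deg d_-=0$, or it cancels. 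In the cancelling case we use $d_-=a+b-c+2rw$ from \eqref{d_-1} together with $w^2=cd_-+1$ to argue that cancellation forces all lower terms to combine into $d_-=0$, or else contradicts \eqref{ocjenad-}. This proves the first assertion.

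For the case analysis we set $D=\deg d_-=\gamma-\alpha-\beta$ and split on $\alpha$. If $\alpha>0$, the two sub-bullets of (2) follow directly from whether $\gamma=\alpha+\beta$. If $\alpha=0$, then either $\beta<\gamma$, giving $D=\gamma-\beta>0$, or $\beta=\gamma$ and $D=0$. In the latter case $d_-$ and $a$ are both constant in a quadruple with non-constant members. By Lemma~\ref{lema_samo_jedan_konstantan}, applied in $\mathbb{C}[X]$, this is impossible unless $d_-=a$. Then $a^2+1=u^2$ with $u$ constant, so $(u-a)(u+a)=1$. Writing $u+a=f$ gives $a=\pm\frac12(f-\frac1f)$ with $f\in\mathbb{Q}(i)^*$, and $f=\pm i$ gives $a=\pm i$. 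For the expression $d_-=c+sv+tu$ in the remaining cases, we average \eqref{d_-2} and \eqref{d_-3}: adding them gives $2d_-=2c+2sv+2tu$, which is exactly the claim. So this part is immediate, and it is valid in all cases, though it is only stated for (2).

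The main obstacle is the degree equality in the borderline case $\gamma=\alpha+\beta$, where the leading terms of $c^2$ and $4abc$ can cancel. There we would first try to use the regular-triple structure: with $c=c_\pm=a+b\pm2r$, the cancellation corresponds to $\{a,b,c\}$ being a regular triple, for which $d_-$ computes to $0$ directly from \eqref{dminus}, or to a constant when $\alpha=0$.
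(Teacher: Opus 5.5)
\textbf{The identity is false.} The degree comparison in your second step rests on an incorrect identity. In $(a+b+c+2abc)^2-4(ab+1)(ac+1)(bc+1)$, both $4a^2b^2c^2$ and $4abc(a+b+c)$ cancel completely, so
\[
d_+d_-=a^2+b^2+c^2-2ab-2ac-2bc-4=(c-a-b)^2-4(ab+1),
\]
with no $-4abc$ term. For example, for $\{1,3,8\}$ one has $d_-=0$, while your right-hand side equals $-96$.

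Everything you say about the degree-$2\gamma$ part being $c^2-4abc$ when $\gamma=\alpha+\beta$, and about a possible cancellation there, is therefore an artefact. The way you propose to dispose of that cancellation is also not an argument: you appeal to \eqref{d_-1}, to $w^2=cd_-+1$, and to a guess that the cancellation corresponds to $\{a,b,c\}$ being regular. In the case $\gamma>\alpha+\beta$ your conclusion $\deg(d_+d_-)=2\gamma$ is right, but it is read off from a wrong formula. As written, the proof of the first assertion does not stand.

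\textbf{The repair.} With the correct identity your approach works, and more simply than you expected. Since $RST=\pm ABC$ and $\beta>0$, we have $\deg d_+=\alpha+\beta+\gamma$.
\begin{itemize}
\item If $\beta<\gamma$, the coefficient of $X^{2\gamma}$ in $(c-a-b)^2-4(ab+1)$ is $C^2\neq0$. Hence $d_-\neq0$ and $\deg d_-=\gamma-\alpha-\beta$.
\item If $\beta=\gamma$, then $\deg(d_+d_-)\le2\gamma$ gives $\deg d_-\le-\alpha$. So either $d_-=0$, or $\alpha=0$ and $d_-$ is a nonzero constant.
\end{itemize}
There is no borderline obstacle. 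Even within your framework, when $\gamma=\alpha+\beta$ the bound \eqref{ocjenad-} already gives $\deg d_-\le0$, which is all the first assertion asks. A cancellation there would only mean $\deg(d_+d_-)<\deg d_+$, i.e.\ $d_-=0$.

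\textbf{Comparison with the paper.} The repaired route is genuinely different from the paper's and self-contained: it does not need the cited bound \eqref{ocjenad-}, and it also shows $d_-\neq0$ whenever $\beta<\gamma$. The paper instead takes \eqref{ocjenad-} as given and writes $d_-=c+sv+tu$. It then notes that $\deg(sv),\deg(tu)\le\gamma$, with equality exactly when $\deg d_-=\gamma-\alpha-\beta$; if the inequalities were strict, $\deg c<\gamma$ would follow. The paper handles the constant case with $\alpha=0$ separately, via $(a+u)(a-u)=-1$ and $c=2a-b-2tu$, to force $\beta=\gamma$.

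Your treatment of the constant case, through Lemma~\ref{lema_samo_jedan_konstantan} and $(u-a)(u+a)=1$, matches the paper. So does your derivation of $d_-=c+sv+tu$ by adding \eqref{d_-2} and \eqref{d_-3}.
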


\begin{proof}
Let $d_-\neq 0$. We proceed as follows:
\begin{enumerate}
\item Assume first that $d_-$ is a constant polynomial and $\alpha =0$. Since we have at most one constant polynomial in a Diophantine quadruple in $\mathbb{Q}(i)[X]$, we conclude $d_- = a$. By (\ref{adbdcd}), we have $(a+u)(a-u)=-1$, for $a, u \in \mathbb{Q}(i)[X]$. This is possible for
\begin{enumerate}
\item $a = \pm i, \ u = 0$, which happens in the case of integer factorization, i.e. for $a, u \in \mathbb{Z}[i][X]$, 
and
\item $a+u = \pm f, \ \ a-u = \mp \frac{1}{f},$ when $f\in \mathbb{Q}(i)\backslash\{0\}$.
\end{enumerate}
We examine each case separately.
\begin{enumerate}
\item For $a = \pm i, \ u = 0$ by (\ref{d_-3}) we get $c = -b\pm 2i$, and obviously $\beta = \gamma$. 
\item Additionally, in this case for $f\in \mathbb{Q}(i)\backslash\{0\}$ we get
\[
(a, u) \in \left\{
\begin{array}{l}
\left( \frac{1}{2}\left(f - \frac{1}{f}\right), \ \frac{1}{2}\left(f + \frac{1}{f}\right) \right), \quad
\left( \frac{1}{2}\left(f - \frac{1}{f}\right), \ -\frac{1}{2}\left(f + \frac{1}{f}\right) \right), \\[5pt]
\left( \frac{1}{2}\left(\frac{1}{f} - f\right), \ -\frac{1}{2}\left(f + \frac{1}{f}\right) \right), \quad
\left( \frac{1}{2}\left(\frac{1}{f} - f\right), \ \frac{1}{2}\left(f + \frac{1}{f}\right) \right)
\end{array}
\right\}.
\]
From (\ref{d_-3}), we get 
\begin{equation}\label{degc-case}
c = 2a - b - 2tu = \left\{
\begin{array}{l}
f-\frac{1}{f}-b\pm t\left(f+\frac{1}{f}\right), \\[6pt]
\frac{1}{f}-f-b\pm t\left(f+\frac{1}{f}\right). \\[6pt]
\end{array}
\right.
\end{equation}
This situation leads to the conclusion that $\deg(c) = \max\left\{\beta, \frac{\beta+\gamma}{2}\right\}$, which implies $\beta = \gamma$. The special case \(a=\pm i\) is obtained from the above parametrization by taking \(f=\pm i\).
\end{enumerate}
\item We now assume that at least one of $\deg(d_-)$ and $\alpha$ is $>0$. In both cases, it follows from (\ref{ocjenad-}) that $\gamma > \beta$. By summing equations (\ref{d_-2}) and (\ref{d_-3}), we obtain
\begin{equation}\label{d-}
d_- = c + sv + tu.
\end{equation}
From equation (\ref{jdbe_osnovne}), we deduce that $\deg(s) = \frac{\alpha + \gamma}{2}$ and $\deg(t) = \frac{\beta + \gamma}{2}$, while equation (\ref{adbdcd}) yields $\deg(u) = \frac{\alpha + \deg(d_-)}{2}$ and $\deg(v) = \frac{\beta + \deg(d_-)}{2}$. Combining these expressions with the estimate in (\ref{ocjenad-}), we conclude that $\deg(sv),\allowbreak\ \deg(tu) \leq \gamma$. The case where both $\deg(sv)$ and $\deg(tu)$ are strictly less than $\gamma$ leads to $\deg(d_-) < \gamma - \alpha - \beta$, and hence to a contradiction, as \eqref{d-} would imply $\deg(c) < \gamma$. Therefore, by \eqref{ocjenad-}, we must have $\deg(d_-) = \gamma - \alpha - \beta$.
\end{enumerate}
\end{proof}

\begin{remark}\label{fto-f}
It is unnecessary to consider both cases
\( a = \pm \tfrac{1}{2}\left(f - \tfrac{1}{f}\right) \), $f\in \mathbb{Q}(i)\backslash\{0\}$, as transformation \( f \mapsto -f \) produces the alternative case, thus covering all possibilities.
\end{remark}

Lemma \ref{lemma-d_-} implies some possible forms of Diophantine triples in $\mathbb{Q}(i)[X]$, which occur if $\beta=\gamma$.

\begin{remark}\label{Rem_dminus}
If $d_{-}=0$, then $c=a+b+2r$, $s=\pm(a+r)$ and $t=\pm(b+r)$ or $c=a+b-2r$, $s=\pm(a-r)$ and $t=\pm(b-r)$.\par
\end{remark}

\begin{remark}\label{remark3}
In the special case of (1) in Lemma \ref{lemma-d_-}, we get $\{a, b, c\}=\{\pm i, b,-b\pm 2i\}$. Similarly to \cite[Remark 3]{glavni}, it follows that 
\begin{equation}\{a, b, c\}=\{\pm i, \pm ti\pm i, \mp ti\pm i\},\label{pr1}\end{equation}where $t=-r^2=s^2.$ The triple of the form (\ref{pr1}) can exist in $\mathbb{Z}[i][X]$.
\end{remark}

\begin{remark}\label{rem-leading-coefficients}
Capital letters will be used to denote leading coefficients of
the corresponding non-zero polynomials. Thus, if $a,b,c,r,s,t\in\mathbb Q(i)[X],$
then \(A,B,C,R,S,T\in\mathbb Q(i)\) denote their leading coefficients,
respectively. If \(d_-\neq0\), we denote by \(D_-\) the leading coefficient of
\(d_-\).
\end{remark}

\begin{remark}\label{remark5}
The general part of the case (1) in Lemma \ref{lemma-d_-}, since $d_- = a$, $ab+1 = r^2$ and $ab+1 = v^2$, leads to $\pm r = bs \pm rt$, so $bs = r(\mp t \pm 1).$
By (\ref{jdbe_osnovne}), $\gcd(b, r) = 1$, thus $r = ps$, for $p\in\mathbb{Q}(i)$ and
\begin{equation}\label{degb-case}
b = \mp p t \pm p = \pm p(1-t).
\end{equation}

From (\ref{degc-case}) we can conclude certain relations that hold for the leading coefficients of the polynomials, namely we conclude $C = -B \pm \sqrt{B C} \left(f + \frac{1}{f}\right),$ where $B$ and $C$ are the leading coefficients of the polynomials $b$ and $c$, respectively, and $\sqrt{B C}\in\mathbb{Q}(i)$. From (\ref{degb-case}), we have $B = \mp p \sqrt{B C}$, i.e. $\sqrt{B}=\mp p\sqrt{C}.$
Combining the last two equations and expressing $p$, we get
\begin{equation}\label{p-f}
p^2+p\left(f + \frac{1}{f}\right)+1=0,
\end{equation}
from which we conclude that $\left(f + \frac{1}{f}\right)^2-4=q^2,$ for $q\in\mathbb{Q}(i)$. So, in this case we get $q = \pm \left( f - \frac{1}{f} \right)=\pm 2a$ and then, by (\ref{adbdcd}), $f + \frac{1}{f} =\pm 2u.$ The roots of the quadratic equation (\ref{p-f}) are  $p = -f \ \text{and} \ p = -\frac{1}{f}.$ 
Let us note that $p=\pm a \mp u$, $f=\pm a\pm u$. By (\ref{degb-case}), we obtain
\begin{equation}\label{a-a}
b \in \left\{\mp f(1-t), \ \mp\dfrac{1}{f}(1-t)\right\}, \qquad
c \in \left\{\pm \dfrac{1}{f}(t+1), \ \pm f(t+1)\right\},
\end{equation}
for $p = -f $ and $p = -\tfrac{1}{f}$, respectively.
However, it is unnecessary to consider both corresponding families of solutions for \(b\) and \(c\),
as one can be derived from the other via the substitution \( f \mapsto \tfrac{1}{f} \). Furthermore, the choice of signs $\mp$ and $\pm$ is strictly determined by the sign of the first element of the triple. The upper signs correspond to the starting element $-a$, while the lower signs correspond to our starting element $a$. Consequently, the further analysis may, without loss of generality, be restricted to the family matching $a$ for $p = -f$, i.e.
\[
b = f(1-t), \quad c = -\dfrac{1}{f}(t+1).
\]
Hence, it suffices to restrict attention to
\begin{equation}\label{one-triple}
\{a,b,c\} = \Big\{ \tfrac{1}{2}\left(f - \tfrac{1}{f}\right), \ b, \ \tfrac{b}{f^{2}} - \tfrac{2}{f} \Big\},
\end{equation}with $f\in\mathbb{Q}(i)\backslash\{0\}$. As noted, the transformation $f \mapsto -1/f$ produces the alternative valid extension for the same element $a$, while $f \mapsto -f$ naturally produces extensions for $-a$, covering all possibilities.
If we choose our combination of the signs, for example $$\{a,b,c\}=\Big\{\frac{1}{2}\left(f - \frac{1}{f}\right), \ f(1-t), \ -\frac{1}{f}(t+1)\Big\},$$ using (\ref{jdbe_osnovne}) and $r=fs$
we obtain $r=\pm\frac{\sqrt{-f^2 (t-1)+t+1}}{\sqrt{2}}$, $s=\pm\frac{\sqrt{-f^2 (t-1)+t+1}}{f\sqrt{2}}$. By plugging $-f^2 (t-1)+t+1=2k^2$ into expressions for $r$ and $s$, we get $r=\pm k$ and $s=\pm \frac{k}{f}$. Now we can choose $f\in\mathbb{Q}(i)\backslash\{0\}$ and $k\in\mathbb Q(i)[X]$ to get a polynomial Diophantine triple in $\mathbb{Q}(i)[X]$. 

For $f=2i$ and $k=X$, we have $\{a,b,c\}=\Big\{\frac{5i}{4}, -\frac{4}{5}i(X^2-1), \frac{1}{5}i(X^2+4)\Big\},$ where $r=X$, $s=-\frac{iX}{2}$ and $t=\frac{1}{5}(2X^2+3)$. For suitable irrational \(f\), one obtains polynomial Diophantine triples in \(\mathbb C[X]\). It is interesting that a suitable triple of this form can be extended to an irregular Diophantine quadruple in $\mathbb{C}[X]$ (see \cite{dij}).
\end{remark}

\begin{remark}
Regular $D(1)$-quadruples of the form $\{a, b, c, d_{\pm}\}$, where $d_{\pm}$ is defined by equation~\eqref{jdba_d_pm}, can be improper. According to~\eqref{jdba_d_pm}, we have
\begin{equation}\label{d_+}
\deg(d_+) = \alpha + \beta + \gamma.
\end{equation}
Furthermore, improper $D(1)$-quadruples of the form $\{0, a, b, c\}$ always exist.
\end{remark}

Since any $D(1)$-triple $\{a, b, c\}$ can be extended to a regular $D(1)$-quadruple $\{a, b, c,d\}$, where $d=d_{\pm}$, equation (\ref{jedn-z}), with $(v_m)_{m\geq 0}$ given by (\ref{rekurzija_vm}) and $(w_n)_{n\geq 0}$ given by (\ref{rekurzija_wn}), always has solutions.
If $d=d_-$ then we can easily prove what values $m$ and $n$ can have. 

Assume now that $\{a, b, c, d'\}$, with $\deg(d') = \delta$ and $\gamma \leq \delta$, is an irregular $D(1)$-quadruple with minimal $\delta$ among all irregular $D(1)$-quadruples in $\mathbb{Q}(i)[X]$. According to Lemma \ref{lema_supanj_delta}, we have
\begin{equation}
    \delta \geq \frac{3\beta+5\gamma}{2}.
\end{equation}

The proof of the next lemma is completely analogous to the proof of \cite[Lemma 3.4]{fij}, adapted to this case, so the proof can be omitted. In the proof, the minimality assumption on $\delta$ is used.
\begin{lem}\label{gap0}
Let $\{a, b, c\}$ be a $D(1)$-triple in $\mathbb{Q}(i)[X]$. Let $v_m=w_n$ and let $d = \dfrac{v_m^2-1}{c}$.
\begin{itemize}
    \item If $d = d_{-}$, then $m, n \in \{0, 1\}$. 
    \item If $d=d'$, then $m\geq 3$ and $n\geq 3$.
\end{itemize}
Moreover, the following holds: 
\begin{itemize}
    \item If $0 \in \{m, n\}$ then  $d=d_-$ or $d = 0 \neq 
 d_-$ or $d = a \neq d_-$.
 \item If $(m, n) = (1, 1)$, then $d=d_-$ or $d = 0 \neq d_-$ or $d = a \neq d_-$ or $d = d_+$ and $\gamma \geq \alpha + 2\beta$.
\end{itemize}
\end{lem}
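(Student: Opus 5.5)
The plan is to follow the strategy of \cite[Lemma 3.4]{fij}, using the degree formulas of Lemma~\ref{lema_stupnjevi} together with the classification of $d_-$ in Lemma~\ref{lemma-d_-}. For small indices the candidates $v_m$ are computed explicitly, and the minimality of $\delta$ is used to exclude an irregular $d'$ with small $(m,n)$.

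\textbf{First step: indices of $d_-$.} Since $\deg(d_-)=\gamma-\alpha-\beta$ (or $d_-=0$), and $cd_-+1=w^2$, we get $\deg(z)=\frac{\gamma+\deg(d_-)}{2}\leq \gamma-\frac{\alpha+\beta}{2}$. By Lemma~\ref{lema_stupnjevi}, $\deg(v_m)\geq (m-1)\frac{\alpha+\gamma}{2}+\frac{\gamma}{2}$ and similarly for $w_n$. Comparing these bounds forces $m,n\leq 1$.

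\textbf{Second step: the cases $0\in\{m,n\}$ and $(m,n)=(1,1)$.} Take, say, $m=0$. Then $z=z_0$ with $\deg(z_0)\le\frac{3\gamma-\alpha}{4}$, so $\delta=2\deg z_0-\gamma\le\frac{\gamma-\alpha}{2}<\gamma$. Hence $d$ is of small degree and is either $0$, or the constant $a$ (Lemma~\ref{lema_samo_jedan_konstantan}), or a genuine extension of degree below $\gamma$. In the last case, the bound $\delta\geq\frac{3\beta+5\gamma}{2}$ from Lemma~\ref{lema_supanj_delta} is violated unless $d=d_+$. Since $\deg d_+=\alpha+\beta+\gamma>\delta$, this leaves $d=d_-$. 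The case $n=0$ is symmetric, and in it $z_1$ plays the role of $z_0$.

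For $(m,n)=(1,1)$, write $sz_0+cx_0=tz_1+cy_1$. Reduce modulo $c$ using Lemma~\ref{initial1} and compare degrees, using $\deg v_1\le\frac{\alpha+5\gamma}{4}$. This gives $\delta\le\frac{\alpha+3\gamma}{2}<\frac{3\beta+5\gamma}{2}$, so $d$ is regular or degenerate. The alternative $d=d_+$ requires $\deg v_1=\frac{\alpha+\beta+2\gamma}{2}\le\frac{\beta+5\gamma}{4}$, which yields $\gamma\ge\alpha+2\beta$.

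\textbf{Third step: the irregular $d'$.} From the above, $d'$ cannot occur with $m\le1$ or $n\le1$, and also not with $(m,n)=(1,1)$. It remains to exclude $m=2$ or $n=2$. By Lemma~\ref{degineq}, $n-1\le m\le 2n+1$, so only finitely many small pairs are left, such as $(2,1),(2,2),(1,2),(2,3),(3,2)$. For $m=2$, $v_2=2sv_1-z_0$, and $\deg v_2\le\frac{\alpha+\gamma}{2}+\frac{\alpha+5\gamma}{4}$ gives $\delta=2\deg v_2-\gamma\le \frac{3\alpha+5\gamma}{2}$. This contradicts $\delta\ge\frac{3\beta+5\gamma}{2}$ unless $\alpha=\beta$ and equality holds throughout.

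The equality case is the main obstacle. There I would use the congruences $v_2\equiv z_0$ and $w_n\equiv z_1$ or $w_1 \pmod c$ to force $z_0=\pm z_1$ or a similar relation. Then I would show that the resulting $d$ is a regular extension of a triple of smaller degree, so that minimality of $\delta$ produces a contradiction. The same procedure handles $n=2$.
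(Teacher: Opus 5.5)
\textbf{Verdict: there is a genuine gap in your third step, and a misapplied lemma in your second step.}

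Your first step is correct. So are the degree reductions in your third step: $d'$ cannot arise when $m\le1$ or $n\le1$, and $m=2$ forces $\alpha=\beta$, $\deg v_1=\frac{\alpha+5\gamma}{4}$ and $\delta=\frac{3\beta+5\gamma}{2}$. The gap is the equality case. This is exactly where the content of ``$m\ge3$ and $n\ge3$'' lies, and you leave it as a plan. For $n=2$ the equality case reads $\deg w_1=\frac{\beta+5\gamma}{4}$, with no condition $\alpha=\beta$, so it is not ``the same procedure''.

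Degree counting cannot settle this case. The condition $\deg w_1=\frac{\beta+5\gamma}{4}$ forces $\deg z_1=\frac{3\gamma-\beta}{4}$. From $w_1w_{-1}=z_1^2+c(b-c)$ one then gets, for $\beta<\gamma$, $\deg w_{-1}=\frac{3\gamma-\beta}{4}=\deg z_1$. Thus $w_{-1}$ is an equally admissible fundamental solution, and with respect to it the same $z=w_2$ has index $3$. The same happens for $m=2$, via $v_1v_{-1}=z_0^2+c(a-c)$. So you must either exclude this configuration outright, or fix and justify a choice of fundamental solution; your sketch does neither.

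Your proposed step, ``the resulting $d$ is a regular extension of a triple of smaller degree, so minimality gives a contradiction'', is not an argument. By $(sz\pm cx)^2-1=c(a+c+d+2acd\pm2sxz)$ and its analogue with $b,t,y$, every term of either recurrence is a regular extension of the triple built from the preceding term. For example, $\{b,c,d^{(1)},d'\}$ with $d^{(1)}=(w_1^2-1)/c$ is always regular. Minimality of $\delta$ only forbids \emph{irregular} quadruples of smaller maximal degree.

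What the congruences modulo $c$ actually give is $z_0=z_1$ for even $m$ (both have degree $<\gamma$), or $z_1\equiv sz_0\pmod{c}$ for odd $m$. In the first case, minimality applied to $\{a,b,c,d_0\}$ only identifies $z_0=z_1$ among $\pm1$, $\pm s$, $\pm(cr\pm st)$. One then still needs a computation modulo $c^2$, as in Lemma~\ref{Lm8}, to exclude index $2$. The second case requires a case analysis like that of Lemma~\ref{congr_1}. None of this is carried out.

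Your second step misuses Lemma~\ref{lema_supanj_delta}. For $m=0$ you have $\deg d<\gamma$, whereas that lemma concerns the element of largest degree. Applied to the reordered quadruple, it only gives a lower bound for $\gamma$ or that $c$ is the large regular extension of the other three, and the first alternative is not excluded. Moreover, ``violated unless $d=d_+$'' would give a contradiction for $d\ne d_+$, not $d=d_-$.

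The correct argument, and the place where minimality of $\delta$ enters, is this. If $d\notin\{0,a\}$, then $\{a,b,c,d\}$ is a genuine quadruple with maximal degree $\gamma<\delta$, hence regular. So $d\in\{d_+,d_-\}$, and $\deg d<\gamma<\deg d_+$ gives $d=d_-$. The same argument is needed for $(m,n)=(1,1)$ when $\deg d<\gamma$.

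Finally, in the $(1,1)$ case the inequality $\frac{\alpha+\beta+2\gamma}{2}\le\frac{\beta+5\gamma}{4}$ only gives $\gamma\ge2\alpha+\beta$. To obtain $\gamma\ge\alpha+2\beta$ you must use $\deg v_1\le\frac{\alpha+5\gamma}{4}$.
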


The following lemma gives a more precise description of all the possible relations between degrees of the polynomials in $D(1)$-triple. Let us note that conclusions about degrees and the leading coefficients generally hold, but conclusions about initial terms hold only for equations $v_m=w_n$ from which the solution $d = d_{-}$ arises. This lemma is an     appropriate analogue of \cite[Lemma 3.5]{fij} and \cite[Lemma 5]{glavni}. One difference from $\mathbb{R}[X]$ is that in $\mathbb{Q}(i)[X]$ we are not able to precisely determine the signs $\pm$, so all combinations of the signs are possible. Also, in $\mathbb{R}[X]$ we have more information about $d_0$ and consequently about $z_0$ and $x_0$. On the other hand, in $\mathbb{Z}[i][X]$, some of the possibilities for \(d_-\) occurring here do not arise.

\begin{lem}\label{od-}
Let $\{a, b, c\}$ be a $D(1)$-triple in $\mathbb{Q}(i)[X]$. The following cases can occur:
\begin{enumerate}
    \item If $d_-=0$, then $z_0 = z_1 = \pm 1$. In this case $c = a + b \pm 2r$ and $\beta = \gamma$. If $\alpha < \beta$, then $C = B$ and, if $\alpha = \beta$, then $C = A + B \pm 2\sqrt{AB}$.
    \item For $\deg(d_-) = 0$, one of the following options can occur:
    \begin{enumerate}
   \item  If $d_- = a$, then $(z_0, z_1) = (\pm s, \pm s)$, $\alpha = 0,$  and  $\beta = \gamma$.
    \item If $d_- \in \mathbb{Q}(i)\backslash\{0, a\}$, then $z_0 = z_1 = \pm cr \pm st$, $\alpha > 0$, $\gamma = \alpha + \beta$ and $C=4ABD_-$.
    \end{enumerate}    
     \item If $\deg(d_-)>0$, then $C=4ABD_-$ and the following possibilities occur:
    \begin{enumerate}
    \item $z_0 = z_1 = \pm cr \pm st$, with $\deg(d_-) \leq \alpha, \ \alpha > 0$ and $\alpha + \beta < \gamma \leq 2\alpha + \beta$,
    \item $(z_0, z_1) = (\pm cr \pm st, \pm s),$ where $\alpha \leq \deg(d_-) \leq \beta$, $\alpha \geq 0$ and $2\alpha + \beta \leq \gamma \leq \alpha + 2\beta,$
    \item $(z_0, z_1) = (\pm t, \pm cr \pm st)$, with $\deg(d_-) = \alpha$, $\alpha = \beta$ and $\gamma = 3\alpha$,
    \item $(z_0, z_1) = (\pm t, \pm s)$, where $\beta \leq \deg(d_-) < \gamma$, $\alpha \geq 0$ and $\gamma \geq \alpha + 2\beta$.
    \end{enumerate}
\end{enumerate}
\end{lem}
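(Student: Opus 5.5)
The plan is to take the regular extension $d=d_-$ as the target and use Lemma~\ref{gap0}, which says that $d_-$ arises from an equation $v_m=w_n$ with $m,n\in\{0,1\}$. So $z=w\ (=cr\pm st$ up to sign$)$ must equal one of $v_0=z_0$, $v_1=sz_0\pm cx_0$, and also one of $w_0=z_1$, $w_1=tz_1\pm cy_1$. Conversely, $d_0,d_1$ from \eqref{d0}--\eqref{d1} are the ``minimal'' extensions. In each case I would identify $z_0,z_1$ by comparing degrees: $\deg(w)=\frac{\gamma+\deg(d_-)}{2}$ against the bounds in Lemma~\ref{prvi_deg} and \eqref{degv1}, \eqref{degw1}. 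The degree and leading-coefficient statements would be derived independently of the sequences, from Lemma~\ref{lemma-d_-} and from comparing leading terms in \eqref{dminus}.

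\textbf{Case $d_-=0$.} Here $cd_-+1=1$, so $z=\pm1$. Since $\deg(v_1)\ge\gamma/2>0$, this forces $m=n=0$ and $z_0=z_1=\pm1$. Remark~\ref{Rem_dminus} gives $c=a+b\pm2r$, and then $\deg c\le\beta$ together with $\gamma\ge\beta$ gives $\beta=\gamma$. For the leading coefficients, if $\alpha<\beta$ the term $b$ dominates ($\deg r=(\alpha+\beta)/2<\beta$), so $C=B$. If $\alpha=\beta$, then $R=\pm\sqrt{AB}$ and $C=A+B\pm2\sqrt{AB}$.

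\textbf{Case $d_-$ constant and nonzero.} By Lemma~\ref{lemma-d_-} there are two subcases. If $d_-=a$, then $u^2=ad_-+1=a^2+1$ is constant, and Remark~\ref{remark5} gives $r=ps$ and $w=cr\pm st=\pm s$ up to a constant. Since $\deg s=\gamma/2$ is minimal possible by the Remark after \eqref{d1}, we get $z_0=z_1=\pm s$. Otherwise $\gamma=\alpha+\beta$ and $\deg w=\gamma/2$, which again is minimal, so $z_0=z_1=w$. The identity $C=4ABD_-$ comes from the leading terms of \eqref{dminus}: we need $c$ and $2abc\pm2rst$ to cancel to degree $\le\gamma-\alpha-\beta$. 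Equivalently, the leading coefficient of $w^2-st$ expansions yields $(cr\pm st)(cr\mp st)=c^2r^2-s^2t^2=c^2-c(a+b)-1$. Comparing this with $d_-\cdot(\text{the other sign } d_+)$, via $d_+d_-=(a+b+c)^2-\dots$, gives $D_+D_-$ in terms of $C^2$, and with $D_+=4ABC$ from \eqref{d_+} we obtain $C=4ABD_-$.

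\textbf{Case $\deg(d_-)>0$.} This is the main case and the main obstacle. Set $\varepsilon=\deg(d_-)=\gamma-\alpha-\beta$ and $\deg w=(\gamma+\varepsilon)/2$. The candidate small solutions of \eqref{jdba_pellova_prva} are $\pm1$, $\pm t$ (coming from $d=b$, since $ab+1=r^2$ and $cb+1=t^2$), and $\pm w$ (coming from $d_-$). For $z_0$, either $w$ itself satisfies the bound $\deg z_0\le(3\gamma-\alpha)/4$, i.e. $\varepsilon\le(\gamma-\alpha)/2$, or $w=v_1=sz_0\pm cx_0$ with $z_0$ of smaller degree. In the latter case I would show that $z_0$ corresponds to $d=b$, hence $z_0=\pm t$, using the relation \eqref{d_-2}, $d_-=a-b+c+2sv$, which exhibits $w$ as the $s$-step image of $t$. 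The same reasoning for $z_1$ uses \eqref{d_-3}, so that $z_1=\pm s$ corresponds to $d=a$. Translating the threshold conditions $\deg(d_-)\le\alpha$, resp. $\le\beta$, into $\gamma\le2\alpha+\beta$, resp. $\gamma\le\alpha+2\beta$, gives the four combinations (a)--(d). For (c) one checks that $z_0=\pm t$ together with $z_1=w$ forces $\varepsilon\ge\beta$ and $\varepsilon\le\alpha$, hence $\alpha=\beta$ and $\gamma=3\alpha$.

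The delicate point is uniqueness: proving that the minimal-degree $z_0$ is exactly $\pm t$ (resp. $\pm w$), and not some other solution of equal degree. For this I would use that two solutions in the same class of degree below $\deg v_1$ must coincide up to sign, argued as in \cite[Lemma 3.5]{fij}. Signs stay undetermined over $\mathbb{Q}(i)$, which is why they are kept as $\pm$ throughout.
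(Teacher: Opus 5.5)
Your skeleton matches the paper's. Lemma~\ref{gap0} places $d_-$ at $v_m=w_n=\pm w$ with $m,n\in\{0,1\}$, the four subcases of part 3 are the four pairs $(m,n)$, and the degree conditions come from Lemma~\ref{prvi_deg}. Your case $d_-=0$ is fine. Your derivation of $C=4ABD_-$ is a correct and cleaner alternative to the paper's degree comparison in $d_-=c+sv+tu$: use $d_+d_-=a^2+b^2+c^2-2ab-2ac-2bc-4$, whose leading coefficient is $C^2$ since $\gamma>\beta$, together with $D_+=4ABC$.

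The gap is the step you postpone, which is the real content of the lemma. If $w=v_1=sz_0+cx_0$, why is $(z_0,x_0)=(\pm t,\pm r)$? Symmetrically, why is $(z_1,y_1)=(\pm s,\pm r)$ when $w=w_1$? Knowing that $w$ is the one-step image of $\pm t$ does not by itself give $z_0=\pm t$; the relevant identity is simply $w=s(\pm t)+cr$ from \eqref{uvw-minus}, not \eqref{d_-2}. The uniqueness principle you attribute to \cite{fij} is neither proved nor among the lemmas available here. The paper closes this in two lines. From $sz_0+cx_0=\pm st+cr$ one gets $c(r-x_0)=s(z_0\mp t)$, so $s\mid r-x_0$ because $\gcd(c,s)=1$. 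Since $\deg x_0\le\frac{\alpha+\gamma}{4}<\deg s$ and $\deg r<\deg s$ (as $\beta<\gamma$), this forces $x_0=r$ and $z_0=\pm t$. Equivalently, step back once: $z_0=sw\mp cu$ with $u^2=ad_-+1$, which gives $\pm t$ for one sign and a polynomial of degree larger than $\deg w$ for the other. Also, ``$w$ satisfies the bound of Lemma~\ref{prvi_deg}'' does not make $w$ fundamental; the dichotomy has to be $m=0$ versus $m=1$.

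In part 2 the inference ``$\deg w=\gamma/2$ is minimal, hence $z_0=z_1=w$'' fails as stated. If $w=v_1$, minimality only gives $\deg z_0=\gamma/2=\deg v_1$, and Lemma~\ref{lema_stupnjevi} does not exclude $\deg v_1=\deg v_0$. Such equalities really occur: for $d_-=a$, taking $z_1=s$, $y_1=r$, the normalization of Remark~\ref{remark5} gives $w_1=ts+cr=-s$. The shifted indices must therefore be excluded by computation.
\begin{itemize}
\item For $d_-=a$ and $m=1$: $sz_0+cx_0=\pm s$ gives $cx_0=s(\pm1-z_0)$, hence $s\mid x_0$. So $x_0=0$, $z_0=\pm1$, and \eqref{jdba_pellova_prva} yields $c=0$. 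For $n=1$ the same divisibility with $t$ still returns $z_1=\pm s$.
\item For $d_-\notin\{0,a\}$: the divisibility argument above shows that a shifted index forces $z_0=\pm t$ or $z_1=\pm s$. These have degrees $\frac{\beta+\gamma}{2}$ and $\frac{\alpha+\gamma}{2}$, both exceeding $\deg w=\frac{\gamma}{2}$, contradicting $\deg v_0\le\deg v_1$ (resp.\ $\deg w_0\le\deg w_1$).
\end{itemize}
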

\begin{proof}
\begin{enumerate}
    \item For $d_-=0$, we get $u,v,w \in \{\pm 1\}$ from (\ref{adbdcd}), which leads us to the conclusion that $c = a + b \pm 2r$ according to (\ref{c-d_-}). Additionally, from $\gamma \leq \beta$ we get $\gamma = \beta$.
    \begin{itemize}
    \item If $\alpha < \beta$, then $C = B$.
    \item If $\alpha = \beta$, then $C = A + B \pm 2\sqrt{AB}$.
    \end{itemize}
    \item 
    \begin{enumerate}
    \item Let $d_- = a \in\mathbb{Q}(i)\backslash\{0\}$. In this case, $\alpha = 0$ and $\beta = \gamma$. By Lemma \ref{lemma-d_-} and Remark \ref{remark5}, 
    $a = \frac{1}{2}\left(f-\frac{1}{f}\right)$, $f \in \mathbb{Q}(i)\backslash\{0\},$  while $c = \pm \frac{1}{f}(t+1)$. 
    From \(cd_-+1=w^2\), we obtain \(w=\pm s\). Hence, by Lemma~\ref{gap0}, the value \(d_-\) can arise only from an equality
\(v_m=w_n=\pm s\), with \(m,n\in\{0,1\}\).
    In case $(m, n) = (0, 0)$, from (\ref{rekurzija_vm}) and (\ref{rekurzija_wn}), we get $z_0 = z_1 = \pm s$. For $(m, n) = (0, 1)$, from (\ref{rekurzija_vm}) and (\ref{rekurzija_wn}), we get 
    \begin{equation}\label{44}
        z_0 = tz_1 + cy_1 = \pm s.
    \end{equation}
     From (\ref{uvw-minus}) and (\ref{44}), we get $\pm cr \pm st = tz_1 + cy_1 = \pm s,$ so
        \begin{equation}\label{lemma2a1}
    c(\pm r - y_1) = t(z_1 \mp s).
    \end{equation}
    Since $(c,t)=1$, we have $t \mid (\pm r - y_1)$. Considering degrees of the polynomials we conclude $y_1 = \pm r$ and $z_1 = \pm s$. For $(m, n) = (1, 0)$ we get $sz_0 + cx_0 = z_1 = \pm s$, or $cx_0 = s(\pm 1 - z_0)$. We also have $\pm cr \pm st = sz_0 + cx_0$, i.e. $c(\pm r-x_0)=s(z_0\pm t)$ and, by Remark \ref{remark5}, $s\mid r$. Since $(c,s)=1$, it follows that $s\mid x_0$, which is possible only for $x_0=0$. For $x_0 = 0$, we get $z_0 = \pm 1$, which is a contradiction. In case $(m, n) = (1, 1)$, we get $sz_0 + cx_0 = tz_1 + cy_1 = \pm s$ which again leads to a contradiction.
    \item Now, $d_-\in\mathbb{Q}(i)\backslash\{0, a\}$. By Lemma \ref{lemma-d_-}, $\alpha>0$ and $\gamma = \alpha+\beta$. From $cd_- + 1 = w^2$, we conclude $w = \pm cr \pm st \neq \pm s$. By Lemma \ref{gap0}, we get $v_m = w_n = w$ for $m, n \in \{0, 1\}$. In case $(m, n) = (0, 0)$, according to (\ref{rekurzija_vm}) and (\ref{rekurzija_wn}), we get $z_0 = z_1 = \pm cr \pm st$. In cases $(m, n) \in \{(0, 1), (1, 0), (1, 1)\}$ we obtain a contradiction by arguments analogous to those used in Case 2.(a). By (\ref{adbdcd}) and Lemma \ref{lemma-d_-}, we have $\deg(c)=\deg(sv)=\deg(tu)=\gamma$ and we can also conclude that $C=4ABD_-$, where capital letters denote the leading coefficients of the corresponding polynomials.
    \end{enumerate}
    \item In case $\deg(d_-) > 0$, by Lemma \ref{lemma-d_-} we conclude $\gamma > \alpha + \beta$. From Lemma \ref{gap0} we get that if $d=d_-$, then $v_m = w_n = \pm w$ with $m, n \in \{0, 1\}$, so $d_-$ occurs from $v_m = w_n = \pm cr \pm st$ for $m, n \in \{0, 1\}$. Additionally, by Lemma \ref{lemma-d_-} and $cd_{-}+1= w_{-}^2$, we obtain
\begin{equation}\label{degw-}
\deg(w_{-}) = \gamma - \frac{\alpha+\beta}{2} < \gamma.
\end{equation} Similarly to case 2.(b), we conclude $C=4ABD_-$.
    \begin{enumerate}    
    \item If $(m, n) = (0, 0)$, we have $z_0 = z_1 = \pm cr \pm st$. By (\ref{degw-}) and (\ref{ineq20}) we get $\gamma \leq 2\alpha + \beta$. Moreover, by Lemma \ref{lemma-d_-} we obtain $\deg(d_-) \leq \alpha$, so $\alpha > 0$.
    \item If $(m, n) = (0, 1)$, then we have $z_0 = tz_1 + cy_1 = \pm cr \pm st$, so we obtain (\ref{lemma2a1}) and conclude $t \mid (\pm r - y_1)$. As before, $y_1 = \pm r$ and $z_1 = \pm s$. By (\ref{degw-}) and (\ref{deg1}) we get $\gamma \leq \alpha + 2\beta$. So, Lemma \ref{lemma-d_-} yields the inequality $\deg(d_-) \leq \beta$. Using (\ref{deg1}) and (\ref{jdbe_osnovne}) we obtain $\gamma \geq 2\alpha + \beta$. Again, by Lemma \ref{lemma-d_-}, $\deg(d_-)\geq \alpha$.
    \item If $(m, n) = (1, 0)$, similarly to the previous case, we get $x_0 = \pm r, \ z_0 = \pm t$ and $z_1 = \pm cr \pm st$. Additionally, we obtain $\alpha + 2\beta \leq \gamma \leq 2\alpha + \beta$, from which we get $\alpha = \beta$ and consequently $\gamma = 3\alpha$.
    \item For $(m, n) = (1, 1)$, using results from cases before, we obtain $z_0 = \pm t$ and $z_1 = \pm s$. Conclusion about degrees follows from (\ref{deg1}) and (\ref{ineq20}). Lemma \ref{lemma-d_-} leads us to the conclusion $\deg(d_-)\geq \beta$.
    \end{enumerate}
\end{enumerate}
\end{proof}

The following lemma is motivated by observing \cite[Lemma 10]{dif2} for $\mathbb{Q}(i)[X]$.
\begin{lem}
\label{Lm-ubaceno_novo}Let $\{a,b,c\}$ be a $D(1)$-triple in $\mathbb{Q}(i)[X]$ with $\beta< \gamma= \alpha+2 \beta.$ Then the triple \(\{a,b,d_-\}\) is either of the form \eqref{one-triple} or is regular, that is, $d_-=a+b\pm2r.$
\end{lem}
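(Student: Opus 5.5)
Since $\beta<\gamma=\alpha+2\beta$, we have $\gamma>\alpha+\beta$, so we are in the case $\deg(d_-)>0$ of Lemma \ref{lemma-d_-}. In particular $\deg(d_-)=\gamma-\alpha-\beta=\beta$. The idea is to study the triple $\{a,b,d_-\}$ directly and show that its own smaller regular extension has to be $0$ or $a$. By Lemma \ref{d-forms}, the triple $\{a,b,d_-\}$ is a Diophantine triple with
\[
ab+1=r^2,\quad ad_-+1=u^2,\quad bd_-+1=v^2 .
\]
By \eqref{c-d_-}, $c$ is one of its two regular extensions. Moreover
\[
\deg(c)=\gamma=\alpha+2\beta=\alpha+\beta+\deg(d_-),
\]
so by \eqref{d_+} the polynomial $c$ is the larger extension $d_+$ of $\{a,b,d_-\}$. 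The other regular extension of $\{a,b,d_-\}$ is
\[
e:=a+b+d_-+2(abd_-\mp ruv).
\]

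Apply Lemma \ref{lemma-d_-} to $\{a,b,d_-\}$, whose elements have degrees $\alpha\le\beta=\deg(d_-)$. The third and second degrees coincide, so only three outcomes are possible. Either $e=0$. Or $\deg(e)=\beta-\alpha-\beta=-\alpha$, which forces $\alpha=0$ and $e$ constant. Or $\alpha=0$ and $\deg(e)=0$ via case (1), where $e$ equals the constant element $a$.

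\begin{itemize}
\item If $e=0$, Remark \ref{Rem_dminus} applied to $\{a,b,d_-\}$ gives $d_-=a+b\pm 2r$, so the triple is regular.
\item If $e\neq 0$, then $\deg(e)=0$ and $\alpha=0$. Lemma \ref{lema_samo_jedan_konstantan} says at most one constant can occur in the improper quadruple $\{a,b,d_-,e\}$. Since $e$ and $a$ are both constant, $e=a$. This is exactly case (1) of Lemma \ref{lemma-d_-} for $\{a,b,d_-\}$, and Remark \ref{remark5} then puts $\{a,b,d_-\}$ in the form \eqref{one-triple}, after possibly relabelling $b$ and $d_-$, which have equal degree.
\end{itemize}

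The subtle point is the second case. Lemma \ref{lema_samo_jedan_konstantan} is stated for genuine tuples, so I must check that $\{a,b,d_-,e\}$ is a genuine Diophantine quadruple in $\mathbb{C}[X]$. By Lemma \ref{d-forms} applied to $\{a,b,d_-\}$, all pairwise products plus one are squares. It remains to see that $e$ is distinct from $b$ and $d_-$; this holds because those are non-constant while $e$ is constant. The lemma then forces $e=a$.

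I must also check that Remark \ref{remark5} applies with the roles of $b$ and $c$ played by $b$ and $d_-$. Its derivation only used $d_-=a$ for the triple under consideration and $\alpha=0$, which is exactly our situation for $\{a,b,d_-\}$ with extension $e=a$.

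Finally, the sign in $e$ must be the one opposite to the sign producing $c$. This follows from the symmetric roles in \eqref{c-d_-}: the two regular extensions of $\{a,b,d_-\}$ multiply their defining relation to give the triple $\{a,b,d_-\}$ as the smaller extension of $\{a,b,c\}$ and vice versa.
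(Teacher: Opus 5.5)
Your argument is correct and is essentially the paper's proof. Since $\beta<\gamma$ forces $d_-\neq 0$ (Remark \ref{Rem_dminus}), you get $\deg(d_-)=\gamma-\alpha-\beta=\beta$, so the triple $\{a,b,d_-\}$ has its two largest degrees equal. Lemma \ref{lemma-d_-} applied to that triple then leaves only the regular case or case (1), and Remark \ref{remark5} puts case (1) in the form \eqref{one-triple}. You spell out two steps the paper leaves tacit: that $c$ is the larger regular extension of $\{a,b,d_-\}$, and that Lemma \ref{lema_samo_jedan_konstantan} forces the smaller one to equal $a$ when it is nonzero. The paper, for its part, additionally records the resulting shape of $c$ in each case.
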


\begin{proof}
 For the $D(1)$-triple $\{a,b,c\}$, by Lemma \ref{lemma-d_-}, we conclude $\rm{deg}(\textit{d}_{-})=\beta$. Hence, the $D(1)$-triple $\{a,b,d_{-}\}$ is either regular or it has the form (1) from Lemma \ref{lemma-d_-}.
 
For the regular triple $\{a,b,d_{-}\}$, by Definition \ref{expc}, $d_{-}=a+b\pm2r$ holds. Similarly to \cite[Lemma 10]{dif2}, we get $c=4r(a\pm r)(b\pm r)$.

If the triple $\{a,b,d_{-}\}$ has the form (1) from Lemma \ref{lemma-d_-}, by Remark \ref{remark5}, we have $a=\frac{1}{2}\Big(f-\frac{1}{f}\Big)$, where $f\in\mathbb{Q}(i)\backslash\{0\}$, $b = f(1-v)$ and $d_- = -\frac{1}{f}(v+1)$. By simple algebraic operations, from these expressions, it follows $d_- =\frac{1}{f^2}b - \frac{2}{f}$. We conclude that $u=-\frac{r}{f}$ and $v=1-\frac{b}{f}$. Using (\ref{c-d_-}), an expression $c=c_1b^2+c_2b+c_3$, with $c_i\in\mathbb{Q}(i)$ for $i=1,2,3$, can be obtained for a certain value of $f$.
\end{proof} 
 
By Lemma \ref{lemma-d_-} and Lemma \ref{od-}, we  directly obtain the following result.

\begin{lem}\label{lemma-3.7-param}
  Let $\{a, b, c\}$ be a $D(1)$-triple in $\mathbb{Q}(i)[X]$ with $\beta < \gamma = 2\beta$. Assume that the constant \(a\) is given by $a = \frac{1}{2}\left(f - \frac{1}{f}\right)$ for some $f \in \mathbb{Q}(i) \setminus \{0, \pm 1\}$. In this case, $\{a, b, d_-, c\}$ is one of the following:
  \begin{enumerate}
      \item $\{a, \ b, \ a+b+ 2r, \ 4r(a+ r)(b+ r)\}$ and $s=\pm(2a(b+r)+1)$, $t=\pm (2b(a+r)+1)$,
      \item $\{a, \ b, \ a+b-2r,  \ 4r(r- a)(b- r)\}$ and $s=\pm(2a(b-r)+1)$, $t=\pm(2b(r-a)-1)$,
      \item $\left\{a, \ b, \ \frac{b}{f^2} - \frac{2}{f}, \ \frac{2(f^2-1)}{f^3}b^2 + 2\left(\frac{3}{f^2}-1\right)b + \frac{f^2-9}{2f}\right\}$, with the corresponding roots $s = \pm \left( b\frac{1-f^2}{f^2} + \frac{f^2-3}{2f} \right)=\pm(-2\frac{r^2}{f}+\frac{1}{f}+a)$ and $t = \pm r\left(1 - \frac{2b}{f}\right)$.
  \end{enumerate}
\end{lem}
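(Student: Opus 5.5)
The hypothesis $\beta<\gamma=2\beta$ together with Lemma~\ref{lemma-d_-} forces $\alpha=0$. Indeed, if $d_-=0$, Lemma~\ref{od-}(1) gives $\beta=\gamma$, which is impossible. Otherwise $\deg(d_-)=\gamma-\alpha-\beta=\beta-\alpha$, and case (1) of Lemma~\ref{lemma-d_-} (with $\beta=\gamma$) is excluded. So we are in case (2). If $\alpha>0$, then $\gamma\ge\alpha+\beta$ is still allowed, so I would instead use the statement's assumption that $a$ is the constant element $\frac12(f-\frac1f)$. This gives $\alpha=0$, hence $\gamma=\alpha+2\beta$, so Lemma~\ref{Lm-ubaceno_novo} applies. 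The plan is to apply that lemma to the triple $\{a,b,c\}$. Its conclusion is that $\{a,b,d_-\}$ is either regular, so $d_-=a+b\pm 2r$, or of the form \eqref{one-triple}, so $d_-=\frac{b}{f^2}-\frac2f$.

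In the regular case I would write $d_-=a+b\pm 2r$. Then $\{a,b,d_-\}$ is a triple with roots $a\pm r$ and $b\pm r$ by \eqref{jdba_c_regularni1}. Next I would use \eqref{c-d_-}, which expresses $c$ as the regular extension $d_+$ of the triple $\{a,b,d_-\}$:
\[
c=a+b+d_-+2(abd_-\pm r(a\pm r)(b\pm r)).
\]
Expanding with $r^2=ab+1$ collapses this to $4r(r\pm a)(b\pm r)$, as in the remark containing \eqref{jdba_par_do_cetvorke}. This is the same computation already cited from \cite[Lemma 10]{dif2}, and it yields cases (1) and (2). The formulas for $s$ and $t$ then follow by direct verification. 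For example, in case (1),
\[
ac+1=4ar(a+r)(b+r)+1=(2a(b+r)+1)^2 ,
\]
which one checks using $r^2=ab+1$, so that $r(a+r)=a(b+r)+1$. The computation for $t$ is symmetric. Case (2) is obtained by the substitution $r\mapsto -r$, which explains the sign changes.

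In the non-regular case, Remark~\ref{remark5}, applied to the triple $\{a,b,d_-\}$, gives $b=f(1-v)$, $d_-=-\frac1f(v+1)$, $u=-\frac rf$ and $v=1-\frac bf$. I would substitute these into \eqref{c-d_-}:
\[
c=a+b+d_-+2(abd_-\pm ruv).
\]
Then I would expand $ruv=-\frac{r^2}{f}\bigl(1-\frac bf\bigr)$ and replace $r^2=ab+1$ with $a=\frac12(f-\frac1f)$. For the correct sign choice this yields the quadratic in $b$ stated in (3). The roots $s$ and $t$ are then found by checking that $ac+1$ and $bc+1$ are the claimed squares. For $t$, note that $bc+1=r^2(1-2b/f)^2$, and the identity $-2r^2/f+1/f+a=b\frac{1-f^2}{f^2}+\frac{f^2-3}{2f}$ is just $r^2=ab+1$ again.

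I expect the main obstacle to be the sign bookkeeping in this third case. The $\pm$ in \eqref{c-d_-} must be fixed so that $\deg c=2\beta$ and not a lower degree. Then the constant term $\frac{f^2-9}{2f}$ and the coefficient $\frac{2(f^2-1)}{f^3}$ must come out exactly. I would handle this by using $\deg(c)=2\beta$ to choose the sign, then comparing leading coefficients via $C=4ABD_-$ from Lemma~\ref{od-}. I also need to check that the exclusion $f\neq\pm1$, which means $a\neq0$, keeps all four elements distinct.
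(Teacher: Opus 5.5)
Your proposal is correct and follows essentially the paper's route: the paper only says the lemma follows directly from Lemmas~\ref{lemma-d_-} and~\ref{od-}, with the actual computation sketched in the proof of Lemma~\ref{Lm-ubaceno_novo} (regular case $c=4r(a\pm r)(b\pm r)$; non-regular case via Remark~\ref{remark5} and \eqref{c-d_-}), and you carry this out explicitly and correctly, including the sign choice in \eqref{c-d_-}, where one sign returns $c=a$ and the other gives the stated quadratic. The only slip is your opening claim that $\beta<\gamma=2\beta$ alone forces $\alpha=0$, which is false because $\alpha>0$ with $\gamma\ge\alpha+\beta$ is not excluded; $\alpha=0$ comes from the hypothesis that $a$ is constant, which is what you end up using.
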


\section{All possible fundamental solutions}
The main goal of this section is to determine all possible fundamental solutions $(z_0,x_0)$ and $(z_1,y_1)$ of equations (\ref{jdba_pellova_prva}) and (\ref{jdba_pellova_druga}), respectively. The next lemma gives us the relations between the initial terms $z_0, z_1, x_0, y_1 \in\mathbb{Q}(i)[X]$. Since the proof is completely analogous to \cite[Lemma 3]{dij}, and to its more precise version \cite[Lemma 7]{fij} for $\mathbb{R}[X]$, the proof is omitted here. The only difference is that over \(\mathbb{Q}(i)[X]\) we cannot obtain
precise conclusions about the signs \(\pm\), as is possible over
\(\mathbb{R}[X]\).

\begin{lem}\label{congr_1}\hspace{0pt}
\begin{enumerate}
    \item If $v_{2m}=w_{2n}$, then $z_0 = z_1$. 
    \item If $v_{2m+1} = w_{2n}$, then either $(z_0, z_1) = (\pm 1, \pm s)$ or $(z_0, z_1) = (\pm s, \pm 1)$ or $z_1 = cx_0+sz_0$, or $z_1 = cx_0-sz_0$, where $x_0$ is not a constant.
    \item If $v_{2m} = w_{2n+1}$, then either $(z_0, z_1) = (\pm t, \pm1)$ or $(z_0, z_1) = (\pm s, \pm 1)$ or $(z_0, z_1) = (\pm 1, \pm 1)$ or $z_0 = tz_1 + cy_1$ or $z_0 = tz_1-cy_1,$ where $y_1$ is not a constant.
    \item If $v_{2m+1} = w_{2n+1}$, then either $(z_0, z_1) = (\pm 1, \pm cr\pm st )$ or $(z_0, z_1) = (\pm cr \pm st, \pm 1) $ or $sz_0 + cx_0 = tz_1 \pm cy_1$ or $sz_0 - cx_0 = tz_1 \pm cy_1$, where $x_0, y_1$ are not constant polynomials and polynomials on both sides of the equation have degree less than $\gamma$.
\end{enumerate}
\end{lem}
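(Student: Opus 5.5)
The plan is to reduce every case modulo $c$, using the congruences of Lemma~\ref{initial1}. An equality $v_m=w_n$ then gives a congruence between the initial terms, and I will turn that congruence into an exact equality using the degree bounds of Lemma~\ref{prvi_deg}.

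\textbf{Case (1).} If $v_{2m}=w_{2n}$, then by \eqref{v2mc} and \eqref{w2nc} we have $z_0\equiv z_1 \pmod{c}$. By \eqref{deg1} and \eqref{ineq20}, $\deg(z_0),\deg(z_1)\le \frac{3\gamma-\alpha}{4}$. This is $<\gamma$ unless $\alpha=\gamma$, which is excluded because $\alpha+\beta\le\gamma$ with $\beta>0$. Hence $z_0-z_1$ is a multiple of $c$ of degree $<\gamma$, so $z_0=z_1$. Here the sign normalisation of Remark~\ref{predznaci} fixes $z_0,z_1$ up to the allowed $\pm$.

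\textbf{Cases (2) and (3).} If $v_{2m+1}=w_{2n}$, then $sz_0+cx_0\equiv z_1$, so $sz_0\equiv z_1 \pmod c$. I will also use $s^2\equiv 1$, $z_0^2\equiv1$ and $z_1^2\equiv 1 \pmod c$, which follow from \eqref{d0} and \eqref{d1}. Squaring is not directly useful. Instead, I consider $z_1- sz_0$ and $z_1+sz_0$, whose product is $z_1^2-s^2z_0^2\equiv 0 \pmod c$; note, however, that $c$ need not be prime.

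The cleaner route is the one of \cite[Lemma 3]{dij}. Write $z_1=sz_0+c\,k$ for some polynomial $k$, compare degrees, and use $\deg(z_1)<\gamma$. If $\deg(sz_0)<\gamma$, then either $k=x_0$ or $k=-x_0+$(something), after replacing $x_0$ by $-x_0$. If instead $z_0$ is constant, i.e.\ $z_0=\pm1$, then $z_1\equiv\pm s$, and degree reasons give $z_1=\pm s$.

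The symmetric subcase $z_1=\pm1$ forces $sz_0\equiv\pm1$. Multiplying by $s$ and using $s^2\equiv1$ gives $z_0\equiv \pm s$, hence $z_0=\pm s$.

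When $x_0$ is constant, $ad_0+1=x_0^2$ forces $d_0$ to be constant. Then $cd_0+1=z_0^2$ with $\deg z_0<\gamma$ forces $d_0=0$ and $z_0=\pm1$, which puts us in the listed exceptional case. Otherwise the relation $z_1=cx_0\pm sz_0$ holds with $x_0$ non-constant.

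Case (3) is the mirror image with $t$ in place of $s$ and $y_1$ in place of $x_0$. The extra options $(\pm t,\pm1)$ and $(\pm1,\pm1)$ come from $z_1=\pm1$, which gives $z_0\equiv \pm t$ or $\pm1$; one also needs $t\equiv$ appropriate values, and $t$ has degree $<\gamma$ in the relevant range. The option $(\pm s,\pm1)$ arises from the analogous congruence.

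\textbf{Case (4).} If $v_{2m+1}=w_{2n+1}$, then $sz_0+cx_0=tz_1+cy_1$ modulo $c$. Using $st\equiv \pm(cr\pm st)$-type identities, together with $w=cr\pm st$ from Lemma~\ref{d-forms}, the exceptional cases with a constant initial term become $z$-values equal to $\pm(cr\pm st)$, whose degree is $<\gamma$ by \eqref{degw-}. In the generic case I will compare degrees: by \eqref{degv1} and \eqref{degw1}, both sides have degree $<\gamma$ in the non-exceptional situation, so the congruence lifts to the stated equality.

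The main obstacle is the sign bookkeeping. Over $\mathbb{Q}(i)$ one cannot fix the signs, so each congruence must be carried with all sign combinations. One must also make sure that the degree inequalities indeed give $<\gamma$ in every branch, which I would check using Lemma~\ref{lemma-d_-} and the remark that $\deg z_0\ge\gamma/2$ unless $z_0=\pm1$.
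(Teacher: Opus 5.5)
Your overall strategy is the one behind \cite[Lemma 3]{dij}: reduce modulo $c$ with Lemma~\ref{initial1}, then lift the congruences to equalities by degree. Part (1) is fine apart from a side remark noted at the end. The real gap is that the step producing the generic alternatives in (2)--(4) is never carried out. Writing $z_1=sz_0+ck$ and comparing degrees gives nothing, because $\deg(sz_0)$ can exceed $\gamma$, and your conclusion about $k$ is unsupported. In (4), the bounds \eqref{degv1} and \eqref{degw1} do not give degree $<\gamma$, since $(\alpha+5\gamma)/4\ge\gamma$.

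The missing idea is the norm identity that follows from \eqref{jdba_pellova_prva}:
\[
(sz_0+cx_0)(sz_0-cx_0)=z_0^2+ac-c^2 .
\]
Its right-hand side is nonzero (it is $\equiv 1 \pmod c$) and has degree at most $2\gamma$. If $x_0$ is non-constant, the difference $2cx_0$ has degree $>\gamma$. Hence one of $sz_0\pm cx_0$ has degree $>\gamma$ and the other has degree $<\gamma$. The latter is $\equiv sz_0\equiv z_1\pmod c$, so by \eqref{ineq20} it equals $z_1$. The same argument applies to the other pairs:
\begin{itemize}
\item $(tz_1+cy_1)(tz_1-cy_1)=z_1^2+bc-c^2$ gives the generic case of (3).
\item Using both identities gives the generic case of (4), including the clause that both sides have degree $<\gamma$.
\item $(cr+st)(cr-st)=c^2-ac-bc-1$ turns $z\equiv\pm st\pmod c$ into $z=\pm(cr\pm st)$ in the exceptional cases of (4).
\end{itemize}

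Your treatment of a constant $x_0$ is also wrong. The relation $cd_0+1=z_0^2$ with $d_0$ a nonzero constant only gives $\deg z_0=\gamma/2<\gamma$, so nothing forces $d_0=0$. When $\alpha=0$, Lemma~\ref{lema_samo_jedan_konstantan} applied to $\{a,d_0,c\}$ gives $d_0=a$. Then $x_0^2=a^2+1$ and $z_0=\pm s$ (Lemma~\ref{alpha0}), so $z_1\equiv\pm s^2\equiv\pm1$. This is exactly the source of the listed option $(\pm s,\pm1)$ in (2). Your argument would exclude a case the statement asserts, and your ``symmetric subcase $z_1=\pm1$'' is not derived from anything.

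In (3) the extra options are not mirror images of (2). Since $\beta>0$, a constant $y_1$ forces $z_1=\pm1$ and $z_0\equiv\pm t\pmod c$. This yields $z_0=\pm t$ only when $\beta<\gamma$. The options $(\pm s,\pm1)$ and $(\pm1,\pm1)$ arise when $\beta=\gamma$: then $\deg t=\gamma$, and $t$ must be reduced modulo $c$. By Remarks~\ref{Rem_dminus} and~\ref{remark5}, $t\equiv\pm s$ if $d_-=0$ and $t\equiv -1$ if $d_-=a$. Writing ``$t\equiv$ appropriate values'' does not do this.

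A minor slip in (1): $(3\gamma-\alpha)/4<\gamma$ holds for every $\gamma>0$, so there is no need to exclude $\alpha=\gamma$. Your justification $\alpha+\beta\le\gamma$ is false in general when $d_-=0$.
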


The following consequence of Lemma~\ref{congr_1} corresponds to \cite[Lemma 4]{dij}.
\begin{lem}\label{alpha0}
    If $\alpha = 0$ and $x_0$ is a constant, then  $x_0^2 = a^2 +1$ and $z_0 = \pm s$. 
\end{lem}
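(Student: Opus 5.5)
We take $\alpha=0$, so $a$ is a nonzero constant, and $x_0$ is a constant. We use the relations \eqref{d0}, namely $ad_0+1=x_0^2$ and $cd_0+1=z_0^2$. Since $a$ and $x_0$ are constants, $d_0=(x_0^2-1)/a$ is a constant, and therefore $z_0^2=cd_0+1$ has degree $\gamma$ or $0$.

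\textbf{Case $d_0=0$.} Then $x_0=\pm1$ and $z_0=\pm1$. This is the degenerate pair $(z_0,x_0)=(\pm1,\pm1)$. For the claimed conclusion we need the non-trivial situation. Excluding this pair is exactly what is used in the Remark following \eqref{d1}: otherwise $\deg(z_0)\ge\gamma/2$. So we assume $(z_0,x_0)\neq(\pm1,\pm1)$.

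\textbf{Case $d_0\neq 0$.} Now $\deg(z_0)=\gamma/2$. Put $\kappa=d_0/a$, a nonzero constant. Then $z_0^2=cd_0+1=\kappa\,(ac)+1=\kappa(s^2-1)+1$. Hence
\[
z_0^2-\kappa s^2=1-\kappa .
\]
Here $\deg(s)=\gamma/2$, since $ac+1=s^2$ and $\alpha=0$. We factor over $\mathbb{C}[X]$:
\[
(z_0-\sqrt{\kappa}\,s)(z_0+\sqrt{\kappa}\,s)=1-\kappa .
\]
Suppose $\kappa\neq1$. Then both factors are nonzero constants, so subtracting them shows that $s$ is constant. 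This forces $c$ to be constant, contradicting $\gamma>0$. Therefore $\kappa=1$, which gives $d_0=a$. Consequently $x_0^2=ad_0+1=a^2+1$ and $z_0^2=ac+1=s^2$, that is, $z_0=\pm s$.

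The main point to handle carefully is the degenerate case $(z_0,x_0)=(\pm1,\pm1)$, i.e.\ $d_0=0$. If it must be treated rather than excluded, we would argue as in \cite[Lemma 4]{dij}, using Lemma~\ref{congr_1} to show that this choice of fundamental solution is covered by the listed alternatives. The main computation, reducing to a norm equation with constant right-hand side, is routine.
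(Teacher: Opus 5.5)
Your argument is correct for the statement read with the hypothesis it tacitly needs (see below), but it takes a different route from the paper. The paper writes out no proof. It presents the lemma as a consequence of Lemma~\ref{congr_1}, following \cite[Lemma 4]{dij}, so the conclusion is read off from the case analysis of initial terms attached to an intersection $v_m=w_n$.

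You bypass that machinery. From \eqref{d0} you get that $d_0=(x_0^2-1)/a$ is constant, and the norm equation $z_0^2-\kappa s^2=1-\kappa$ with $\kappa=d_0/a$ forces $\kappa\in\{0,1\}$. This is the Dujella--Luca ``at most one constant'' principle (Lemma~\ref{lema_samo_jedan_konstantan}) applied to $\{a,c,d_0\}$, which therefore must be improper. Your route is self-contained and does not depend on which intersection the fundamental solution comes from. It also yields directly the identity $d_0=a$ that Remark~\ref{constant} then uses to parametrize $a$ and $x_0$. The paper's route instead places the statement inside the initial-term bookkeeping used later, at the cost of relying on the congruence and degree analysis behind Lemma~\ref{congr_1}.

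Do not hedge on the degenerate case. If $x_0=\pm1$, then $x_0^2=1\neq a^2+1$ because $a\neq0$. So the conclusion is simply false for the fundamental solution $(z_0,x_0)=(\pm1,\pm1)$, which always exists, and no appeal to Lemma~\ref{congr_1} can rescue it. Your fallback sentence suggesting it might be ``treated'' should go. The lemma must be read with the tacit hypothesis $x_0\neq\pm1$ (equivalently $d_0\neq0$). This is consistent with the paper listing $z_0=\pm1$ as separate alternatives in Lemmas~\ref{congr_1} and~\ref{fund_cases}. What your proof actually establishes is the correct dichotomy: either $(z_0,x_0)=(\pm1,\pm1)$, or $x_0^2=a^2+1$ and $z_0=\pm s$. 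State it that way.
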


\begin{remark}\label{constant}
 The situation from Lemma \ref{alpha0} can be described more precisely. By (\ref{d0}), $d_0=a$ and $x_0^2 = a^2 +1$. Hence, $a,x_0\in\mathbb{Q}(i)$. Analogously to the proof of Lemma \ref{lemma-d_-}, we get $a= \pm\frac{1}{2}\big (f-\frac{1}{f}\big ),$ $x_0 = \pm \frac{1}{2}\big (f+\frac{1}{f}\big ),$ where $f\in \mathbb{Q}(i)\backslash\{0\}$. The transformation \(f\mapsto -f\) replaces \(a\) by \(-a\), so it is enough
to consider $a=\frac12\left(f-\frac1f\right).$
Also, by Remark \ref{predznaci}, it is enough to observe $x_0 = \frac{1}{2}\big (f+\frac{1}{f}\big ).$
\end{remark}

Now, we examine Lemma \ref{congr_1} and determine initial terms which appear in $\mathbb{Q}(i)[X]$ from these possibilities. In the following lemma for each possibility of initial terms we have relations between degrees $\alpha$, $\beta$ and $\gamma$ which admit that possibility. The case 1.(a) is always possible, because the $D(1)$-triple $\{a,b,c\}$ can always be extended to a regular or irregular $D(1)$-quadruple by adding $0$ to that set. For one particular $D(1)$-triple $\{a, b, c\}$ there can be more initial terms, depending on degrees of polynomials $a$, $b$ and $c$. The cases 2.(a), 2.(b), 3.(a), 3.(c), 4.(a) and 4.(b) are not possible in $\mathbb{Z}[i][X]$ (see \cite[Lemma 9]{glavni}), but they are possible in $\mathbb{R}[X]$ (see \cite[Lemma 4.3]{fij}) with the difference that in $\mathbb{R}[X]$ we were able to determine the signs $\pm$. Since we have to examine some possibilities which do not occur in
\(\mathbb Z[i][X]\), and since over \(\mathbb Q(i)[X]\) the signs cannot be fixed as in \(\mathbb R[X]\), we give the full proof. It is interesting to note that in $\mathbb{C}[X]$ from the case 4.(b)ii. of the following lemma an irregular polynomial $D(1)$-quadruple $\mathcal{D}_p$ arises (see \cite[Proposition 1]{dij}).

\begin{lem}\label{fund_cases}
\begin{enumerate}
\item If $v_{2m} = w_{2n}$, then either
\begin{enumerate}
\item $z_0 = z_1 = \pm 1$ or
\item  $z_0 = z_1 = \pm s $ and $\alpha = 0$ or
\item $z_0 = z_1 = \pm cr\pm st$ and $\alpha >0$, \ $\alpha + \beta \leq \gamma \leq 2\alpha + \beta$,
\end{enumerate}
\item If $v_{2m+1} = w_{2n}$, then either
\begin{enumerate}
\item $(z_0, z_1) = (\pm 1, \pm s)$ and $\gamma \geq 2\alpha + \beta$ or 
\item $(z_0, z_1) = (\pm s,\pm 1)$ and $\alpha = 0, \ \beta \leq \gamma$ or
\item $(z_0, z_1) = (\pm t, \pm cr \pm st)$, $\alpha = \beta, \ \gamma = 3\alpha$.
\end{enumerate}
\item If $v_{2m} = w_{2n+1}$, then
\begin{enumerate}
\item $(z_0, z_1) = (\pm t, \pm 1)$ and $\gamma \geq \alpha + 2\beta$ or 
\item $(z_0, z_1) = (\pm s, \pm 1), \ \alpha = 0, \ \beta = \gamma$ or
\item $(z_0, z_1) = (\pm 1, \pm 1)$,  $\alpha = 0, \ \beta = \gamma$ or
\item $(z_0, z_1) = (\pm cr \pm st, \pm s)$ and $\alpha \geq 0, \ 2\alpha + \beta \leq \gamma \leq \alpha + 2\beta$ (special case:
\begin{enumerate}
    \item $(z_0, z_1) = (\pm s, \pm s)$ and $\alpha = 0, \ \beta = \gamma$).
\end{enumerate}
\end{enumerate}
\item If $v_{2m+1} = w_{2n+1}$, the following options can occur:
\begin{enumerate}
\item $(z_0, z_1) = (\pm 1, \pm cr\pm st)$ and $\gamma \leq 2\alpha + \beta$ (special cases: 
\begin{enumerate}
    \item $(z_0,z_1) = (\pm 1,\pm 1)$ and $\alpha \leq \beta = \gamma$ and 
    \item $(z_0,z_1) = (\pm 1,\pm s)$ and $\alpha = 0$, $\beta = \gamma$) or
\end{enumerate} 
\item $(z_0, z_1) = (\pm cr\pm st, \pm 1)$ and $\gamma \leq \alpha + 2\beta$
(special cases: 
\begin{enumerate}
    \item 4.(a)i. and
    \item $(z_0,z_1) = (\pm s,\pm 1)$ and $\alpha = 0$, $\beta = \gamma$) or
\end{enumerate}
\item $(z_0, z_1) = (\pm t, \pm s)$ and $\gamma \geq \alpha + 2\beta$.
\end{enumerate}
\end{enumerate}
\end{lem}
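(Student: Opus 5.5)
The plan is to refine Lemma~\ref{congr_1} case by case. Each alternative there is either already a list of explicit initial terms, or a linear relation between $(z_0,x_0)$ and $(z_1,y_1)$. In the second situation I would use \eqref{d0} and \eqref{d1} to identify $z_0$ or $z_1$ with one of the roots $\pm1,\pm s,\pm t,\pm cr\pm st$. After that, the degree bounds \eqref{deg1} and \eqref{ineq20}, together with the Remark ($\deg z_0\ge\gamma/2$ unless $z_0=\pm1$), should give the stated relations among $\alpha,\beta,\gamma$.

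\textbf{Case 1: $v_{2m}=w_{2n}$.} Here Lemma~\ref{congr_1}(1) gives $z_0=z_1$. Put $d_0$ from \eqref{d0} and $d_1$ from \eqref{d1}. Since $cd_0+1=z_0^2=z_1^2=cd_1+1$, we get $d_0=d_1$. Hence $\{a,b,c,d_0\}$ is an (improper) quadruple, and $\deg z_0\le(3\gamma-\beta)/4<\gamma$ forces $\deg d_0<\gamma$.
\begin{itemize}
\item If $d_0=0$, then $z_0=\pm1$, which is case (a).
\item If $d_0=a$ is constant, then Lemma~\ref{alpha0} and Remark~\ref{constant} give $\alpha=0$ and $z_0=\pm s$, which is case (b).
\item Otherwise, the minimality of $\delta$ and Lemma~\ref{lema_supanj_delta} force $\{a,b,c,d_0\}$ to be regular with $d_0=d_-$. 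Then Lemma~\ref{od-} gives $z_0=\pm cr\pm st$, and the degree condition follows from comparing $\deg w_-=\gamma-(\alpha+\beta)/2$ with \eqref{ineq20}. This gives $\gamma\le2\alpha+\beta$, and together with \eqref{ocjenad-} it gives $\gamma\ge\alpha+\beta$, which is case (c).
\end{itemize}

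\textbf{Cases 2 and 3: mixed parity.} For the explicit alternatives $(\pm1,\pm s)$, $(\pm s,\pm1)$, $(\pm t,\pm1)$, $(\pm1,\pm1)$, the degree constraints come straight from the bounds. For example, $z_1=\pm s$ needs $(\alpha+\gamma)/2\le(3\gamma-\beta)/4$, i.e.\ $\gamma\ge2\alpha+\beta$. When $z_0=\pm s$ with $d_0=a$, the constraint $\beta=\gamma$ comes from Lemma~\ref{lemma-d_-}(1).

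For the non-explicit alternatives, e.g.\ $z_1=cx_0\pm sz_0$ with $x_0$ non-constant, I would compute $cd_1+1=z_1^2$. This shows that $d_1$ is a regular-type extension of $\{a,c,d_0\}$, and comparing with the formulas in Lemma~\ref{d-forms} forces $z_1=\pm cr\pm st$ and $(z_0,x_0)=(\pm t,\pm r)$. This is the same argument as in Lemma~\ref{od-}(3c), where $c\mid(\ldots)$ and $\gcd(c,t)=1$ are combined with degree counting. Substituting into \eqref{deg1} and \eqref{ineq20} gives $\alpha+2\beta\le\gamma\le2\alpha+\beta$, hence $\alpha=\beta$ and $\gamma=3\alpha$. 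The symmetric computation in Case 3 gives $(\pm cr\pm st,\pm s)$ with $2\alpha+\beta\le\gamma\le\alpha+2\beta$. Its degenerate version $(\pm s,\pm s)$ arises when $d_-=a$, using Lemma~\ref{od-}(2a).

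\textbf{Case 4: $v_{2m+1}=w_{2n+1}$.} The two explicit alternatives again give their degree conditions from the bounds. For the special subcases I would check when $\pm cr\pm st$ degenerates to $\pm1$ (this happens when $d_-=0$, so $\beta=\gamma$ by Lemma~\ref{od-}(1)) or to $\pm s$ (when $d_-=a$).

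For the relation $sz_0\pm cx_0=tz_1\pm cy_1$ of degree $<\gamma$, I would reduce modulo $c$ and square. Using $s^2\equiv t^2\equiv1$ and $z_0^2\equiv z_1^2\equiv1 \pmod c$, the common value $v_1=w_1$ is a $z$ with $d=(v_1^2-1)/c$ of degree $<\gamma$. The same trichotomy as in Case 1 (namely $d=0$, $d=a$, or $d=d_-$) then applies, and it leads to $(z_0,z_1)=(\pm t,\pm s)$, as in Lemma~\ref{od-}(3d). Finally $\gamma\ge\alpha+2\beta$ follows from $\deg t\le(3\gamma-\alpha)/4$.

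\textbf{Expected obstacle.} The main difficulty I foresee is the $\mathbb{Q}(i)$-specific constant case $d=a=\tfrac12(f-1/f)$. Here sign choices cannot be fixed, so every combination $\pm cr\pm st$ must be tracked. One must also exclude coincidences such as $\pm cr\pm st=\pm s$ outside the $\beta=\gamma$ regime, using Remark~\ref{remark5} and $\gcd(c,s)=1$.
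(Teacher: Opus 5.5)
Your overall route matches the paper's: run through the alternatives of Lemma~\ref{congr_1}, use minimality of $\delta$ to force the relevant $d$ into $\{0,a,d_-\}$ (this is Lemma~\ref{gap0}), identify the root via Lemma~\ref{od-}, and read off degrees from \eqref{deg1} and \eqref{ineq20}. Your Case 1 is fine. Two steps, however, fail as written.

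\textbf{The conditions in 3(b) and 3(c).} These do not come from where you say.

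For $z_0=\pm s$ you derive $\beta=\gamma$ from Lemma~\ref{lemma-d_-}(1). That lemma is about $d_-=a$, whereas $z_0=\pm s$ only gives $d_0=a$. In 2(b) exactly this situation occurs with $\beta<\gamma$ allowed, so $d_0=a$ cannot force $\beta=\gamma$. For $(\pm1,\pm1)$ the bounds \eqref{deg1} and \eqref{ineq20} say nothing at all.

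What distinguishes Case 3 is parity. By Lemma~\ref{initial1}, $v_{2m}=w_{2n+1}$ gives $z_0\equiv tz_1\pmod c$, i.e.\ $c\mid s\pm t$, resp.\ $c\mid 1\pm t$. Since $\deg s=\gamma/2<\deg t$ (as $\alpha=0$), resp.\ $t$ is non-constant, this forces $\deg t\geq\gamma$, i.e.\ $\beta=\gamma$.

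In 3(c) you still need $\alpha=0$. The paper appeals to the proof of \cite[Lemma 7]{fij}. A direct argument also works. If $\alpha>0$, then $d_-=0$ by Lemma~\ref{lemma-d_-}, so $t\equiv\pm s\pmod c$ and $c\mid 1\pm s$, forcing $\alpha=\beta=\gamma$. Then \eqref{degvm} and \eqref{degwn}, with $v_1=\pm s\pm c$ and $w_1=\pm t\pm c$, make $\deg w_1-\deg v_1$ an odd multiple of $\gamma$, although it lies in $[-\gamma/2,\gamma/2]$.

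Your sketch never uses Lemma~\ref{initial1}, so it has no way to produce these conditions.

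\textbf{The non-explicit alternatives.} The step ``$d_1$ is the regular extension of $\{a,d_0,c\}$, and comparing with Lemma~\ref{d-forms} forces $z_1=\pm cr\pm st$'' is not a valid deduction. Lemma~\ref{d-forms} concerns $d_-$ of $\{a,b,c\}$, and nothing yet ties $d_1$ to it. The argument of Lemma~\ref{od-}(3c) that you cite starts from $z_1=\pm cr\pm st$ rather than producing it.

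You need the Case 1 trichotomy here as well. Since $\deg z_1<\gamma$, also $\deg d_1<\gamma$. So the possibly improper quadruple $\{a,b,c,d_1\}$ has top degree $\gamma<\delta$, and minimality gives $d_1\in\{0,a,d_-\}$.

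The branches $d_1=0$ ($sz_0\pm cx_0=\pm1$) and $d_1=a$ ($sz_0\pm cx_0=\pm s$) are not in the lemma's list, so they must be ruled out. The paper does this in two ways:
\begin{itemize}
\item for $d_1=0$, via the factorization $(sz_0+cx_0)(sz_0-cx_0)=z_0^2+ac-c^2$ together with \eqref{deg1};
\item for $d_1=a$, via $\gcd(c,s)=1$, which gives $s\mid x_0$ and hence $x_0=0$.
\end{itemize}
Your sketch also leaves the analogous branches unhandled elsewhere. In Case 3 these are $tz_1\pm cy_1\in\{\pm1,\pm s\}$, where the subcase $\beta=\gamma$, $d_-=0$ needs the extra computation around \eqref{eq3}. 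In Case 4 they are $d\in\{0,a\}$.
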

\begin{proof}
\begin{enumerate}
\item From Lemma \ref{congr_1} we know $z_0 = z_1$. From the relations (\ref{rekurzija_vm}) and (\ref{rekurzija_wn}), we get $v_0 = w_0$, so we apply Lemma \ref{gap0}. The cases where $d = d_-$ are described in the parts 1.-3.(a) of Lemma \ref{od-}. If $d=0 \neq d_-$, then $z_0 = z_1 = \pm 1$. By Lemma \ref{lemma-d_-}, if $\beta = \gamma$ then $d_-=a$. Otherwise, $\beta < \gamma$. If $d = a \neq d_-$ and $\alpha = 0$, then $z_0 = z_1 = \pm s$. By Lemma \ref{lemma-d_-}, if $\beta = \gamma$ we get $d_-=0$.
\item One of the possibilities from Lemma \ref{congr_1} is $(z_0, z_1) = (\pm 1, \pm s)$. In this case $x_0 = \pm 1$, so Lemma \ref{initial1} leads to congruence $\pm s \pm c \equiv \pm s \pmod{c}$. Hence, we have either $\pm c \equiv 0 \pmod{c}$ which is true or $\pm c \equiv \pm 2s \pmod{c}$ which is a contradiction to (\ref{jdbe_osnovne}). By (\ref{ineq20}), we get $\gamma \geq 2\alpha + \beta$. \\
Now, we assume $(z_0, z_1) = (\pm s, \pm 1)$. By \eqref{d0}, $d_0=a$ and $x_0^2=a^2+1$. This is possible only if $a,x_0\in\mathbb{Q}(i)$. By Lemma \ref{initial1}, we get $\pm s^2+cx_0 \equiv \pm 1 \pmod{c}$, and $(\pm a+x_0)c \equiv 0 \pmod{c}$ holds. We have $\alpha = 0$ and we only know that $\gamma\geq\beta$. Possible relations between $\beta$ and $\gamma$, depending on the form of $d_-$, are described in Lemma \ref{od-}.\\
The last possibilities for this setup occur for $z_1 = cx_0 \pm sz_0$, where $x_0$ is not a constant. We have $\pm v_1 = w_0$. So, by Lemma \ref{gap0}, we can have $d_1 = d_-$ or $d_1 = 0 \neq d_-$ or $d_1 = a \neq d_-$. We know that $d_-$ occurs from $v_m = w_n = \pm cr \pm st$ for $m, n \in \{0, 1\}$ and, for $d_- \neq 0$,  $\deg(d_-)=\gamma -\alpha-\beta$. The situation when $d_1 = d_-$ is described in 3.(c) of Lemma \ref{od-}. For $d_1=0\neq d_-$, $sz_0\pm cx_0=\pm 1$. The equality $\alpha = \gamma$ leads to $\gamma = 0$, and the contradiction is obtained, so we conclude $\alpha < \gamma$. Moreover, using (\ref{jdba_pellova_prva}), we get
\begin{equation}\label{eq1}
(sz_0+cx_0)(sz_0-cx_0) = z_0^2 +ac-c^2.
\end{equation}
From (\ref{eq1}), we obtain $\deg(sz_0\pm cx_0) = 2\gamma$, which is a contradiction to (\ref{deg1}).  If $d_1 = a \neq d_-$, then $sz_0\pm cx_0 = \pm s$. So, $s \mid x_0$, which is not possible for $x_0 \neq 0$ because of (\ref{deg1}). So, for $x_0 = 0$, $d_1 = a$. In this case, $z_0 = \pm s$, and $\pm s^2  = \pm s$, a contradiction.
\item The first possible option from Lemma \ref{congr_1} is $(z_0, z_1) = (\pm t, \pm 1)$. In this case $y_1 = \pm 1$, so Lemma \ref{initial1} leads to the congruence $\pm t \equiv \pm t \pm c \pmod{c}$. Then either $\pm c \equiv 0 \pmod{c}$, which is true, or $\pm c \equiv \pm 2t \pmod{c}$ which contradicts the initial equations (\ref{jdbe_osnovne}). The conclusion about degrees follows from (\ref{deg1}).\\
Another option we observe is $(z_0, z_1) = (\pm s, \pm 1)$. In this case, we again have $y_1 = \pm 1$ and from Lemma \ref{initial1} the congruence $\pm s \equiv \pm t \pm c \pmod{c}$ is obtained. We have $d_0 = a$ and, by (\ref{d0}), $x_0^2=a^2+1$. This is possible only for $a\in\mathbb{Q}(i)$. Since $c\mid \pm s \pm t$, we conclude that $\beta = \gamma$.\\
The next case that can be valid, according to Lemma \ref{congr_1}, is $(z_0, z_1) = (\pm 1, \pm 1)$. Then again $y_1 = \pm 1$ and, from Lemma \ref{initial1}, we get the congruence
\begin{equation}\label{congr3_1}
\pm 1 \equiv \pm t \pm c \pmod{c}.
\end{equation}
By (\ref{congr3_1}), $c\mid \pm 1\pm t$ so $\beta=\gamma.$ Lemma \ref{lemma-d_-} implies that $d_-=0$ or $d_-=a$ but, according to the proof of \cite[Lemma 7]{fij}, in this case $d_- = a$. 
For $a = \frac{1}{2}\left(f - \frac{1}{f}\right)$, by Remark \ref{remark5}, we restrict our attention to the family where $c = -\frac{1}{f}(t+1)$, which implies $t = -cf - 1$. So, (\ref{congr3_1}) becomes:
\begin{equation}
\pm 1 \equiv \mp cf \mp 1 \pm c\pmod{c}.
\end{equation}
This is possible in $\mathbb{Q}(i)[X]$ for suitable combinations of the independent signs (yielding $1 \equiv 1$ or $-1 \equiv -1$).
Finally, there is also a possibility when $z_0 = tz_1 \pm cy_1$, where $y_1$ is not a constant polynomial. Obviously, we have $v_0 = \pm w_1$ so, by Lemma \ref{gap0}, we get $d_0 = d_-$ or $d_0 = 0 \neq d_-$ or $d_0 = a \neq d_-$. The cases when $d_- = d_0$ are described in 2.(a) and 3.(b) of Lemma \ref{od-}. If $d_0 = 0\neq d_-$, then $tz_1 \pm cy_1 = \pm 1$. If $\beta = \gamma$, we conclude $d_- = a$ and $\alpha=0$. We get
\begin{equation}\label{eq2}
(cy_1 + tz_1)(cy_1 - tz_1) = c^2 - bc - z_1^2.
\end{equation}
We have $\deg(tz_1 \mp cy_1) = 2\gamma$. If $\beta < \gamma$  we get $\deg(cy_1 \mp tz_1) = 2\gamma$ which is not possible because of (\ref{ineq20}). If $d_0 = a \neq d_-$, then $tz_1 \pm cy_1 = \pm s$. If $\beta < \gamma$, then by (\ref{eq2}) we get $\deg(tz_1 \pm cy_1) = \frac{3\gamma}{2}$, a contradiction with (\ref{ineq20}). For $\beta = \gamma$, by Lemma \ref{lemma-d_-}, we get $d_- = 0$, so $c = a + b \pm 2r$ and, by (\ref{eq2}), we get $\deg(tz_1 \mp cy_1) = \gamma$. We have $tz_1 \equiv \pm s \pmod{c}$. Multiplying that by $t$, we obtain $z_1 \equiv \pm st \mp cr \pmod{c}$. Since
\begin{equation}\label{eq3}
    (\pm st - cr)(\pm st + cr) = ac + bc + 1 - c^2,
\end{equation}
one of the polynomials $\pm st \pm cr$ has a degree less than $\gamma$, and the other has degree $\gamma + \frac{\alpha+\beta}{2}$. Hence, $\pm st \pm cr = z_1$. From (\ref{eq3}) and $c = a+b\pm 2r$, we get $\deg(z_1)=0$. So, $z_1 = \pm 1$ and $y_1 = \pm 1$, which is not possible.
\item From Lemma \ref{congr_1}, one possibility is $(z_0, z_1) = (\pm 1, \pm cr \pm st) = (\pm 1, \pm w)$. So, $x_0 = \pm 1$ and $y_1 = \pm v$ and, according to Lemma \ref{initial1}, we get $ \pm s \pm c \equiv \pm crt \pm s \pm bcs \pm bcs \pm crt \pmod{c}.$ The option $c \mid s$ is not possible because of (\ref{jdbe_osnovne}), but $\pm c \equiv 0 \pmod{c}$ is true for suitable signs $\pm$ in congruence. By (\ref{ineq20}), we get the bound for $\gamma$. In particular, for $d_- = 0$, by Lemma \ref{d-forms}, $z_1 = \pm 1$ and $\alpha \leq \beta = \gamma$. For $d_- = a$, we have $z_1 = \pm s$ and $\alpha = 0$, $\beta = \gamma$.\\
We now consider the case where $(z_0, z_1) = (\pm cr \pm st, \pm 1)$. Similarly to the previous case, either $c\mid t$ which is not possible or $c\equiv 0 \pmod{2c}$ which is possible in $\mathbb{Q}(i)[X]$ for the suitable signs $\pm$. Conclusions about degrees and special cases are analogous to in the previous case. \\
Finally, we have $sz_0 \pm cx_0 = tz_1 \pm cy_1$, where $x_0, y_1$ are not constants, and polynomials on both sides of the equation have degrees less than $\gamma$. So, we have $\pm v_1 = \pm w_1$. By Lemma \ref{gap0}, this can lead to $d_-$ or $d_+$, if $\gamma \geq \alpha + 2\beta$, or to an irregular $D(1)$-quadruple $\{a, b, c, d\}$ where $d = 0\neq d_-$ or $d = a \neq d_-$. Cases where we obtain $d_+$ are possible \cite[Proposition 1]{dij} only if equations $sz_0 \pm cx_0 = tz_1 \pm cy_1$ hold for both signs $\pm$. Then, we have $(z_0,z_1)=(\pm t,\pm s)$. If $d=0\neq d_-$ or $d = a \neq d_-$, then $sz_0 \pm cx_0 = \pm 1$ or $sz_0 \pm cx_0 = \pm s$, respectively. These situations are not possible as it is shown in part (2).
\end{enumerate}
\end{proof}

The following lemma allows us to reduce the number of cases in Lemma \ref{fund_cases}, because in some cases we actually have the same intersections of sequences, but with shifted indices. It is a version for $\mathbb{Q}(i)[X]$ of \cite[Lemma 2.3]{cfm} for integers. The proof is analogous to the proof of \cite[Lemma 10]{glavni} for $\mathbb{Z}[i][X]$.

\begin{lem}\label{preklapanja}Let $v_{z_0,m}$ be the $m$-th term of the sequence $(v_m)_{m\geq 0}$, given by (\ref{rekurzija_vm}), with initial term $z_0$ and $w_{z_1,n}$ the $n$-th term of the sequence $(w_n)_{n\geq 0}$, given by (\ref{rekurzija_wn}), with initial term $z_1$. Then
$$\begin{tabular}{l l l }
$v_{t,m}=-v_{cr-st,m+1},$ & $v_{-t,m+1}=-v_{-cr+st,m},$ \\
$v_{t,m+1}=v_{cr+st,m},$ & $v_{-t,m}=v_{-cr-st,m+1},$ \\
$w_{s,n}=-w_{cr-st,n+1},$ & $w_{-s,n+1}=-w_{-cr+st,n},$ \\
$w_{s,n+1}=w_{cr+st,n},$ & $w_{-s,n}=w_{-cr-st,n+1}.$
\end{tabular}$$
\end{lem}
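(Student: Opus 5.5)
The plan is to verify each identity directly from the closed form \eqref{jdba_opce_rjesenje_PRVE}, i.e. from
\[
v_m\sqrt{a}+x_m\sqrt{c}=(z_0\sqrt{a}+x_0\sqrt{c})\left(s+\sqrt{ac}\right)^m ,
\]
rather than from the recurrences alone. First I will record which $x_0$ accompanies each initial term. For $z_0=\pm t$ we have $cd_0+1=t^2$, so $d_0=b$ and $x_0=\pm r$. For $z_0=\pm cr\pm st$ we have $d_0=d_\pm$, so $x_0$ is the corresponding $u=\pm at\pm rs$ from Lemma~\ref{d-forms}. By Remark~\ref{predznaci}, the sign of $x_0$ may be chosen freely, since the opposite choice only interchanges $v_1$ and $v_{-1}$. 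I will fix the sign that makes the identity hold and note that this covers every sequence we must consider.

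The key computation is a single multiplication in $\mathbb{Q}(i)[X][\sqrt a,\sqrt c]$:
\[
(t\sqrt a+r\sqrt c)(s+\sqrt{ac})=(st+cr)\sqrt a+(rs+at)\sqrt c .
\]
Multiplying further by $(s+\sqrt{ac})^m$ gives $v_{t,m+1}=v_{cr+st,m}$, where the second sequence has $x_0=rs+at$. In the same way,
\[
(-t\sqrt a+r\sqrt c)(s+\sqrt{ac})=(cr-st)\sqrt a+(rs-at)\sqrt c
\]
gives $v_{-t,m+1}=v_{cr-st,m}$. Two sign symmetries then produce the remaining identities:
\begin{itemize}
\item replacing $(z_0,x_0)$ by $(-z_0,-x_0)$ negates the whole sequence, so $v_{cr-st,m}=-v_{-cr+st,m}$;
\item replacing $x_0$ by $-x_0$ amounts to reversing the direction of the index, using $(s+\sqrt{ac})^{-1}=s-\sqrt{ac}$.
\end{itemize}
Together these yield the four $v$-identities in the table, including the two with index shifted on the other side, such as $v_{t,m}=-v_{cr-st,m+1}$. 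For that one I use $(t\sqrt a\mp r\sqrt c)=(\pm cr - st)\sqrt a+\dots$ multiplied by $(s-\sqrt{ac})$, i.e.\ I read the shift backwards.

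As a cleaner alternative that avoids sign bookkeeping, I can use the recurrence \eqref{rekurzija_vm}. Both sides of each identity satisfy the same second-order recurrence $y_{m+2}=2sy_{m+1}-y_m$, so it suffices to check agreement at two consecutive indices. This reduces to polynomial identities such as
\[
s(cr+st)+c(rs+at)=2s(cr+st)-t,
\]
which is equivalent to $s^2t-act=t$ and hence follows from $ac+1=s^2$. The $w$-identities are obtained symmetrically by exchanging the roles $a\leftrightarrow b$, $s\leftrightarrow t$ and $x\leftrightarrow y$, with the same computation using $bc+1=t^2$ and $x_0\mapsto y_1=\pm r$ or $\pm v$.

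The main obstacle is purely the sign bookkeeping: matching each of the eight stated identities with the correct choice of $x_0$, or $y_1$, and the correct direction of the shift. I would handle this by the two-term recurrence check, which makes each identity a finite verification using \eqref{jdbe_osnovne}.
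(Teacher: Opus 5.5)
Your proposal is correct and follows essentially the route of the proof the paper cites (\cite[Lemma 10]{glavni}, after \cite[Lemma 2.3]{cfm}). Each identity is a direct check, either via the closed form \eqref{jdba_opce_rjesenje_PRVE} or via agreement at two consecutive indices of the common recurrence, with the signs of $x_0\in\{\pm r,\pm u\}$ and $y_1\in\{\pm r,\pm v\}$ chosen as Remark~\ref{predznaci} permits. The one blemish is your backward product, whose signs are paired the wrong way: in fact $-(t\sqrt a+r\sqrt c)(s-\sqrt{ac})=(cr-st)\sqrt a+(at-rs)\sqrt c$, and your two-term recurrence check would catch such a slip.
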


By Lemma \ref{preklapanja}, some cases of Lemma \ref{fund_cases} can be reduced to the other ones. More precisely, after a possible change of signs and a shift of one of the indices, the same intersections are obtained. However, the degree conditions are inherited from the original cases. Therefore, whenever such a reduction is used in the proof of Theorem \ref{glavni-tm}, we reduce only the form of the intersection, while the degree conditions remain those of the original case.

\section{Proof of the main theorem}
In order to check all possible extensions of an arbitrary $D(1)$-triple $\{a,b,c\}$ to a $D(1)$-quadruple $\{a,b,c,d\}$ in $\mathbb{Q}(i)[X]$, we search for suitable solutions of equation (\ref{jedn-z}), where $(v_{m})_{m\geq 0}$ and $(w_{n})_{n\geq 0}$ are binary recurrence sequences defined by (\ref{rekurzija_vm}) and (\ref{rekurzija_wn}), for some initial values $(z_{0},x_{0})$ and $(z_{1},y_{1})$. In Lemma \ref{fund_cases} all possible initial terms are described. In the proof of the theorem, the majority of cases from Lemma \ref{fund_cases} will be addressed, while those not explicitly treated are covered by previously established cases from the same lemma, based on the results of Lemma \ref{preklapanja}. The aim of this section is to prove that none of these cases leads to an irregular \(D(1)\)-quadruple with \(d=d'\). We use relations in $\mathbb{Q}(i)[X]$ from \cite[Lemma 6]{dif2}:

\begin{lem}\label{Lm8}Let the sequences $(v_{m})_{m\geq 0}$ and $(w_{n})_{n\geq 0}$ be given by  (\ref{rekurzija_vm}) and (\ref{rekurzija_wn}). Then, \begin{eqnarray*}
v_{2m}&\equiv&z_{0}+2c(az_{0}m^2+sx_{0}m)\ ({\rm mod}\ {c^2}), \\
v_{2m+1}&\equiv&s z_{0}+c[2asz_{0}m(m+1)+x_{0}(2m+1)]\ ({\rm mod}\ {c^{2}}),\\
w_{2n}&\equiv&z_{1}+2c(bz_{1}n^{2}+ty_{1}n)\ ({\rm mod}\ {c^{2}}), \\
w_{2n+1}&\equiv&t z_{1}+c[2btz_{1}n(n+1)+y_{1}(2n+1)]\ ({\rm mod}\ {c^{2}}).
\end{eqnarray*}\end{lem}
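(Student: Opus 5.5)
We prove Lemma~\ref{Lm8} by strong induction on the index, working modulo $c^2$ throughout. Everything rests on the identities $s^2=ac+1$ and $t^2=bc+1$ from \eqref{jdbe_osnovne}. The claimed congruences for $(w_n)$ are obtained from those for $(v_m)$ by the substitution $(a,s,z_0,x_0)\mapsto(b,t,z_1,y_1)$. This substitution turns \eqref{rekurzija_vm} into \eqref{rekurzija_wn}, so it suffices to treat $(v_m)_{m\geq0}$.

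First we check the base cases. For $m=0$ the formulas give $v_0=z_0$ and $v_1=sz_0+cx_0$, which agree with \eqref{rekurzija_vm} exactly, not just modulo $c^2$.

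Rather than applying the recurrence $v_{m+2}=2sv_{m+1}-v_m$ and handling even and odd indices alternately, we use the two-step recurrence
\[
v_{k+2}=(4s^2-2)v_k-v_{k-2},
\]
which follows from the characteristic polynomial of $(s+\sqrt{ac})^2$. Reducing with $s^2=ac+1$ gives $4s^2-2=2+4ac$. Hence the even and odd subsequences each satisfy
\[
v_{k+2}=(2+4ac)v_k-v_{k-2}.
\]
This requires the four starting values $v_0,v_1,v_2,v_3$. We compute them directly: $v_2=2sv_1-v_0=2s^2z_0+2scx_0-z_0=z_0+2c(az_0+sx_0)$, which is exact. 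Similarly, $v_3=2sv_2-v_1\equiv sz_0+c(4asz_0+3x_0)$. Both agree with the claimed formulas at $m=1$.

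For the inductive step in the even case, we substitute the assumed forms of $v_{2m}$ and $v_{2m-2}$ into the two-step recurrence. Working modulo $c^2$, the product $4ac\cdot 2c(\dots)$ vanishes, so
\[
v_{2m+2}\equiv 2z_0+4c(az_0m^2+sx_0m)+4acz_0-z_0-2c\bigl(az_0(m-1)^2+sx_0(m-1)\bigr).
\]
The coefficient of $2caz_0$ is $2m^2+2-(m-1)^2=(m+1)^2$, and the coefficient of $2csx_0$ is $2m-(m-1)=m+1$. This is exactly the claimed formula at $m+1$.

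The odd case is handled the same way. The coefficient of $2asz_0c$ becomes $2m(m+1)+2-(m-1)m=(m+1)(m+2)$, and the coefficient of $cx_0$ becomes $2(2m+1)-(2m-1)=2m+3$. Once more, all terms of order $c^2$ are discarded.

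The only point requiring care is making sure every product containing $c$ twice is dropped, and that $s^2$ is always replaced by $ac+1$ before reducing. No divisibility or coprimality facts are needed. This is a routine polynomial identity check, valid over any commutative ring, and in particular over $\mathbb{Q}(i)[X]$.
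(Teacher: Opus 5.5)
Your proof is correct: the starting values $v_0,\dots,v_3$, the two-step recurrence $v_{k+2}=(2+4ac)v_k-v_{k-2}$, and the coefficient computations giving $(m+1)^2$, $m+1$, $(m+1)(m+2)$ and $2m+3$ all check out, and the substitution $(a,s,z_0,x_0)\mapsto(b,t,z_1,y_1)$ correctly transfers the result to $(w_n)$. The paper gives no proof of its own and simply imports these congruences from \cite[Lemma 6]{dif2}, where they are the standard induction on the recurrence, so your argument is essentially that standard verification written out explicitly.
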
 

We also use the following result \cite[Lemma 5.2]{fij}, adjusted for $\mathbb{Q}(i)[X]$:
\begin{lem}\label{Lm-dodano}Let $\{a,b,c\}$ be a possibly improper $D(1)$-triple from $\mathbb{Q}(i)[X]$ for which (\ref{jdbe_osnovne}) holds. Then, \begin{equation}\label{lm-dodano}\mp 2rst\equiv a+b-d_-\ ({\rm mod}\ {c}).\end{equation}
\end{lem}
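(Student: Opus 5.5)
The plan is to derive the congruence \eqref{lm-dodano} directly from the explicit form \eqref{dminus} of $d_-$ by reducing modulo $c$. We first treat the case $d_-\neq 0$. By \eqref{dminus}, with the sign convention fixed so that $d_-$ is the extension of smaller degree, we have
\[
d_- = a+b+c+2abc \pm 2rst .
\]
Reducing modulo $c$ kills the terms $c$ and $2abc$. This gives $d_- \equiv a+b \pm 2rst \pmod{c}$, which is equivalent to $\mp 2rst \equiv a+b-d_- \pmod{c}$. The signs here are read with the same convention as in \eqref{dminus}: the $\pm$ in the lemma is the one that produces $d_-$, and its negative appears on the left.

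For the possibly improper case we note that \eqref{dminus} is a purely algebraic identity. It defines $d_-$ from $a,b,c,r,s,t$ satisfying \eqref{jdbe_osnovne}, and it needs neither distinctness nor $d_-\neq0$. So the same reduction applies verbatim when $d_-=0$, giving $\mp 2rst\equiv a+b \pmod c$. Equally, the computation does not use $a,b,c$ being nonzero or distinct. This covers improper triples, for instance those with a repeated element, where \eqref{jdbe_osnovne} still makes sense.

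As a consistency check, and as an alternative route in case the sign convention needs justification, we would also use \eqref{d_-1}, $d_-=a+b-c+2rw$, together with $w=cr\pm st$ from \eqref{uvw-minus}. Then $2rw\equiv \pm 2rst \pmod c$, and hence $d_-\equiv a+b\pm 2rst$ again. This confirms that the sign tied to $w$ is the same one appearing in $d_-$.

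The only real obstacle is bookkeeping of the signs. Over $\mathbb{Q}(i)[X]$ the signs of $r,s,t$ are not normalized, so the statement must be read as: for the choice of sign giving $d_-$, the displayed congruence holds with the opposite sign on $rst$. We would state this explicitly. No degree arguments are needed.
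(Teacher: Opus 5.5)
Your proof is correct and takes essentially the paper's approach: the paper gives no proof of its own and imports the statement from \cite[Lemma 5.2]{fij}, where it follows by reducing the defining identity \eqref{dminus}, $d_-=a+b+c+2(abc\pm rst)$, modulo $c$, exactly as you do. You also correctly note that this identity is purely algebraic, so it covers $d_-=0$ and improper triples, and that the sign on $rst$ is the one selected in \eqref{dminus}.
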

\bigskip

In the proof of Theorem \ref{glavni-tm}, we follow the approach used in \cite[Theorem 1]{glavni} for $\mathbb{Z}[X]$ and in \cite[Theorem 1.4]{fij} for $\mathbb{R}[X]$, but the specificity of $\mathbb{Q}(i)[X]$ allows some more possibilities that must be examined. Although all possibilities for the initial terms from Lemma~\ref{fund_cases}
already occur over \(\mathbb R[X]\), many details require a different
approach over \(\mathbb Q(i)[X]\). 
\medskip

\begin{proof}[Proof of Theorem~\ref{glavni-tm}] In $\mathbb{Q}(i)[X]$, by Lemma \ref{gap0}, if $v_m=w_n$ and $d'=\frac{v_m^2-1}{c}$, then $m\geq 3$ and $n\geq 3$. The cases where $\{0,1,2\}\cap \{m,n\}\neq \emptyset$ are described in \cite[Proposition 1]{dij} for $\mathbb{C}[X]$. Thus, in the equality \(v_m=w_n\), we may assume that \(m\geq3\) and \(n\geq3\).\\

\textbf{Case 1.a)} $v_{2m}=w_{2n}$, $z_{0}=z_{1}=\pm 1$.

By (\ref{d0}),  (\ref{d1}) and Remark \ref{predznaci}, we take $x_0=1$ and $y_1=1$. By Lemma \ref{Lm8},
\begin{equation}\label{kon1}\pm am^2+ sm\equiv \pm bn^2+ tn\ ({\rm mod}\ {c}).\end{equation} We deal with two cases: $\beta < \gamma$ and $\beta = \gamma$. 

Let $\beta<\gamma$. Since $v_1=c\pm s$ and $w_1=c\pm t$, by (\ref{degvm}) and (\ref{degwn}), from ${\rm deg}(v_{2m})= {\rm deg}(w_{2n})$ similarly to \cite{dif2} we obtain a contradiction. 

By Lemma \ref{lemma-d_-}, we know that when $d_-\neq 0$ then if $\beta=\gamma$ we have $\alpha=0$. Moreover, in Remarks \ref{remark3} - \ref{remark5}, we precisely described such cases. Also, in Remark \ref{Rem_dminus} we described the possibility that $d_-=0$, which can occur for $\beta\leq \gamma$. Since the case \(\beta<\gamma\) has been excluded, it remains to consider \(d_-=0\) only when \(\beta=\gamma\).

Let $d_-=a$. For $a = \frac{1}{2}\left(f-\frac{1}{f}\right)$, where $f\in\mathbb{Q}(i)\backslash\{0\}$, analogously to \cite{fij} the congruence
$\pm am^2+sm\equiv \mp 2pn^2 - n \pmod{c}$, with $p\in\mathbb{Q}(i)$, 
is obtained from (\ref{kon1}). Since $\alpha < \gamma$, the resulting congruence is in fact an equality. However, the polynomial on the left-hand side is non-constant, whereas the polynomial on the right-hand side is constant. This yields a contradiction.

If $d_-=0$, according to Remark \ref{Rem_dminus}, if $c = a + b + 2r$, then $s = \pm (a+r)$ and $t = \pm (b+r)$, and when $c = a + b -2r$, then $s = \pm (a-r)$ and $t=\pm (b-r)$. Assume that  $\alpha<\beta$. From (\ref{kon1}), we get 
$$a(\pm m^2 \pm n^2 \pm m \pm n)\equiv \pm r(\mp 2n^2 \mp n \mp m) \pmod{c}.$$  
Since $a$ and $r$ have different degrees, both less than $\gamma$, both sides of the congruence must be equal to zero. This cannot happen for $2m\geq 3$ and $2n\geq 3$.

For $\alpha=\beta=\gamma$, analogously to \cite{dif2}, we get $d=0$ or $d=d_+$.\\  

\textbf{Case 1.b)} $v_{2m}=w_{2n}$, $z_{0}=z_{1}=\pm s$ and $\alpha=0$.

Using (\ref{d0}) and (\ref{d1}), we get $d_0=d_1=a$. Hence, $a=\frac{1}{2}\left(f - \frac{1}{f}\right)$ for $f \in \mathbb{Q}(i)\backslash\{0\}$. In this case $x_0^2=a^2+1$ and $y_1=\ r$. From Lemma \ref{Lm8}, we have 
\begin{equation}\label{kon2}\pm 2asm^2+2sx_0m\equiv \pm 2bsn^2+ 2trn\ ({\rm mod}\ {c}).\end{equation}
By multiplying the congruence (\ref{kon2}) by $s$ and using (\ref{jdbe_osnovne}) and (\ref{lm-dodano}), we get 
\begin{equation}\label{kon3}\pm 2am^2+ 2x_0m\equiv \pm 2bn^2+\varepsilon (a+b-d_-)n\ ({\rm mod}\ {c}),\end{equation}
where $\varepsilon\in\{-1,1\}$ is an independent sign.
\\
Let $\beta<\gamma$. From (\ref{kon3}), multiplying by $\pm 1$ (letting $\mu = \pm 1$ represent the sign of $z_0$) and using Lemma \ref{lemma-d_-}, 
\begin{equation}\label{kon4} 2am^2+2\mu x_0m =  2bn^2+ \mu \varepsilon (a+b-d_-)n.\end{equation}
If ${\rm deg}(d_-)<\beta$, by comparing the leading coefficients on both sides of this equation, we get $2n^2+\mu \varepsilon n=0$, which is not possible. Hence, $\rm{deg}(\textit{d}_{-})=\beta$ and by Lemma \ref{lemma-d_-}, we get $\gamma=2\beta$.  
According to Lemma \ref{lemma-d_-} we have $d_- = c+sv+tu$, so (\ref{kon3}) becomes
\begin{equation}
\pm 2am^2+2 x_0m \equiv \pm 2bn^2+ \varepsilon(a+b-sv-tu)n \pmod{c},
\end{equation}
from which, by considering degrees of polynomials, we conclude  
\begin{equation}
\pm 2am^2+2 x_0m \mp 2bn^2 - \varepsilon(a+b-sv-tu)n = k\cdot(d_--sv-tu), 
\end{equation} for some \(k\in\mathbb Q(i)\).

By analyzing the leading coefficients on both sides of the equation and using the fact that $\deg(sv+tu)=\deg(d_- - c)=\gamma$, we get $k = -\varepsilon n$. In
particular, \(k\in\mathbb Z\). So,
\begin{equation}\label{d_-1b}d_- = a + b \mp \frac{2am^2}{\varepsilon n} - \frac{2 x_0m}{\varepsilon n} \pm \frac{2bn}{\varepsilon}.\end{equation}
\\
One option from Lemma \ref{Lm-ubaceno_novo} is $d_- = a + b \pm 2r$. By (\ref{d_-1b}), it is easy to conclude that $\pm 2r \mp \frac{2bn}{\varepsilon}$ must be a constant polynomial then, which is not possible. Another option from Lemma \ref{Lm-ubaceno_novo} is $\{a, b, d_-\} = \left\{\frac{1}{2}\left(f-\frac{1}{f}\right), b, \frac{b}{f^2} - \frac{2}{f}\right\}, \ f\in\mathbb{Q}(i)\backslash\{0\}.$ By plugging $d_-$ into (\ref{d_-1b}), it now becomes
\begin{equation}
b\left(\frac{1}{f^2}-1 \mp \frac{2n}{\varepsilon}\right)
= a \mp \frac{2 a m^2}{\varepsilon n} - \frac{2 x_0 m}{\varepsilon n} + \frac{2}{f}.\label{usporedba}
\end{equation} 
We conclude that it has to be \begin{equation}\label{f}\frac{1}{f^2}=1 \pm 2n.\end{equation} 
Since $a= \frac{f}{2}(1-\frac{1}{f^2})$, using (\ref{f}), we get \begin{equation}\label{a}a= \mp nf.\end{equation}
From $x_0^2=a^2+1$ and Remark \ref{predznaci}, we further conclude $x_0=\frac{1}{2}(f+\frac{1}{f})$ and then, by (\ref{f}), \begin{equation}\label{x_0}x_0=f(1 \pm n).\end{equation}
In (\ref{usporedba}), the constant part is $a \mp \frac{2 a m^2}{\varepsilon n} - \frac{2 x_0 m}{\varepsilon n} + \frac{2}{f}=0$. By inserting (\ref{a}) and (\ref{x_0}), using \eqref{f}, we obtain \begin{equation}\label{zbroj}
    \mp n + 2m^2-\frac{2\varepsilon m}{n}(1\pm n)+ 2(1\pm 2n)=0.
\end{equation} Since $z_{0}=z_{1}=\pm s$, from (\ref{rekurzija_vm}) and (\ref{rekurzija_wn}), $v_1=c(x_0\pm a)\pm1$ and $w_1= cr\pm st$. Hence, $\deg(v_1) = \gamma$. By \eqref{eq3} $\deg (c^2r^2-s^2t^2)=2\gamma$, so we have $\deg(w_1) = \frac{3\beta}{2}$ or $\deg(w_1) = \frac{5\beta}{2}$, depending on the sign $\pm$. Now we compare the degrees of $v_{2m}$ and $w_{2n}$. If $2m = 3n-1$, then in \eqref{zbroj} we have $n \mid 1$, i.e., $n=1$. Then, $m=1$. However, the option \(m=n=1\) is not possible in \eqref{zbroj} for any
combination of signs. The second case, when $2m = 3n$, is also not possible in (\ref{zbroj}).\\
Let $\beta=\gamma$. By Lemma \ref{lemma-d_-}, $d_-=0$ and $c = a+b\pm 2r$ or $d_-=a$, where $a\in\mathbb{Q}(i)\backslash\{0\}$. For $d_-=0$, from (\ref{kon3}), we obtain $$\pm 2am^2 + 2x_0m \mp 2bn^2- \varepsilon(a+b)n=k(a+b\pm 2r), \ k\in\mathbb{Q}(i).$$ By comparing the leading coefficients on both sides of this equation, we get $\mp 2n^2-\varepsilon n-k=0$ and then $k=0$. Consequently, $n(\mp 2n- \varepsilon)=0$, which is not possible.
\\
Let $d_-=a$. By (\ref{kon3}) and Remark \ref{remark5}, we obtain
$$\pm 2am^2+2x_0m \mp 2bn^2-\varepsilon bn = k\left(\frac{b}{f^2} - \frac{2}{f}\right),$$
where $k,f\in \mathbb{Q}(i)$ and $f\neq 0$. Comparing the leading coefficients on both sides of this equation leads to  
\begin{equation}\label{papir1}\mp 2n^2- \varepsilon n-\frac{k}{f^2} = 0, \ \ \pm 2am^2 + 2x_0m + \frac{2k}{f}=0.\end{equation} 
From the first equation we get $k = f^2(\mp 2n^2 - \varepsilon n)$. Substituting this into the second equation and dividing by $\pm 2$ yields:
\begin{equation}\label{f-mm}
2n^2f \pm \varepsilon nf = am^2 \pm x_0m.
\end{equation}
If $\deg(v_1) = \gamma$ and $\deg(w_1) = \frac{3\gamma}{2}$, from $\deg(v_{2m}) = \deg(w_{2n})$ we obtain $m = 2n$. Substituting this relation into \eqref{f-mm} and dividing by $n$ yields equations for $f^2$ dependent only on $n$. After algebraic simplification, we obtain $n\not\in\mathbb{N}$ or $f^2 = \frac{\pm 2n-1}{2}$, which is not a square of $f\in\mathbb{Q}(i)\backslash\{0\}$ for $n\in\mathbb{N}$.
\\
If $\deg(v_1) = \gamma$ and $\deg(w_1) = \frac{\gamma}{2}$, from $\deg(v_{2m}) = \deg(w_{2n})$, we obtain $m = 2n-1$. After introducing $a=\frac{1}{2}(f-\frac{1}{f})$ and $x_0=\frac{1}{2}(f+\frac{1}{f})$ into (\ref{f-mm}) and simplifying, we get expressions for $f^2$ such as $\frac{(2n-1)(1-n)}{2n}$, $\frac{n(1-2n)}{4n-1}$ or $-n$.  If $f^2=\frac{(2n-1)(1-n)}{2n}$, then $f =-v^2$, where $v\in \mathbb{Q}$. Since factors $2n-1$, $n-1$, and $2n$ are coprime (depending on the parity of $n$), this fraction must be the ratio of two squares. 
For instance, if \(n=2k+1\), then this condition implies that both \(k\) and \(4k+1\) must be perfect squares. Setting $k = p^2$ and $4k+1 = q^2$ results in $(q-2p)(q+2p) = 1$. But, $p=0$ yields $n=1$, which contradicts $n \ge 3$. In the second case, setting $-v^2 = f^2$ gives $v^2 = \frac{n(2n-1)}{4n-1}$, where $v\in \mathbb{Q}$. Since $\gcd(n(2n-1), 4n-1)=1$, $4n-1$ has to be a perfect square. However, no perfect square can be of the form $4k-1$. Therefore, no rational $v$ can exist for any $n \in \mathbb{N}$. Let $f^2 = -n.$ Then
$\frac{1}{f} = -\frac{f}{n}$ so $a = \frac{f(n + 1)}{2n}$. Further, $c = \frac{s^2 - 1}{a}= \frac{2n(s^2 - 1)}{f(n + 1)}$. From $c = \frac{b - 2f}{f^2} = \frac{b - 2f}{-n}$, we obtain
$b = -cn + 2f= -\frac{2n^2(s^2 - 1)}{f(n +1)} + 2f.$ We compute 
$ab + 1 = -n s^2 - 1$, so $(r-fs)(r+fs)=-1,$ which is not possible for non-constant polynomials $r$ and $s$.\\

\textbf{Case 1.c)} $v_{2m}=w_{2n}$, $z_{0}=z_{1}=\pm (cr \pm st)$ and $\alpha>0$, $\alpha+\beta\leq\gamma\leq 2\alpha+\beta$.

In this case, we have $z_0 = z_1 = \pm w$ and consequently $x_0 = u = at \pm rs$ and $y_1 = v = bs \pm rt$. If we assume that $\beta = \gamma$, based on the inequalities between the degrees in this case, we obtain $\alpha = 0$, which contradicts the condition $\alpha > 0$.\\
Let $\alpha = \beta < \gamma$. We consider the relation $\deg(v_{2m}) = \deg(w_{2n})$ and include all four possibilities between degrees. Similarly to \cite{glavni}, we conclude that this situation is not possible. 

Let $\alpha < \beta < \gamma$. The equation \begin{equation}\label{koeficijenti}2[am(\pm m+1)-bn(\pm n+1)]=\pm (a+b-d_-)(n-m).\end{equation}  is obtained, similarly to \cite{dif2}, from Lemma \ref{Lm8} by taking into account the expressions of $z_0, z_1, x_0, y_1$ that occur in this case and by using the congruences $s^2 \equiv t^2 \equiv 1 \pmod{c}$ and the congruence (\ref{lm-dodano}). Since $\deg(d_-) \leq \alpha$, we have two possibilities to deal with. First let $\deg(d_-) < \alpha$. Considering the leading coefficients in (\ref{koeficijenti}) we get $-2n(\pm n +1) = \pm(n-m)$, $2m(\pm m +1) = \pm (n-m)$, and $\pm (n-m) =0$. From these equations, we get $(m, n) = \{(0, 0), (1, 1)\}$ which is not considered since we need $2m, 2n \geq 3$. The comparison of coefficients for the case when $\deg(d_-) = \alpha$ leads to the conclusion that $(m, n) = (1, 1)$, as in \cite{glavni}, which we again exclude because $2m, 2n \geq 3$. Hence, no choice of degrees $\alpha, \beta, \gamma$ yields a solution other than $d = d_+$.

We now explain how Case 1.c) also covers Cases 2.c), 3.d) and 4.c). After a possible change
of signs and a shift of one of the indices, the intersections appearing in Cases
2.c), 3.d) and 4.c) become intersections of the same form as in Case 1.c). The degree conditions, however, are inherited from the original cases and have to be checked separately. In Case 2.c) we have $(z_0,z_1)=(\pm t,\pm cr\pm st),$
$\alpha=\beta,$ $\gamma=3\alpha.$ This setup has already been treated in Case 1.c). In Case 3.d) we have $(z_0,z_1)=(\pm cr\pm st,\pm s),$
$2\alpha+\beta\leq\gamma\leq\alpha+2\beta.$ Therefore, we have to consider the range $2\alpha+\beta<\gamma\leq\alpha+2\beta$. In that case,
$\deg(d_-)=\gamma-\alpha-\beta$ satisfies $\alpha<\deg(d_-)\leq\beta.$ Hence, the possibilities $\alpha<\deg(d_-)<\beta,$ $\deg(d_-)=\beta$ have to be dealt with. Finally, in Case 4.c) we have
$(z_0,z_1)=(\pm t,\pm s),$ $\gamma\geq\alpha+2\beta$. Thus, $\deg(d_-)=\gamma-\alpha-\beta\geq\beta.$ After shifting both sequences by Lemma \ref{preklapanja}, this again gives a
Case 1.c)-type intersection, now with $\deg(d_-)=\beta \ \text{or}\ 
        \deg(d_-)>\beta.$ Consequently, Case 1.c) has to be understood in the extended sense, with all
possible relative positions of \(\deg(d_-)\) with respect to \(\alpha\) and
\(\beta\), namely
\[
        \alpha<\deg(d_-)<\beta,\qquad
        \deg(d_-)=\beta,\qquad
        \deg(d_-)>\beta .
\] Similarly to \cite{glavni}, these possibilities imply only $m = n = 0$ or $m = n = 1$, which is not possible. \\

\textbf{Case 2.a)} $v_{2m+1}=w_{2n}$, $(z_0,z_1)=(\pm 1, \pm s)$ and $\gamma \geq 2\alpha + \beta$.

By (\ref{d0}), (\ref{d1}) and Remark \ref{predznaci}, we get $x_0=1$, $y_1=r$. By Lemma~\ref{Lm8}, since $(s,c)=1$, we conclude that $z_0=\pm 1$ and $z_1=\pm s$ have the same signs and then, by (\ref{lm-dodano}),
\begin{equation}\label{kon5}\pm 2am(m+1)+s(2m+1)\equiv \pm 2bn^2+\varepsilon(an+bn-d_-n)\ ({\rm mod}\ {c}),\end{equation}where $\varepsilon\in\{-1,1\}$ is an independent sign. We will split this part of the proof into two parts, $\beta<\gamma$ and $\beta=\gamma$. 

Let $\beta<\gamma$. Since we assume that $2n\geq 3$, by (\ref{kon5}), after multiplying by $\varepsilon$ we obtain \begin{equation}\label{jed5}d_-=\pm 2bn+a+b\mp 2a\frac{m(m+1)}{n}-\varepsilon s\frac{2m+1}{n}.\end{equation} Lemma~\ref{lemma-d_-} implies $d_-\neq 0$ and, by (\ref{jed5}), $\gamma\leq 3\alpha+2\beta.$ By Lemma \ref{lemma-d_-}, in this case $\deg(d_{-})\geq \alpha$. Since $\alpha<\gamma$, according to (\ref{jed5}),  $$\gamma-\alpha-\beta=\deg(d_-) \leq \max\left\{\ \beta, \ \frac{\alpha+\gamma}{2}\right\}.$$  If $\beta > \frac{\alpha + \gamma}{2}$, then $\deg(d_-) = \beta$ i.e. $\beta = \frac{\gamma-\alpha}{2}.$ After introducing $\frac{\gamma-\alpha}{2}>\frac{\alpha+\gamma}{2}$, we get $\alpha<0$, which is not possible. If $\beta < \frac{\alpha+\gamma}{2}$, then $\deg(d_-) = \frac{\alpha+\gamma}{2}$. By Lemma \ref{lemma-d_-}, $\beta = \frac{\gamma-3\alpha}{2}$ is obtained. We have $\alpha >0$, since otherwise we obtain a contradiction with the assumptions on \(\beta\). Since $\beta>0$, we easily conclude $\gamma>3\alpha$. In this case, $\deg(v_1) = \gamma$ and $\deg(w_1) = \frac{5\gamma-\alpha}{4}$. Lemma \ref{lema_stupnjevi} implies \begin{equation}\label{alpha-gamma}
\frac{2m}{3n-1} = \frac{\gamma-\alpha}{\gamma+\alpha}<1.    
\end{equation}If $\alpha=\beta$, then $\gamma=5\alpha$ and \eqref{alpha-gamma} implies $3m=3n-1$, which is not possible. Hence, $\alpha<\beta<\gamma.$ The congruence obtained from Lemma \ref{Lm8}, after dividing by $c$ becomes 
\begin{equation}\label{kon5-new}\pm 2asm(m+1)+2m+1\equiv  2(\pm bsn^2+trn)\ ({\rm mod}\ {c}).\end{equation}Since $\beta+\frac{\alpha+\gamma}{2}=\gamma-\alpha<\gamma$, then \eqref{kon5-new} forces the equation \begin{equation}\label{in}2trn=s(\pm 2m(m+1)a\mp 2bn^2)-2m-1.\end{equation}By comparing the leading coefficients on both sides of the equation \eqref{in}, we get $n=\mp n^2$, which is not possible. 

Let $\beta=\frac{\alpha+\gamma}{2}$. Assume first that $\deg(d_-) = \beta$. In this case $\alpha =0$ and $\gamma = 2\beta$. From $\deg(v_{2m+1}) = \deg(w_{2n})$, we get $2m\beta+\deg(v_1) = (2n-1)\frac{3\beta}{2}+\deg(w_1).$ Since $v_1 = \pm s + c$ and $w_1 = \pm st + cr$, we conclude $\deg(v_1) = \gamma = 2\beta$ and $\deg(w_1) \in \left\{\frac{5\beta}{2}, \frac{3\beta}{2}\right\}$. So, $2\beta m + 2\beta = (2n-1)\frac{3\beta}{2} + \frac{5\beta}{2}$ or $2\beta m + 2\beta = (2n-1)\frac{3\beta}{2} + \frac{3\beta}{2}$, leading to $2m - 3n = -1$ or $2m - 3n = -2$, respectively. In this case, we are dealing with two possibilities described in Lemma \ref{Lm-ubaceno_novo}, namely $d_- = a + b \pm 2r$ or $d_- = \frac{1}{f^2}b- \frac{2}{f}$. If $d_-=a + b \pm 2r$, by Lemma \ref{lemma-3.7-param}, $s=\pm(2a(b+r)+1)$,  so $\deg(s) = \beta$. From (\ref{jed5}), we obtain
\begin{equation}
\pm 2r = \pm 2bn \mp 2a\frac{m(m+1)}{n} +\varepsilon(2a(b+r)+1)\frac{2m+1}{n}.
\end{equation} By comparing the leading coefficients we conclude that $\pm n+\varepsilon a \frac{2m+1}{n}=0$ and then, by comparing coefficients of the remaining polynomials of degree $\frac{\beta}{2}$, we get $\pm 1 = \varepsilon a\frac{2m+1}{n}$. Therefore, $n=1$, which contradicts the fact that $2n\geq 3$. \\
Now, let $d_- = \frac{1}{f^2}b- \frac{2}{f}$. We introduce $d_-$ into (\ref{jed5}) and, by Lemma \ref{lemma-3.7-param}, obtain
\[
\left(\frac{1}{f^2} - 1 \mp 2 n\right) b = a \mp 2 a \frac{m(m+1)}{n} +\varepsilon \left( b\frac{1-f^2}{f^2} + \frac{f^2-3}{2f} \right) \left(\frac{2m+1}{n}\right)+ \frac{2}{f}.
\]
By comparing the corresponding coefficients and introducing $a=\frac{f^2-1}{2f},$
we obtain the system
\[
\begin{cases}
n(1-f^2\mp 2nf^2)
=
\varepsilon(1-f^2)(2m+1),\\[6pt]
(f^2-1)\bigl(n\mp 2m(m+1)\bigr)
+
\varepsilon(f^2-3)(2m+1)
+
4n
=0.
\end{cases}
\]
We now consider the cases $2m-3n=-1$ and $2m-3n=-2.$
In each case we express $m$ in terms of $n$, substitute this into the system obtained by comparing the corresponding coefficients, and then distinguish the possibilities $\varepsilon=\pm1$ and the two choices of signs (upper and lower signs).
We illustrate the computation for
$2m-3n=-1,\ \varepsilon=1.$
The system of the equations becomes
\[
\begin{cases}
n(1-f^2\mp 2nf^2)=3n(1-f^2),\\
(f^2-1)\left(n\mp \dfrac{9n^2-1}{2}\right)
+3n(f^2-3)+4n=0.
\end{cases}
\]
Taking the upper and lower signs separately, we obtain
$-3n(3n-1)(n-1)=0$ or $-3n(3n+1)(n+1)=0$, respectively.
The corresponding roots do not yield an admissible positive integer value of $n$: either $n=0$ or $n\notin\mathbb N$.
The remaining cases are analogous. For $2m-3n=-1$ and $\varepsilon=-1$, one obtains
$3n(3n^2+8n+13)=0$ or $-3n(3n^2-8n+13)=0.$
For $2m-3n=-2$ and $\varepsilon=1$, one obtains
$9n^4-18n^3+10n^2-6n+2=0$ or $9n^4+6n^3-2n^2-6n+2=0,$
while for $2m-3n=-2$ and $\varepsilon=-1$, one obtains
$9n^4+18n^3+34n^2-18n+2=0$ or $9n^4-30n^3+46n^2-18n+2=0.$
By the Rational Root Theorem, together with the preceding exclusions in the factored cases, none of these equations yields an admissible positive integer value of $n$. 

Assume now that $\beta=\frac{\alpha+\gamma}{2}$ and $\deg(d_-) = \beta - 2\alpha < \beta$, which implies $\alpha > 0$. Since $\gamma=2\beta-\alpha \geq 2\alpha+\beta$, we have $\beta \geq 3\alpha$, easily yielding $\alpha < \beta < \gamma$.\\
We introduce $z_0=\delta$ and $z_1=\delta s$, where $\delta\in\{-1,1\}$. In this case, $x_0 = 1$ and $y_1 = r$, so $v_1=\delta s+c$ and $w_1=\delta st+cr$. 
Since $\deg(s) = \frac{\alpha+\gamma}{2} = \beta$ and $\deg(t) = \frac{\beta+\gamma}{2} = \frac{3\beta-\alpha}{2}$, the initial values yield $\deg(v_1) = \gamma = 2\beta-\alpha$. 
We first determine which sign $\delta$ in $w_1$ corresponds to each possible value of $\deg (w_1)$. From (\ref{dminus}), we obtain
$$2r(st-\varepsilon cr) = 2rst-2\varepsilon cr^2 = \varepsilon(a+b+c+2abc-d_-) - 2\varepsilon c(ab+1) = \varepsilon(a+b-c-d_-).$$
The right-hand side has degree $\gamma$, and then $\deg(st-\varepsilon cr)=\frac{3\beta-3\alpha}{2}$. Polynomial $st-\varepsilon cr$ can't have maximal possible degree, which is $\frac{5\beta-\alpha}{2}>\gamma$. Also, $\pm w_1=\delta st+cr$ precisely when $\delta\varepsilon=-1$. We set $\eta=\delta\varepsilon\in\{-1,1\}$. Thus, $\deg(w_1)=\frac{5\beta-\alpha}{2}$ corresponds to $\eta=1$, and $\deg(w_1)=\frac{3\beta-3\alpha}{2}$ corresponds to $\eta=-1$.\\
Let $M=m(m+1)$ and $K=2m+1$. From \eqref{kon5} using \eqref{lm-dodano}, multiplying by $s$, and applying $s^2=ac+1$, we obtain 
\begin{align} 
2\delta aM+sK &\equiv 2\delta bn^2+\varepsilon n(a+b-d_-) \pmod{c}, \label{2a31} \\ 
2\delta asM+K &\equiv 2(\delta bsn^2+trn) \pmod{c}. \label{2a32} 
\end{align} 
Since all terms in \eqref{2a31} have degree less than $\gamma$, congruence \eqref{2a31} is an equality. Define $D=2\delta asM+K-2\delta bsn^2-2trn$. Then \eqref{2a32} implies $c\mid D$. Multiplying $D$ by $s$, and applying $s^2=ac+1$, \eqref{dminus}, and the equality obtained from \eqref{2a31}, yield $sD = c\Bigl(\delta(2a^2M-2abn^2)-\varepsilon n(1+2ab)\Bigr).$
Since $c\mid D$, we get $s\mid (\delta(2a^2M-2abn^2)-\varepsilon n(1+2ab))$. Multiplying by the unit $\delta$, we obtain $s\mid Q_\eta$, where 
\[ Q_\eta = 2a^2M-2abn^2-\eta n(1+2ab). \]
Furthermore, since $\pm$ signs in (\ref{jed5}) correspond precisely to our choice of $\eta$, by substituting $M = m(m+1)$ and multiplying by $n$, we obtain
\begin{equation*}
\varepsilon s (2m+1) = \eta 2bn^2 + bn + an - n d_- - \eta 2aM.
\end{equation*}
We conclude that $s \mid L_\eta$, where $L_\eta = \eta 2bn^2 + bn + an - n d_- - \eta 2aM.$

Therefore, $s \mid Q_\eta$ and $s \mid L_\eta$, and we now consider the two possibilities for $\eta$.\\
Firstly, suppose $\deg (w_1)=\frac{5\beta-\alpha}{2}$, which corresponds to $\eta=1$.
In this case, from $\deg(v_{2m+1}) = \deg(w_{2n})$, we obtain
$(2m - 3n + 1)\beta = (1 - n)\alpha.$
For $\eta=1$, we have $L_1=n(2n+1)b+an-nd_- -2aM$ and $Q_1=2a^2M-2abn(n+1)-n$. Since $s\mid L_1$ and $s\mid Q_1$, we conclude $s\mid \bigl((2n+1)Q_1+2a(n+1)L_1\bigr)$. A direct simplification gives
\[
\begin{aligned}
    (2n+1)Q_1+2a(n+1)L_1 &= 2a^2\bigl(n(n+1)-M\bigr) -2an(n+1)d_- - n(2n+1).
\end{aligned}
\]
Denote this right-hand side by $R_1$. Since $2\alpha<\beta$ and $\deg(ad_-)=\beta-\alpha<\beta$, $\deg(R_1)<\deg(s)=\beta$. Since $s\mid R_1$, we conclude $R_1=0$. Therefore, 
$$2a\Bigl(a\bigl(n(n+1)-m(m+1)\bigr)-n(n+1)d_-\Bigr)=n(2n+1).$$
The left-hand side is divisible by the non-constant polynomial $a$, while the right-hand side is a positive integer. Hence, this possibility cannot occur.\\
It remains to consider the case $\deg (w_1)=\frac{3\beta-3\alpha}{2}$. Then $\eta=-1$, and the degree comparison gives $(2m-3n+2)\beta=-n\alpha$. In this case, using that $s\mid L_{-1}$ and $s\mid Q_{-1}$, we form a suitable linear combination in which the terms containing $b$ cancel. This gives a polynomial $R_{-1}$ such that $s\mid R_{-1}$, while all terms of $R_{-1}$ have degree strictly lower than $\beta=\deg (s)$. This, again, implies that the non-constant polynomial $a$ divides a nonzero constant, so we get a contradiction. Consequently, this subcase is not possible.

For $\beta = \gamma$, $\alpha = 0$. According to Lemma \ref{lemma-d_-}, $d_- = 0$ or $d_- = a$, where $a \in \mathbb{Q}(i)\backslash\{0\}$. Let $d_- = 0$. By Remark \ref{Rem_dminus}, the congruence (\ref{kon5}) becomes
$$ \pm 2am(m+1)+(\pm a\pm r)(2m+1) \mp 2bn^2 - \varepsilon(an+bn) = k(a+b\pm 2r), \  k\in\mathbb Q(i).$$
Comparing the corresponding coefficients on both sides of the equation gives
$$ \pm 2m(m+1) \pm (2m+1) - \varepsilon n = k, \quad \mp 2n^2 - \varepsilon n = k, \quad \text{and} \quad \pm(2m+1) = \pm 2k. $$ The first two equalities show, in particular, that \(k\in\mathbb Z\). From equation $\pm 2m\pm 1 = \pm 2k$, we immediately reach a contradiction.  Therefore, this case cannot occur.

Similarly, for $d_- = a$, we obtain $2m+1 = 0$ which is not possible.\\ 

\textbf{Case 2.b)} $v_{2m+1} = w_{2n}, \ (z_0, z_1) = (\pm s, \pm 1), \ \alpha = 0, \ \beta \leq \gamma.$

By Remark \ref{constant}, we have $a = \frac{1}{2}\left(f-\frac{1}{f}\right)$ and $x_0 = \frac{1}{2}\left(f+\frac{1}{f}\right)$, with $f\in\mathbb{Q}(i)\setminus\{0\}$. From Lemma \ref{Lm8}, we obtain the congruence
\begin{equation}\label{2b1}
\pm a \pm 2am(m+1) + x_0(2m+1) \equiv \pm 2bn^2 + 2tn \pmod{c},
\end{equation}
and we also conclude that $z_0$ and $z_1$ have the same sign $\pm$, since otherwise we obtain $c\mid 2$, a contradiction. Since \(\alpha=0\), the left-hand side of \eqref{2b1} is a constant. 
We have $2n\geq 3$. If \(\beta<\gamma\), then the term \(2tn\) has degree
$\deg (t)=\frac{\beta+\gamma}{2},$ whereas \(\deg (b)=\beta<\frac{\beta+\gamma}{2}\). Hence, the right-hand side has degree \((\beta+\gamma)/2<\gamma\). Therefore, the congruence \eqref{2b1} is an equality, which is impossible since its
left-hand side is constant.

Let \(\beta=\gamma\). Since \(\alpha=0\), by
Lemma~\ref{lemma-d_-}, either \(d_-=0\) or \(d_-=a\).
\begin{enumerate}
\item Let \(d_-=0\), and \(\eta,\varepsilon\in\{\pm1\}\). According to Remark~\ref{Rem_dminus}, $c=a+b+2\eta r,\ t=\varepsilon(b+\eta r).$
Hence, congruence \eqref{2b1} becomes
$$\pm a \pm 2am(m+1)+x_0(2m+1)
\mp 2bn^2 -2\varepsilon(b+\eta r)n=k(a+b+2\eta r),$$
for some \(k\in\mathbb Q(i)\). Comparing the coefficients of \(b\) and \(r\), respectively, gives $\mp 2n^2-2\varepsilon n=k,  \ -2\varepsilon\eta n=2\eta k.$ 
Hence \(k=-\varepsilon n\), and consequently $n(\mp2n-\varepsilon)=0.$
Since \(2n\geq3\), that is not possible. 
\item Let $ d_-=a=\frac12\left(f-\frac1f\right), \ f\in\mathbb Q(i)\setminus\{0\},$ 
and, in the notation of Remark~\ref{remark5}, $c=\frac{b}{f^2}-\frac2f, \ t=1-\frac bf.$ Since \(z_0=\pm s\), we have $ v_1=c(x_0\pm a)\pm1.$
Moreover, $x_0=\frac12\left(f+\frac1f\right),$ and therefore $x_0+a=f,\ x_0-a=\frac1f.$ Thus \(x_0\pm a\neq0\), and $\deg(v_1)=\gamma.$
Since \(z_1=\pm1\) and \(y_1=1\), we get $w_1=\pm t+c=\left(\mp\frac{1}{f}+\frac1{f^2}\right)b
\pm 1-\frac2f.$ The leading term could cancel only if $\mp\frac{1}{f}+\frac1{f^2}=0,$
that is, only if \(f=\pm 1\). But then $a=\frac12\left(f-\frac1f\right)=0,$
which is impossible. Hence, $\deg(w_1)=\gamma.$ By Lemma~\ref{lema_stupnjevi}, and since
\(\alpha=0\), we obtain  $\deg(v_{2m+1})=(m+1)\gamma.$
On the other hand, since \(\beta=\gamma\), we obtain $\deg(w_{2n})=2n\gamma.$
The equality \(v_{2m+1}=w_{2n}\) implies \begin{equation}\label{2b}m+1=2n.\end{equation} 
 We have $b=f^2c+2f$ and $t=-fc-1$. So, $b\equiv 2f \pmod c,
\ t\equiv -1 \pmod c,$ and \eqref{2b1} is a congruence between constants modulo
the non-constant polynomial \(c\). Hence, it is an equality. We get
\begin{equation}\label{eq:constant-equality}
\pm a\bigl(1+2m(m+1)\bigr)+x_0(2m+1)
=
\pm 4fn^2-2n.
\end{equation} Substituting \eqref{2b} and expressions for $a$ and $x_0$, we distinguish two cases. Since \(2n\geq3\), i.e., \(n\geq2\), the equation \eqref{eq:constant-equality} with the upper signs yields \begin{equation}\label{f-2b}
f=\frac{(2n-1)^2}{2n}.
\end{equation} For the lower signs in \eqref{eq:constant-equality}, equation \[
(4n-1)f^2+2nf+4n^2=0
\] arises. The discriminant of this quadratic equation is $\Delta=-4n^2(16n-5).$ Since \(f\in\mathbb Q(i)\), we conclude that \(16n-5\) is a perfect square. But, $16n-5\equiv 11 \pmod {16},$
whereas a square modulo \(16\) can only be \(0,1,4,\) or \(9\). Hence, this is impossible. It remains to rule out \eqref{f-2b}.

We compare leading coefficients in the equality $v_{2m+1}=w_{2n}$. By comparing the leading coefficients in $s^2=ac+1$, we get $S^2=aC.$ In the remaining case we have the upper signs, so $v_1=c(x_0+a)+1=fc+1$,
and, since $t=-fc-1$, $w_1=t+c=(1-f)c-1.$ By \eqref{rekurzija_vm} and \eqref{rekurzija_wn}, the leading coefficients of $v_{2m+1}$ and $w_{2n}$ are  
\begin{eqnarray*}(2S)^{2m}fC\ \ \textup{and}\ \ (2(-fC))^{2n-1}(1-f)C,\end{eqnarray*} respectively. Since $m=2n-1$, the equality 
$v_{2m+1}=w_{2n}$ gives $2^{4n-2}a^{2n-1}fC^{2n} =
    2^{2n-1}f^{2n-1}(f-1)C^{2n}.$
After cancellation, we have $2^{2n-1}a^{2n-1}=f^{2n-2}(f-1).$ Substituting $a=\frac12\left(f-\frac1f\right)=\frac{f^2-1}{2f},$
we get $(f^2-1)^{2n-1}=f^{4n-3}(f-1).$ Since $f\neq1$, this becomes
\begin{equation}\label{eq:star}
    (f-1)^{2n-2}(f+1)^{2n-1}=f^{4n-3}.
\end{equation} Now put $p=2n-1.$
Then $p\geq3$ is odd, and \eqref{f-2b} gives $f=\frac{p^2}{p+1}.$ Substituting this into \eqref{eq:star} and clearing denominators, gives
\[
    (p^2-p-1)^{p-1}(p^2+p+1)^p=p^{4p-2}.
\] Since $p-1$ is even, reducing this congruence modulo $p$, we obtain $1\equiv 0 \pmod p$. This contradiction rules out the remaining case.
\end{enumerate}

\vskip 1em 

\textbf{Case 2.c)} $v_{2m+1} = w_{2n}, \ (z_0, z_1) = (\pm t, \pm cr \pm st)$, $\alpha = \beta, \ \gamma = 3\alpha$.\\
By Lemma \ref{preklapanja}, this case is reduced to the extended form of Case 1.c), as explained at the very end of Case 1.c).\\

\textbf{Case 3.a)} $v_{2m} = w_{2n+1}, \ (z_0, z_1) = (\pm t, \pm 1), \gamma \geq \alpha + 2\beta. $

In this case $x_0 = r$ and $y_1 = 1$. From the congruence obtained by Lemma \ref{Lm8}, we first conclude that the signs $\pm$ of $z_0$ and $z_1$ are equal. Otherwise $c\mid t$, which is not possible because of $bc+1=t^2$. Dividing that congruence by $c$ yields:
\begin{equation}\label{3a1}
\pm 2 atm^2 +2rsm \equiv \pm 2btn(n+1) + 2n+1 \pmod{c}.
\end{equation}
Multiplying by $t$ and applying $t^2 \equiv 1 \pmod{c}$ alongside (\ref{lm-dodano}) gives:
\begin{equation}\label{3a2}
    \pm 2am^2 +\varepsilon(a+b-d_-)m \equiv \pm 2bn(n+1)+t(2n+1) \pmod{c},
\end{equation}
where $\varepsilon\in\{-1,1\}$. Since $\beta<\gamma$, $\deg(d_-) = \gamma - \alpha - \beta < \gamma$, and $\deg(t) = \frac{\beta+\gamma}{2}<\gamma$, both sides of the congruence (\ref{3a2}) have degrees strictly less than $\gamma$. Thus, the congruence is an equality:
\begin{equation}\label{3a3}
\pm 2am^2 +\varepsilon(a+b-d_-)m = \pm 2bn(n+1)+t(2n+1).
\end{equation} Since $\deg (t)=\frac{\beta+\gamma}{2}>\beta$ and $2n+1\neq0$, the right-hand side of \eqref{3a3} has degree 
$\frac{\beta+\gamma}{2}$. Moreover, $m\neq0$ and
$\deg (d_-)=\gamma-\alpha-\beta\geq\beta$, since $\gamma\geq\alpha+2\beta$. If $\deg (d_-)=\beta$, then the left-hand side of \eqref{3a3} would
have degree $\leq\beta$, while the right-hand side has degree
$\frac{\beta+\gamma}{2}>\beta$, which is impossible. Hence,
$\deg (d_-)>\beta\geq\alpha.$
Therefore, $-\varepsilon m d_-$ is the unique term of maximal
degree on the left-hand side of \eqref{3a3}. It follows that
$\deg (d_-)=\deg (t).$ Consequently, $$\gamma=2\alpha+3\beta,\qquad
\deg (d_-)=\alpha+2\beta.$$

We now analyze the possible values for $\alpha$:
\begin{itemize}
\item If $\alpha \neq \beta$ and $\alpha \neq 0$: This implies $\alpha < \beta$, meaning $\deg(at) < \deg(bt) < \gamma$. The initial congruence (\ref{3a1}) becomes an equality. Comparing leading coefficients yields $n(n+1) = 0$, which contradicts our initial assumptions.
\item If $\alpha = \beta$, then $\gamma=5\alpha$ and
$\deg(d_-)=3\alpha.$ Furthermore,
$\deg(t)=\frac{\beta+\gamma}{2}=3\alpha<5\alpha=\gamma$. Therefore, since $w_1=\pm t+c$, $\deg(w_1)=5\alpha.$ By \eqref{degwn}, $\deg(w_{2n+1})
=2n\frac{\beta+\gamma}{2}+\deg(w_1)
=(6n+5)\alpha.$ On the other hand, $\deg(cr)=\deg(st)=6\alpha.$ For the noncancelling choice of $v_1=cr\pm st$, we therefore have
$\deg(v_1)=6\alpha,$ 
and hence, by \eqref{degvm}, $\deg(v_{2m})
=(2m-1)\frac{\alpha+\gamma}{2}+\deg(v_1)
=(6m+3)\alpha.$ The equality $v_{2m}=w_{2n+1}$ then implies $6(m-n)=2$,
which is impossible. For the cancelling choice of $v_1=cr\pm st$, we use the identity $(cr\pm st)^2=cd_-+1.$ Since
$\deg(cd_-)=\gamma+\deg(d_-)=8\alpha>0,$
we obtain $\deg(v_1)
=\deg(cr\pm st)
=\frac{\deg(cd_-)}{2}
=4\alpha.$
Thus $\deg(v_{2m})
=(2m-1)3\alpha+4\alpha
=(6m+1)\alpha.$
The equality $v_{2m}=w_{2n+1}$ now gives $6(m-n)=4$, which is again impossible. 

\item Suppose that $\alpha=0$. Then $\gamma=3\beta,\ \deg (d_-)=2\beta.$
Since $\deg (t)=2\beta<3\beta$, we have
$\deg (w_1)=3\beta$, and hence $\deg (w_{2n+1})=(4n+3)\beta$. If $v_1$ is the noncancelling combination of $cr$ and $st$, then
$\deg (v_1)=\frac{7\beta}{2}$, and therefore
$\deg (v_{2m})=(3m+2)\beta.$ Thus $3m=4n+1$. If $v_1$ is the cancelling combination, then
$v_1^2=cd_-+1,$
so $\deg (v_1)=\frac{5\beta}{2}$, and consequently $\deg (v_{2m})=(3m+1)\beta,$ which gives $3m=4n+2$. By applying Lemma~\ref{lemma-3.7-param} to the triple $\{a,b,d_-\}$, there is an 
$\eta\in\{\pm1\}$ such that $d_-=4r(r+\eta a)(b+\eta r).$
Since $r^2=ab+1$ and $a$ is a nonzero constant, $D_-=4AB^2$. Put
\[
   u=2r^2+2\eta ar-1,\qquad
   v=2br+\eta(2r^2-1).
\]
Then $u^2=ad_-+1$ and $v^2=bd_-+1$. Since $c$ is the large
regular extension of $\{a,b,d_-\}$, a square root of $bc+1$ is,
up to sign, the noncancelling combination of $t=bu\pm rv.$ It follows that $T
   =\pm 4AB^2
   =\pm D_-.$ 
In \eqref{3a3}, the unique terms of degree $2\beta$ are
$-\varepsilon m d_-$ on the left and $(2n+1)t$ on the right. Comparing their leading coefficients gives $-\varepsilon mD_-=(2n+1)T$ and leads to $m=2n+1.$
Combining this with either $3m=4n+1$ or $3m=4n+2$ gives,
respectively, $2n=-2$ or $2n=-1$, both impossible.
\end{itemize}

\vskip 0.25cm

\textbf{Case 3.b)} \(v_{2m}=w_{2n+1}\), \((z_0,z_1)=(\pm s,\pm1)\),
\(\alpha=0\), \(\beta=\gamma\).

From \(cd_0+1=z_0^2=s^2=ac+1\), we get
\(d_0=a\). Thus \(x_0^2=a^2+1\), and \(x_0\in\mathbb Q(i)\). Since
\(z_1=\pm1\), we take \(y_1=1\). Lemma~\ref{initial1} gives \(z_0\equiv tz_1\pmod c\), hence
\(\pm s\equiv\pm t\pmod c\). Since $\beta=\gamma$, by Lemma \ref{lemma-d_-}, $d_-=0$ or $d_-=a$. If \(d_-=a\), by Remark \ref{remark5},
\(c=\frac{b}{f^2}-\frac2f\) and \(t=1-\frac bf=-fc-1\), so
\(t\equiv-1\pmod c\). Therefore \(c\mid(s\pm1)\), which is impossible since
\(\deg(s\pm1)=\frac{\gamma}{2}<\gamma\). Hence, \(d_-=0\). 
Thus \(c=a+b+2\eta r\), \(s=\rho(a+\eta r)\), and
\(t=\kappa(b+\eta r)\), where \(\eta,\rho,\kappa\in\{\pm1\}\). Write
\(z_0=\lambda s\) and \(z_1=\mu\), with
\(\lambda,\mu\in\{\pm1\}\), and put \(U=a+\eta r\). Since
\(c=a+b+2\eta r\), we have \(b+\eta r=c-U\). Hence,
\(s=\rho U\) and \(t=\kappa(c-U)\).  Therefore,
\[
\begin{aligned}
z_0-tz_1
=\lambda s-\mu t
=(\lambda\rho+\mu\kappa)U-\mu\kappa c.
\end{aligned}
\]
Since \(\lambda,\mu,\rho,\kappa\in\{\pm1\}\), we have
\(\lambda\rho+\mu\kappa\in\{-2,0,2\}\). As \(c\mid(z_0-tz_1)\), it follows that
\(c\mid(\lambda\rho+\mu\kappa)U\).  If \(\lambda\rho+\mu\kappa\neq0\), then \(c\mid U\).
But $\deg U=\frac{\beta}{2}<\beta=\gamma,$
which is impossible. Therefore \(\lambda\rho+\mu\kappa=0\), and hence
\(z_0-tz_1=-\mu\kappa c=\pm c\). Applying Lemma~\ref{Lm8} to \(v_{2m}=w_{2n+1}\), and dividing by \(c\), we obtain 
\[
        \pm1+2az_0m^2+2sx_0m
        \equiv
        2btz_1n(n+1)+2n+1
        \pmod c.
\]
Multiplying by \(s\), and using \(s^2\equiv1\pmod c\) and
\(st\equiv\pm1\pmod c\), this becomes
\[
        (\omega-(2n+1))s+q-2\theta bn(n+1)\equiv0\pmod c,
\]
where \(\omega,\theta\in\{\pm1\}\) and $q =2\lambda am^2+2x_0m\in \mathbb{Q}(i)$. Since \(s=\rho(a+\eta r)\) and \(c=a+b+2\eta r\), there exists
\(k\in\mathbb Q(i)\) such that
\[
        (\omega-(2n+1))\rho(a+\eta r)+q-2\theta bn(n+1)
        =
        k(a+b+2\eta r).
\] Comparing first the terms of degree \(\beta\), and then the terms of degree
\(\frac{\beta}{2}\), gives $k=-2\theta n(n+1),\ 2k=\rho(\omega-(2n+1)).$
Hence, $-4\theta n(n+1)=\rho(\omega-(2n+1)).$
If \(\omega=1\), then \(2\theta(n+1)=\rho\), impossible. If \(\omega=-1\),
then \(2\theta n=\rho\), again impossible. Therefore, Case 3.b) gives no
admissible irregular extension.\\

\textbf{Case 3.c)} \(v_{2m}=w_{2n+1}\), \((z_0,z_1)=(\pm1,\pm1)\),
\(\alpha=0\), \(\beta=\gamma\).

From \(cd_0+1=z_0^2=1\) and \(cd_1+1=z_1^2=1\), we get \(d_0=d_1=0\).
Hence, we may take \(x_0=y_1=1\). Lemma~\ref{initial1} gives 
\(\pm1\equiv\pm t\pmod c\). Since $\deg t=\frac{\beta+\gamma}{2}=\gamma$, we have \(t=kc\pm1\), for some
\(k\in\mathbb Q(i)\). Squaring this identity and using \(t^2=bc+1\), we get
\(bc+1=k^2c^2\pm2kc+1\), and hence
\begin{equation}\label{3c1}
        b=k^2c\pm2k.
\end{equation} 

Since $\beta=\gamma$, by Lemma \ref{lemma-d_-}, $d_-=0$ or $d_-=a$. If \(d_-=0\), then \(c=a+b\pm2r\). Since \(\deg (r)=\frac{\beta}{2}<\beta\),
the polynomials \(b\) and \(c\) have the same leading coefficient. Comparing the leading
coefficients in \((\ref{3c1})\), we obtain \(k^2=1\). Thus \(k=\pm1\), and
consequently \(b=c\pm2\). Combining this with \(c=a+b\pm2r\), we get that
\(2r\) is constant, which is impossible since \(\deg (r)=\frac{\beta}{2}>0\).

If \(d_-=a\), then
\(c=\frac{b}{f^2}-\frac2f\) and \(t=1-\frac bf=-fc-1\). Since \(z_0=\pm1\),
we have \(v_1=\pm s+c\), and therefore \(\deg(v_1)=\gamma\). Also,
\(w_1=\pm t+c\). The leading terms of \(\pm t\) and \(c\) could cancel only
if \(f=\pm1\), but then \(a=\frac12(f-\frac1f)=0\), impossible. Hence
\(\deg(w_1)=\gamma\). By Lemma~\ref{lema_stupnjevi}, we get
\(\deg(v_{2m})=\frac{(2m+1)\gamma}{2}\) and
\(\deg(w_{2n+1})=(2n+1)\gamma\). The equality \(v_{2m}=w_{2n+1}\) would imply
\(2m+1=2(2n+1)\), which is impossible. Therefore Case 3.c) gives no admissible irregular extension.\\

Case \textbf{3.d)} $v_{2m} = w_{2n+1}, \ (z_0, z_1) = (\pm cr \pm st, \pm s), \ \alpha \geq 0, \ 2\alpha+\beta \leq \gamma \leq \alpha+2 \beta$

Lemma \ref{preklapanja} reduces this case to the extended form of Case 1.c), as explained at the end of Case 1.c).\\

\textbf{Case 4.a)} $v_{2m+1} = w_{2n+1}$, $(z_0, z_1) = (\pm 1, \pm cr\pm st)$, $\gamma \leq 2\alpha + \beta$.

By Lemma~\ref{preklapanja}, the equality $v_{2m+1}=w_{2n+1}$, $(z_0,z_1)=(\pm1,\pm cr\pm st)$ can be rewritten, up to admissible changes of  signs, in the form $v_{\pm1,2m+1}=w_{\pm s,2N},$
where \(N=n\) or \(N=n+1\). In this case \(\gamma \leq 2\alpha+\beta\), and therefore, if \(d_-\neq0\), then
\[
\deg(d_-)=\gamma-\alpha-\beta\leq \alpha.
\]

Although the shifted equality has the form \(v_{\pm1,2m+1}=w_{\pm s,2N}\), the initial value \(\pm s\) need not be the chosen fundamental value \(z_1\). Therefore we cannot apply the bound \((\ref{ineq20})\) to it. Instead, we apply the congruence formula of Lemma~\ref{Lm8} directly to the shifted equality.

The possibility $\gamma=2\alpha+\beta$ is already examined in Case 2.a), so we may assume  \begin{equation}\label{gamma-4.a}\gamma<2\alpha+\beta.\end{equation} If $\alpha=0$, \eqref{gamma-4.a} yields $\gamma<\beta$, a contradiction. Hence, $\alpha>0$. Applying Lemma~\ref{Lm8} to the shifted equality, and multiplying it by $s$, using \(s^2\equiv1\pmod c\), gives
\begin{equation}\label{4a1}
\pm 2am(m+1)+s(2m+1)
\equiv
\pm 2bN^2+\varepsilon(a+b-d_-)N
\pmod c,
\end{equation}
where \(\varepsilon\in\{\pm1\}\). 

We now assume that $\beta<\gamma.$ Since \(N\neq0\), from \eqref{4a1} we obtain
\begin{equation}\label{jed5-new}
d_-=
\pm 2bN+a+b
\mp 2a\frac{m(m+1)}{N}
-\varepsilon s\frac{2m+1}{N}.
\end{equation}

If $\frac{\alpha+\gamma}{2}>\beta,$ then the dominant term in \eqref{jed5-new} is the term containing \(s\). Hence its coefficient must vanish, that is $2m+1=0,$
which is impossible.

Now suppose that $\frac{\alpha+\gamma}{2}<\beta.$
If \(\alpha=\beta\), then this would imply $\frac{\beta+\gamma}{2}<\beta,$
and hence \(\gamma<\beta\), a contradiction. Therefore \(\alpha<\beta\). In this case the dominant term in \eqref{jed5-new} is the term containing \(b\).
Comparison of leading coefficients gives $\pm 2N+1=0,$
which is impossible.

Consequently, the only remaining possibility is $\alpha<\beta=\frac{\alpha+\gamma}{2}.$
Therefore
\begin{equation}\label{4a-deg}
\gamma=2\beta-\alpha.
\end{equation}
Together with \(\gamma\le2\alpha+\beta\), this gives $\beta\le3\alpha.$
We now distinguish two subcases. Assume first that \(2\alpha<\beta\). If \(d_-\neq0\), then, by
\eqref{4a-deg}, $\deg(d_-)=\gamma-\alpha-\beta=\beta-2\alpha.$ Hence, $\deg(ad_-)=\alpha+\deg(d_-)=\beta-\alpha<\beta=\deg(s).$
If \(d_-=0\), the term containing \(d_-\) disappears, so the same conclusion is trivial.

Therefore, after the same simplification as in Case \textbf{2.a)}, with the index \(n\) replaced by \(N\), the divisibility condition by \(s\) gives $s\mid R_{\pm1},$
where $\deg(R_{\pm1})<\deg(s).$ Consequently, $R_{\pm1}=0.$ As in Case \textbf{2.a)}, this gives an equality in which the non-constant
polynomial \(a\) divides a nonzero constant, which is impossible. It remains to consider \(2\alpha\ge\beta\). If \(d_-\neq0\), then, by
\eqref{4a-deg}, $\deg(d_-)=\gamma-\alpha-\beta=\beta-2\alpha\le0.$
Since \(d_-\neq0\), this forces $\deg(d_-)=0,$
and therefore $\beta=2\alpha.$
If \(d_-=0\), the term containing \(d_-\) disappears and the following argument
is even simpler. Denote $M=m(m+1)$. We analyze the divisibility conditions $s \mid R_1$ and \(s\mid R_{-1}\), obtained from \eqref{4a1}, where 
$$ R_1=2a^2(N(N+1)-M)-2aN(N+1)d_--N(2N+1) $$ and $$R_{-1}=2a^2\bigl(N(N-1)-M\bigr)-2aN(N-1)d_--N(2N-1).$$ A direct calculation gives
$(2N-1)R_1-(2N+1)R_{-1}=4a^2(N^2+M)-4aN^2d_-.$ Hence,
$s\mid 4a\bigl(a(N^2+M)-N^2d_-\bigr).$ Since \(s^2=ac+1\), we have \(\gcd(a,s)=1\). Therefore,
\[
s\mid a(N^2+M)-N^2d_-.
\]  Now $\deg(s)=\beta$ and $\deg\bigl(a(N^2+M)\bigr)=\alpha<\beta$. If \(d_-=0\), then clearly $\deg\bigl(a(N^2+M)-N^2d_-\bigr)<\deg(s)$, which is not possible. If \(d_-\neq0\), then $\deg(d_-)=\beta-2\alpha<\beta$,
so again $\deg\bigl(a(N^2+M)-N^2d_-\bigr)<\deg(s).$ Since \(s\) divides this polynomial of smaller degree, it must be zero, so: 
\[
a(N^2+M)=N^2d_-.
\] This is impossible because \(a\neq0\) and \(N^2+M\neq0\). If \(d_-\neq0\), then \(d_-\) is a constant, so the right-hand side is constant, whereas the left-hand side is divisible by the non-constant polynomial \(a\). This is again impossible.

Let $\beta=\gamma$. We have \(\alpha>0\). If \(d_-\neq0\), then $\deg(d_-)=\gamma-\alpha-\beta=-\alpha<0,$ which is impossible. Hence, $d_-=0.$ Thus, in this case, $s\mid R_1 \ \text{and}\ s\mid R_{-1},$
where, with \(M=m(m+1)\),
\[
R_1=
2a^2\bigl(N(N+1)-M\bigr)-N(2N+1)
\]
and
\[
R_{-1}=
2a^2\bigl(N(N-1)-M\bigr)-N(2N-1).
\]
A direct calculation gives
\[
(2N-1)R_1-(2N+1)R_{-1}
=
4a^2(N^2+M).
\]
It follows that $s\mid 4a^2(N^2+M),$ hence
$s\mid N^2+M.$ Obviously, \(N^2+M\) is a nonzero constant, while
$\deg(s)=\frac{\alpha+\gamma}{2}>0.$
This is impossible. Therefore, the case \(\beta=\gamma\) gives no solution.\\

\textbf{Case 4.b)} $v_{2m+1} = w_{2n+1}$, $(z_0, z_1) = (\pm cr\pm st, \pm 1)$ and $\gamma \leq \alpha + 2\beta$.

We obtain an intersection of the same formal type as in Case 3.a). By Lemma \(\ref{preklapanja}\), after a possible change of signs and a shift
of the index in the sequence \((v)_m\), the equality $v_{\pm cr\pm st,2m+1}=w_{\pm1,2n+1}$ can be rewritten in the form $v_{\pm t,2M}=w_{\pm1,2n+1},$
where \(M=m\) or \(M=m+1\). However, the degree conditions overlap only when $\gamma=\alpha+2\beta.$ Hence, it remains to consider $\gamma<\alpha+2\beta.$
 
Let $\beta<\gamma$. Applying the congruence (\ref{3a2}) obtained in Case 3.a), we get
\begin{equation}\label{3a21}
\pm 2aM^2+\varepsilon(a+b-d_-)M\equiv\pm 2bn(n+1)+t(2n+1)\pmod c,
\end{equation}
where \(\varepsilon\in\{\pm1\}\). We have \(\deg (t)=\frac{\beta+\gamma}{2}<\gamma\).
On the other hand, the left-hand side of \((\ref{3a21})\) has degree at most
\(\beta\). Indeed, \(\alpha\leq\beta\) and, if \(d_-\neq0\), then
Lemma~\(\ref{lemma-d_-}\) gives
\(\deg(d_-)=\gamma-\alpha-\beta<\beta\), because
\(\gamma<\alpha+2\beta\). If \(d_-=0\), the same bound is immediate.
Consequently, the congruence
\((\ref{3a21})\) is an equality. Moreover, the coefficient of degree \(\frac{\beta+\gamma}{2}>\beta\) vanishes. Since the leading coefficient of \(t\) is non-zero, this gives
\(2n+1=0\), a contradiction.

Let $\beta = \gamma$. In this remaining case, we return to the original equality
\(v_{2m+1}=w_{2n+1}\), so the shifted index \(M\) will no longer be used.
First we show that the present case necessarily splits into the two special
subcases. Put \(w=cr\pm st\). We have \(cd_-+1=z_0^2\). Since $\beta=\gamma$, Lemma
\(\ref{lemma-d_-}\) implies that either \(d_-=0\), or \(d_-=a\) and
\(\alpha=0\). If \(d_-=0\), then \(w^2=1\), and hence
\((z_0,z_1)=(\pm1,\pm1)\). If \(d_-=a\), then \(w^2=ca+1=s^2\), and hence
\((z_0,z_1)=(\pm s,\pm1)\), with \(\alpha=0\). We apply Lemma~\ref{Lm8} directly to the original equality \(v_{2m+1}=w_{2n+1}\).

First assume that \((z_0,z_1)=(\pm1,\pm1)\). Then \(x_0=y_1=1\), and
\(d_-=0\). By Remark~\(\ref{Rem_dminus}\), we have \(c=a+b\pm2r\),
\(s=\pm(a\pm r)\), \(t=\pm(b\pm r)\), with compatible signs. Hence, $c=\pm s\pm t$. By
Lemma~\(\ref{Lm8}\), after dividing by $c$, we get
\begin{equation}\label{4b1}
\pm1\equiv\pm2btn(n+1)\mp2asm(m+1)+(2n+1)-(2m+1)\pmod c.
\end{equation}

If \(\alpha=\beta=\gamma\), then $\deg s=\deg t=\gamma.$ Moreover, since \((z_0,z_1)=(\pm1,\pm1)\), we may take \(x_0=y_1=1\). Hence, $v_1=c\pm s$, $w_1=c\pm t$, so $\deg v_1\leq\gamma$, $\deg w_1\leq\gamma$. By \eqref{degvm} and \eqref{degwn}, $\deg v_{2m+1}=2m\gamma+\deg v_1$ and $\deg w_{2n+1}=2n\gamma+\deg w_1$. Since \(v_{2m+1}=w_{2n+1}\), we get
\[
        2(m-n)\gamma=\deg w_1-\deg v_1.
\]
The right-hand side lies between \(-\gamma\) and \(\gamma\), while the
left-hand side is a multiple of \(2\gamma\). Therefore, it must be zero.
Since \(\gamma>0\), it follows that \(m=n\).
Substituting \(m=n\) into \((\ref{4b1})\), we get
\begin{equation}\label{4.b-con}
        \pm1 \equiv 2n(n+1)(\mu bt-\lambda as)\pmod c,
\end{equation}
where \(z_0=\lambda\) and \(z_1=\mu\), with
\(\lambda,\mu\in\{\pm1\}\). From Lemma \ref{initial1}, we have $\lambda s\equiv \mu t\pmod c$ so $\lambda as\equiv \mu at\pmod c$. Therefore,
\[
        \mu bt-\lambda as
        \equiv
        \mu(b-a)t
        \pmod c.
\]
Using \(c=a+b+2\eta r\) and \(t=\kappa(b+\eta r)\), with
\(\eta,\kappa\in\{\pm1\}\), put
\(U=a+\eta r\). Then \(b+\eta r=c-U\), so
\(t\equiv-\kappa U\pmod c\), while $b-a\equiv -2U\pmod c.$ Consequently,
$\mu(b-a)t
        \equiv
        2\mu\kappa U^2
        \pmod c.$
Since \(U^2=(a+\eta r)^2=ac+1\), we have \(U^2\equiv1\pmod c\). Thus
\[
        \mu bt-\lambda as\equiv 2\mu\kappa\pmod c,
\]
and hence \(\mu bt-\lambda as\equiv\pm2\pmod c\). Therefore, \eqref{4.b-con} implies
\[
        \pm1\equiv \pm4n(n+1)\pmod c.
\]
Since both sides are constant and \(\gamma>0\), this congruence is an
equality, which has no solution \(n\in\mathbb N_0\). Hence, this case is
impossible.

Let \(\alpha<\beta=\gamma\). Multiplying \((\ref{4b1})\) by \(s\) and using
\(s^2\equiv1\pmod c\) and \(st\equiv\pm1\pmod c\), we obtain a congruence of
the form
\[
\pm s\pm2bn(n+1)\pm2am(m+1)+2s(n-m)\equiv0\pmod c.
\]
Thus, for some \(k\in\mathbb Q(i)\),
\begin{equation}\label{4b2}
    \pm s\pm2bn(n+1)\pm2am(m+1)+2s(n-m)=k(a+b\pm2r).
\end{equation}
Comparing the coefficients of \(b\) and \(r\), respectively, gives
\(k=\pm2n(n+1)\) and \(\pm1+2(n-m)=\pm2k\). In particular, \(k\neq 0\) is an even integer. The second equation is not possible, because of the different parity on the left-hand and right-hand side. 

Now let \((z_0,z_1)=(\pm s,\pm1)\). Then \(\alpha=0\) and
\(d_-=a\). By Remark~\(\ref{remark5}\), it suffices to consider $a=\frac12\left(f-\frac1f\right),\ 
b=f(1-t),\ 
c=-\frac1f(t+1),\ 
r=fs,$
where \(f\in\mathbb Q(i)\setminus\{0\}\). In this case, $u=\frac12\left(f+\frac1f\right).$ Then $t=-fc-1,\ cr+st=-s.$
We set \(z_1=\tau\), where \(\tau\in\{\pm1\}\). By Lemma~\ref{initial1}, $sz_0\equiv tz_1\pmod c.$ Since \(t\equiv-1\pmod c\) and \(s^2\equiv1\pmod c\), it follows that $z_0 \equiv -\tau s\pmod{c}.$ 
Since \(z_0=\pm s\) the choice \(z_0=\tau s\) would imply that $c\mid 2s.$ This is impossible, since $ac+1=s^2.$ Therefore, $z_0=-\tau s.$ By Remark \ref{constant}, $x_0=u$. By Remark \ref{predznaci}, in order to simplify further observations, we take $x_0=\tau u$. 
Moreover, \(t+1=-fc\), and hence \(s^2+t=ac+1+t=(a-f)c=-uc\), while \(t\equiv-1\pmod c\) and \(b=f(1-t)\) imply \(bt\equiv-2f\pmod c\). Since $y_1=1$, by Lemma~\(\ref{Lm8}\), we get
\[
2\tau u(m+1)-2\tau am(m+1)
+4\tau fn(n+1)-(2n+1)\equiv0\pmod c
\] after division by \(c\).
Since the left-hand side is constant and \(\gamma >0\), this is an
equality. Using the expressions for \(a\) and \(u\) and multiplying by \(\tau f\), we obtain
\begin{equation}\label{4b3}
 \bigl((2n+1)^2-m^2\bigr)f^2
 -\tau(2n+1)f+(m+1)^2=0.
\end{equation}
Furthermore, $v_1=\tau\left(\frac{c}{f}-1\right), \  w_1=-\tau+(1-\tau f)c.$ Hence, \(\deg (v_1)=\gamma\). Moreover, \(1-\tau f\neq0\), since otherwise \(f=\tau=\pm1\), which would imply \(a=0\). Hence \(\deg (w_1)=\gamma\), as well. By \eqref{degvm} and \eqref{degwn}, 
 we obtain \(m=2n\). Substituting this into \((\ref{4b3})\), we get $(2m+1)f^2-\tau(m+1)f+(m+1)^2=0.$ The discriminant of the quadratic equation is
$-(m+1)^2(8m+3)$,
so \(8m+3\) is a square in \(\mathbb Q\), and hence in \(\mathbb Z\). This is impossible since $8m+3\equiv3\pmod8.$\\

\textbf{Case 4.c)} $v_{2m+1}=w_{2n+1}$, $(z_0,z_1)=(\pm t, \pm s)$ and $\gamma\geq \alpha+2\beta$.\\
By Lemma \ref{preklapanja}, this case is reduced to the extended form of Case
1.c). The corresponding degree conditions are checked at the end of Case 1.c).
\end{proof}

\end{document}